\def \R  {{\mathbb R}}
\def \Z  {{\mathbb Z}}
\def \C  {{\mathbb C}}
\def \N  {{\mathbb N}}
\def \CP {{{\mathbb C}{\mathbb P}}}
\def \CP {{\mathbb C}{\mathbb P}}
\def \t  {{\mathfrak t}}
\def \bfZ {{\mathbf Z}}
\def \calE {{\mathcal E}}
\def \calJ {{\mathcal J}}
\newcommand{\algt}{\ensuremath{\mathfrak{t}}}
\def    \eps    {\epsilon}
\def    \half   {{\frac{1}{2}}}
\def    \ol {\overline}
\def \tN {{\widetilde{N}}}
\DeclareMathOperator \defect {defect}
\DeclareMathOperator \GW {GW}
\newcommand{\acts}{\mathbin{\raisebox{-.5pt}{\reflectbox{\begin{sideways}$\circlearrowleft$\end{sideways}}}}}
\numberwithin{figure}{section}
\numberwithin{equation}{section}
\let\c@equation\c@figure
\let\c@table\c@figure
\let\c@algorithm\c@figure
\newtheorem{Lemma}[equation]{Lemma}
\newtheorem{Theorem}[equation]{Theorem}
\newtheorem*{thm*}{Theorem}
\newtheorem{Question*}{Question}
\newtheorem{Proposition}[equation]{Proposition}
\newtheorem{Corollary}[equation]{Corollary}
\newtheorem*{Lemma*}{Lemma}
\newtheorem*{Corollary*}{Corollary}
\theoremstyle{definition}
\newtheorem{Notation}[equation]{Notation}
\newtheorem{Facts}[equation]{Facts}
\newtheorem{Definition}[equation]{Definition}
\newtheorem{Remark}[equation]{Remark}
\newtheorem{noTitle}[equation]{}
\newcommand{\labell}[1] {\label{#1} }% \printname{#1}}
\newcommand{\mute}[1] {}
\begin{document}

\title[Hamiltonian circle actions]
{Circle actions on symplectic four-manifolds}
%\\
%with an Appendix by Tair Pnini}

\author{Tara S. Holm}
\address{Dept.\ of Mathematics, Cornell University, Ithaca, NY  14853-4201, USA}
\email{tsh@math.cornell.edu}

\author{Liat Kessler}
\address{Dept.\ of Mathematics, Physics, and Computer Science, University of Haifa,
at Oranim, Tivon 36006, Israel}
\email{lkessler@math.haifa.ac.il}

\subjclass[2010]{Primary: 53D35; Secondary: 53D20, 53D45, 57R17, 57S15}

\maketitle

\begin{center}
with an Appendix by {\sc Tair Pnini}
\end{center}

\begin{abstract}
We complete the classification of Hamiltonian torus and circle actions on 
symplectic four-dimensional manifolds.  
Following work of Delzant and Karshon,  Hamiltonian circle and $2$-torus actions 
on any fixed simply connected symplectic four-manifold
were characterized by Karshon, Kessler and Pinsonnault.
What remains is to study the case of Hamiltonian actions on blowups of $S^2$-bundles 
over a Riemann surface of positive genus.
These do not admit $2$-torus actions.  In this paper, 
we characterize Hamiltonian circle actions on them. 
We then derive combinatorial results on the existence and counting of these actions.
As a by-product, 
we provide an algorithm that determines the $g$-reduced form of a blowup form.
Our work is a combination of ``soft" equivariant 
and combinatorial techniques, using the momentum map and related data, 
with ``hard" holomorphic techniques, including Gromov-Witten invariants.
\end{abstract}

%%%%%%%%%%%%
\section{Introduction}
%%%%%%%%%%%%

In this paper we complete the classification of Hamiltonian torus and circle actions on symplectic
four-dimensional manifolds.   It is in this dimension that 
there are abundant examples (indeed, Gompf has shown that any 
finitely presented group may be the fundamental group of a four-dimensional
symplectic manifold \cite{Gompf}), and yet powerful holomorphic techniques ($J$-holomorphic curves and Gromov-Witten
invariants) are developed enough to make classification problems tractable.

A symplectic action of a $T^n=(S^1)^n$ on a symplectic manifold $(M,\omega)$ is  {\bf Hamiltonian} if it admits a {\bf moment map}: 
an equivariant smooth map $\Phi \colon M \to {\t}^{*}$ such that the components $\Phi^{\xi}=\langle \Phi,\xi \rangle$ satisfy 
Hamilton's equation
$d \Phi^{\xi}=-\iota(\xi_{M})\omega,$
for all $\xi \in \t$. Here $\xi_M$ is the vector field that generates the action on $M$ of the one-parameter 
subgroup $\{\exp(s \xi) \, |\, s \in \R\}$ of $T$.
The actions we consider are effective and the manifolds are compact and connected.
By the Convexity Theorem 
\cite{At:convexity,GS:convexity},
the image of the momentum map $\Delta =\Phi(M)\subset \algt^*\cong \R^n$ 
is a convex polytope. We say that two Hamiltonian $T$ actions are {\bf equivalent} if they differ by an equivariant 
symplectomorphism and a reparametrization of $T$. 
For an effective 
Hamiltonian action $T^n\acts M$,  
we are forced to have
$
\dim(T)\leq \frac{1}{2}\dim(M).
$
Thus, if $M$ is four-dimensional, the only tori that can act effectively are $S^1$ 
and $T^2$.

Delzant \cite{delzant} has classified symplectic manifolds equipped with a Hamiltonian action of a torus of half the dimension
in terms of their momentum polytopes,
up to equivariant symplectomorphism. Building on work of Audin \cite{audin} and Ahara and Hattori \cite{ahara-hattori},
Karshon \cite{karshon} has classified  symplectic four-manifolds with Hamiltonian $S^1$
actions in terms of decorated graphs.
In particular, the only four-manifolds that admit Hamiltonian $S^1$ actions are 
blowups of $S^2$-bundles over Riemann surfaces and $\CP^2$. \cite[\S\! 6]{karshon}.

Delzant's Theorem and Karshon's 
Theorem leave open questions about
inequivalent actions on the same manifold.  Explicitly, given a compact connected four-dimensional symplectic manifold $(M,\omega)$, what Hamiltonian actions does it admit? 
The difficulty lies in determining exactly which Hamiltonian spaces $S^1\acts (M,\omega)$ have symplectomorphic underlying 
manifolds $(M,\omega)$.
The question of characterizing Hamiltonian $2$-torus and circle actions on simply connected 
symplectic four-manifolds  was answered by Karshon, Kessler and Pinsonnault in \cite{algann,algorithm}.

To complete the characterization of Hamiltonian actions on symplectic four manifolds, it
remains to study the case of Hamiltonian  actions on blowups of $S^2$-bundles over a Riemann surface of positive genus.
These do not admit Hamiltonian $2$-torus actions.  In what follows, we characterize Hamiltonian circle actions on them,
up to (possibly non-equivariant) symplectomorphism (Theorems~\ref{conjcount} and \ref{thmexu}).  
We prove that a reduced symplectic blowup of an irrational ruled symplectic  four-manifold is compatible with all the Hamiltonian circle actions: 
there are no ``exotic actions". 

We give an algorithm in pseudo-code in Appendix~\ref{tair}, written by Tair Pnini,
to count the (inequivalent) Hamiltonian circle actions on blowups of irrational ruled symplectic manifolds.
We use the algorithm to derive results on the existence and number of such actions in Section~\ref{counting}.
Along the way, we also provide an algorithm that determines the reduced form 
corresponding to a symplectic form
on a blowup of a ruled symplectic manifold
(Algorithm~\ref{alg:cremona}).
The proofs combine ``soft" combinatorial techniques and ``hard" holomorphic techniques.

\vskip 0.2in

\noindent {\bf Acknowledgements.} 
Tair Pnini is a student at the University of Haifa at Oranim and she implemented the counting
algorithms for us.  We are indebted to her for her work.
We also thank Daniel Cristofaro-Gardiner, 
Yael Karshon, Dusa McDuff and Martin Pinsonnault for helpful discussions.  We are grateful to the
anonymous referee for remarks which improved the paper.

Holm was
supported in part by  the Simons Foundation under
Grant \#208975  and the National Science Foundation under Grants DMS--1206466 and DMS--1128155.
Any opinions, findings, and conclusions or recommendations expressed in this material are those of the authors and do
not necessarily reflect the views of the National Science Foundation or Simons Foundation.

%%%%%%%%%%%%
\section{Statement of Main Results}\label{se:main results}
%%%%%%%%%%%%

\noindent We now turn to the full statement of our main results.  We begin by setting our notation.

\begin{Notation}\label{basic notions}
Let $(\Sigma,j)$ be a compact connected  Riemann surface endowed with a complex structure $j$.  Up to diffeomorphism, there are two $S^2$-bundles over $\Sigma$. We fix a smooth structure on the trivial bundle 
$\Sigma \times S^2$ and on the non-trivial bundle $M_{\Sigma}$.
We equip $\Sigma \times S^2$ and $M_{\Sigma}$ with a complex structure such that each fiber is a holomorphic sphere. 
We fix basepoints $\ast\in S^2$ and $\ast\in \Sigma$.
For the trivial $S^2$-bundle $\Sigma \times S^2$ over $\Sigma$, we denote 
$F:=[\ast \times S^2], \,\,\, B:=[\Sigma \times \ast],$
 classes in the homology group  $H_2(\Sigma \times S^2;\Z)$. 
When we consider the non-trivial $S^2$-bundle $M_{\Sigma}\stackrel{\pi}{\to}\Sigma$, denote the homology class of the fiber by 
$F=[\pi^{-1}(\ast)]\in H_2(M_\Sigma;\Z)$.
For each $\ell$, the trivial bundle admits a section ${\sigma}_{2\ell}: \Sigma\to  \Sigma \times S^2$ 
whose image $\sigma_{2\ell}(\Sigma)$ has 
even self intersection number $2\ell$. Similarly, for each $\ell$, 
the non-trivial bundle admits a section $\sigma_{2\ell+1}:\Sigma\to M_{\Sigma}$ 
whose image $\sigma_{2\ell+1}(\Sigma)$ has odd self intersection number $2\ell+1$.  
We denote $B_n:=[\sigma_n(\Sigma)] \in H_2$ over $\Z$.
For every $n \in \Z$, we have the classes $B_{n}=B_{-n}+nF.$

For a non-negative integer $k$, denote by $(\Sigma \times S^2)_{k}$ the manifold obtained from the trivial $S^2$-bundle over $\Sigma$ by $k$ complex blowups at $k$ distinct points, and by $(M_{\Sigma})_{k}$ the manifold obtained from the non-trivial $S^2$-bundle over $\Sigma$ by $k$ complex blowups at $k$ distinct points.
Let $E_1, \ldots, E_k$
denote the homology classes of the exceptional divisors. We fix the labeling of the exceptional divisors.

Let $M=(\Sigma \times S^2)_{k}$ or $M=(M_{\Sigma})_{k}$.
We say that a vector $(\lambda_F,\lambda_B;\delta_1,\ldots,\delta_k)$ in $\R^{2+k}$
{\bf encodes} a degree $2$ cohomology class $\Omega \in H^2(M;\R)$
if 
\begin{itemize}
\item $\frac{1}{2\pi} \left< \Omega , F \right> = \lambda_F$;  
\item $\frac{1}{2\pi} \left< \Omega , E_j \right> = \delta_j$
for $j= 1, \ldots, k$; and 
\item Either \begin{itemize}
\item[$\circ$]  $\frac{1}{2\pi} \left< \Omega , B \right> = \lambda_B$ when  $M=(\Sigma \times S^2)_{k}$; or
\item[$\circ$] $\frac{1}{2\pi} \left(\left< \Omega , B_{-1} \right>+\frac{1}{2} \left<\Omega,F \right>\right)=\lambda_B$ when  $M=(M_{\Sigma})_{k}$. 
\end{itemize}
\end{itemize}

For $k \geq 2$, let $M=(\Sigma \times S^2)_{k}$ or $M=(M_{\Sigma})_{k}$.
we say that a cohomology class $\Omega \in H^2(M;\R)$ encoded by a vector  $(\lambda_F,\lambda_B;\delta_1,\ldots,\delta_k)$
is {\bf in $g$-reduced form} or is {\bf $g$-reduced}
if 
\begin{equation}\label{eq:red5}
 \delta_1 \geq \ldots \geq \delta_k\, , \quad  
   \delta_1 + \delta_2 \leq \lambda_F\, ,
   \end{equation}
   and, if $g(\Sigma)=0$ and $M=(\Sigma \times S^2)_{k}$,
   \begin{equation} \label{eq:red6}
\lambda_F \leq \lambda_B \,,
   \end{equation}
   and, if $g(\Sigma)=0$ and $M=(M_{\Sigma})_{k}$,
\begin{equation}\label{eq:red7} 
\bigg(\half \lambda_F+\delta_1\bigg)  \leq \lambda_B.
\end{equation}

\end{Notation}

\begin{Remark}
We can realize  $(M_{S^2})_{k}$ as a complex blowup of the complex projective plane $\CP^2$,
the class $B_{-1}$ with the class $E$ of the
exceptional divisor, and the class $B_{1}$ with the class $L$ of a line $\CP^1$ in $\CP^2$.
In this notation, $F=L-E$.
A cohomology class on $(M_{S^2})_{k}$
encoded by $(\lambda_F,\lambda_B;\delta_1,\ldots,\delta_k)$  may  be identified 
with a cohomology class on a $(k+1)$-fold blowup of $\CP^2$ encoded by 
$(\lambda; \delta,\delta_1,\ldots,\delta_k)$, where
\begin{equation} \labell{eqlamdel}
\lambda_F=\lambda-\delta \mbox{ and } \lambda_B=\half\bigg(\lambda+\delta\bigg).
\end{equation}
In this case, our definition of {\bf $0$-reduced} coincides with the 
notion of {\bf reduced} given in \cite{blowups, algann, algorithm}.  
That is, a class $\Omega \in H^2((M_{S^2})_{k};\R)$ is in $0$-reduced form 
if and only if the vector 
$(\lambda_F,\lambda_B;\delta_1,\ldots,\delta_k)$ encoding it determines, by \eqref{eqlamdel}, a vector 
$(\lambda;\delta,\delta_1,\dots,\delta_k)$  which satisfies
$\delta \geq \delta_1 \geq \ldots \geq \delta_k$ and $\delta+\delta_1+\delta_2 \leq \lambda$. 
In the positive genus case, our notion of $g$-reduced does not coincide with
the notion of reduced given in \cite{Li-Li02, Li-Liu}.  An explanation of the difference
is in Remark~\ref{variation of reduced}.
\end{Remark}

\begin{Definition}\label{def:blowup}
We assume $g(\Sigma)>0$.
A {\bf blowup form} on $(\Sigma \times S^2)_{k}$ or $(M_{\Sigma})_{k}$ is 
a symplectic form for which there exist disjoint 
embedded symplectic spheres (oriented by the symplectic form)  in the 
homology classes $F,E_1, \ldots, E_k$.
\end{Definition}

\begin{Remark} \label{rem:blowup}
Blowing down symplectically along $k$ disjoint $\omega$-symplectic embedded spheres in $E_1,\ldots,E_k$ in  $M=(\Sigma \times S^2)_{k}$, or $M=(M_{\Sigma})_{k}$, with a blowup form $\omega$ yields a symplectic manifold $(M',\omega')$ with an $\omega'$-symplectic embedded sphere $C$ in a simple homology class $F$ with self intersection zero.
Then,  by \cite[Proposition 4.1]{structure}, there is a smooth $S^2$-fibration of the obtained manifold $M'$ over a compact $2$-manifold $\Sigma'$ which is compatible with $\omega'$ (i.e., is nondegenerate on each fiber), and has a fiber equal to $C$.
By homology considerations, $\Sigma'=\Sigma$, and the obtained manifold is a trivial $S^2$-bundle over $\Sigma$ if $M=(\Sigma \times S^2)_{k}$ and a nontrivial $S^2$-bundle over $\Sigma$ if $M=({M_{\Sigma}})_{k}$.  
Moreover, by \cite[\S 6]{ML}, the two quantities
$[\omega']^2$ and $\langle [\omega'],F \rangle$ uniquely determine the symplectic
form on $M'$, up to isotopy; see \cite[Example 3.6]{salamon}. 
\end{Remark}

\begin{Remark}
In Definition \ref{def:blowup} we have restricted to case when $\Sigma$ has positive genus.  When $\Sigma = S^2$, 
Karshon and Kessler
\cite{blowups} have defined a {\bf blowup form} on $(M_{S^2})_{k}$:
it is a symplectic form for which there exist pairwise disjoint 
embedded symplectic spheres in $L, \, E, \, E_1,\ldots,E_k$.
\end{Remark}

\begin{Definition}\labell{defequiv}
We say that  two symplectic forms $\omega_1$ and $\omega_2$ on $M$ are {\bf equivalent} if there exists a diffeomorphism $f$ of $M$ that acts trivially on the homology $H_{2}(M)$ and such that $f^{*}\omega_{2}$ can be connected to $\omega_{1}$ through a continuous path of symplectic forms.
\end{Definition}

The following lemma follows from the work of Gromov \cite{gromovcurves}, McDuff \cite{structure}, Lalonde-McDuff \cite[Theorem 2.4]{ML} and McDuff-Salamon \cite[Proposition 7.21]{intro}.
We give the proof in the positive genus case in \cite{HK:circle};
 the proof in the genus $0$ case is in \cite{algorithm}.

\begin{Lemma} \labell{lemequiv}
The set of blowup forms on $M$ is an equivalence class of symplectic forms.
\end{Lemma}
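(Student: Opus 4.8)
The plan is to establish two facts about the set $\mathcal{B}$ of blowup forms on $M$: that $\mathcal{B}$ is closed under the equivalence relation of Definition~\ref{defequiv}, and that any two forms in $\mathcal{B}$ are equivalent. Since $\mathcal{B}$ is visibly nonempty (it contains a K\"ahler form obtained by blowing up a standard ruled K\"ahler form), these two facts give $\mathcal{B} = [\omega]$ for each $\omega \in \mathcal{B}$: closure yields $[\omega] \subseteq \mathcal{B}$, and pairwise equivalence yields $\mathcal{B} \subseteq [\omega]$. Toward closure, I would first note that being a blowup form is preserved under pullback by a diffeomorphism $f$ acting trivially on $H_2(M)$, since such an $f$ fixes the classes $F, E_1, \ldots, E_k$ and sends symplectic spheres to symplectic spheres, hence carries a blowup configuration for $\omega$ to one for $f^*\omega$. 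It therefore remains to show that if $\omega_0 \in \mathcal{B}$ and $\{\omega_t\}_{t \in [0,1]}$ is a path of symplectic forms, then $\omega_1 \in \mathcal{B}$.

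For this deformation step I would use $J$-holomorphic curve techniques following Gromov~\cite{gromovcurves} and McDuff~\cite{structure}. The relevant classes satisfy $F \cdot F = 0$, $F \cdot E_j = 0$ and $E_i \cdot E_j = -\delta_{ij}$, and by the adjunction formula any embedded symplectic representative is a sphere; they are precisely the fiber and exceptional classes of a blown-up ruled surface. For a generic $\omega_t$-compatible $J_t$ each of these classes carries an embedded $J_t$-holomorphic sphere---the fiber class by the ruledness results of~\cite{structure}, the $E_j$ by the theory of exceptional spheres---and positivity of intersections together with the intersection numbers above forces the representatives to be pairwise disjoint. Showing that the set of $t$ admitting such a configuration is open and closed, via Gromov compactness and the control of bubbling in \cite[Proposition 7.21]{intro}, then yields $\omega_1 \in \mathcal{B}$.

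For pairwise equivalence, take $\omega_0, \omega_1 \in \mathcal{B}$. Blowing each down along its disjoint spheres in $E_1, \ldots, E_k$ yields, as in Remark~\ref{rem:blowup}, symplectic forms $\bar\omega_0, \bar\omega_1$ on the underlying $S^2$-bundle $\bar M$ over $\Sigma$, each carrying an embedded symplectic sphere in the fiber class. By the classification of ruled symplectic four-manifolds of Lalonde--McDuff~\cite[Theorem 2.4]{ML} invoked in Remark~\ref{rem:blowup}, any two symplectic forms on $\bar M$ are deformation equivalent after a homologically trivial diffeomorphism, and I would use this to connect $\bar\omega_0$ to $\bar\omega_1$ through ruled symplectic forms. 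I would then lift the resulting path by performing the symplectic blow-up in continuously varying balls of continuously varying positive capacities at $k$ distinct points, producing a path of blowup forms on $M$ whose endpoints agree with $\omega_0$ and $\omega_1$ up to a diffeomorphism, and finally invoke the connectedness and uniqueness of symplectic ball embeddings to arrange that this residual diffeomorphism is isotopic to the identity, hence trivial on $H_2(M)$. This exhibits $\omega_0 \sim \omega_1$.

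I expect the main obstacle to be twofold. In the closure step one must understand the Gromov limits of the fiber and exceptional classes well enough to keep all $k+1$ spheres simultaneously embedded and disjoint along the entire path, rather than merely represented by possibly reducible cycles. In the equivalence step the delicate point is homological bookkeeping: one must ensure that every diffeomorphism produced---both from the ruled-surface classification and from comparing different blow-ups---can be taken to act trivially on $H_2(M)$, which is exactly where the uniqueness of symplectic ball embeddings and the fixed labeling of the exceptional classes $E_j$ enter.
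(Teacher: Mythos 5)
Your proposal is correct and follows essentially the same route as the paper, which does not spell the proof out but defers it to \cite{HK:circle} and \cite{algorithm} while citing exactly the ingredients you use: $J$-holomorphic curve techniques of Gromov and McDuff plus deformation invariance of Gromov--Witten invariants for the closure step, the Lalonde--McDuff classification of ruled symplectic four-manifolds for the blown-down forms, and the uniqueness/isotopy results for symplectic blowups (\cite[Proposition 7.21]{intro}) to lift the path and control the residual diffeomorphism on $H_2(M)$. The two-part decomposition (closure of the set of blowup forms under the equivalence relation, then pairwise equivalence via blow-down, connect, and re-blow-up) matches the intended argument.
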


The set
$\calE(M)$
of exceptional classes in $H_2(M)$ is the same for all the blowup forms, as discussed in Lemma~\ref{calE and J}. 
An {\bf exceptional sphere} in a symplectic four-manifold $(M,\omega)$ is an embedded 
$\omega$-symplectic sphere of self intersection $-1$. 
A homology class $E \in H_2(M)$ is {\bf exceptional} if it is represented by an exceptional sphere.
For blowups of $S^2$-bundles over a Riemann surface $\Sigma$ with a positive genus, there is  an explicit identification of the classes in $\calE$, by 
a holomorphic argument of 
Biran \cite[Corollary 5.C]{biran:packing}. 
\begin{equation}\labell{Biran}
\calE(M)=\{E_1,\ldots,E_k, F-E_1,\ldots,F-E_k\}.
\end{equation}

Pinsonnault \cite[Proposition 3.13]{pinso} showed that every Hamiltonian circle action 
on a $k$-fold blowup of a ruled symplectic four-manifold can be obtained from an action 
on some ruled symplectic manifold $M_0$ by a sequence of $k$ equivariant blowups of some sizes. 
For blowups of irrational ruled symplectic manifolds, we use the identification of $\calE(M)$ in 
order to derive, from the same holomorphic tools used in the non-constructive result of 
Pinsonnault, a constructive characterization, in which we identify the ruled manifold $M_0$ 
and the sizes of the equivariant blowups.

\begin{Theorem} \labell{conjcount}
Let $k\geq 1$ be an integer, and
let  $M=(\Sigma \times S^2)_{k}$ {\color{red}$($respectively $M=(M_{\Sigma})_{k})$}.
Assume that the genus of $\Sigma$ is positive.
Let $\omega$ be a blowup form on $M$ whose cohomology class is encoded by a vector 
$(\lambda_F,\lambda_B;\delta_1,\ldots,\delta_k)$; if $k \geq 2$, assume that  $[\omega]$ is in 
$g$-reduced form.
Then every Hamiltonian circle action is equivalent to 
\begin{itemize}
\item[\textcircled{a}] one that is obtained from a Hamiltonian circle action on 
$\Sigma \times S^2$  {\color{red}$($resp.\ $M_{\Sigma})$} with the symplectic 
form whose cohomology class is encoded by $(\lambda_F,\lambda_B)$ by a 
sequence of $S^1$-equivariant symplectic blowups of sizes
$\delta_1$,$\delta_2$, $\delta_3$, $\ldots$, $\delta_k$, in this order,
\end{itemize} and to 
\begin{itemize}
\item[\textcircled{b}] one that is obtained from a Hamiltonian circle action on  
$M_{\Sigma}$ {\color{red}$($resp.\ $\Sigma \times S^2)$}  with the symplectic form 
whose cohomology class is encoded by $(\lambda_F,  \lambda_B+\half \lambda_F-\delta_1)$  
by a sequence of $S^1$-equivariant symplectic blowups of sizes
 $\lambda_F-\delta_1$, $\delta_2$, $\ldots$, $\delta_k$, in this order.
\end{itemize}
\end{Theorem}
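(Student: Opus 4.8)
The plan is to realise every Hamiltonian circle action on $(M,\omega)$ as an iterated $S^1$-equivariant blowdown, refining Pinsonnault's non-constructive equivariant blowdown \cite[Proposition 3.13]{pinso} by using the explicit list $\calE(M)=\{E_1,\dots,E_k,F-E_1,\dots,F-E_k\}$ from \eqref{Biran} to control which exceptional class is contracted at each stage. First I would fix an $S^1$-invariant $\omega$-compatible almost complex structure $J$ and record the invariance principle that is the soft consequence of the hard theory: if $E$ is an exceptional class of minimal $\omega$-area, then by McDuff's structure theorem it is represented by an embedded $J$-holomorphic sphere (no bubbling can occur, as there is no class of strictly smaller area to split off), and this representative is unique, since two distinct irreducible $J$-spheres $C,C'$ in the same class would satisfy $C\cdot C'\ge 0$ by positivity of intersections, contradicting $C\cdot C'=E\cdot E=-1$. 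Uniqueness forces the sphere to be $S^1$-invariant, because the action preserves $J$ and hence permutes the $J$-spheres in each homology class; thus it can be contracted equivariantly.

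Next I would identify the minimal class using the encoding. By \eqref{Biran} the areas of the exceptional classes are $\delta_1,\dots,\delta_k$ and $\lambda_F-\delta_1,\dots,\lambda_F-\delta_k$. The $g$-reduced inequalities \eqref{eq:red5} give $\delta_k\le\delta_i$ for all $i$ and, since $\delta_k\le\delta_2\le\lambda_F-\delta_1\le\lambda_F-\delta_i$, also $\delta_k\le\lambda_F-\delta_i$; hence $E_k$ has minimal area. Contracting its invariant representative produces a Hamiltonian $S^1$-space on $(\Sigma\times S^2)_{k-1}$ (resp.\ $(M_\Sigma)_{k-1}$). Because $E_k$ is disjoint from $F$ and from $E_1,\dots,E_{k-1}$, the embedded symplectic spheres in those classes survive, so by Definition~\ref{def:blowup} the contracted form is again a blowup form, encoded by $(\lambda_F,\lambda_B;\delta_1,\dots,\delta_{k-1})$, and \eqref{eq:red5} persists. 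Iterating the contraction of the current smallest class, I peel off $E_k,E_{k-1},\dots,E_2$ and arrive at a Hamiltonian $S^1$-space on the one-point blowup $(\Sigma\times S^2)_1$ (resp.\ $(M_\Sigma)_1$) with class encoded by $(\lambda_F,\lambda_B;\delta_1)$.

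The fork producing the two presentations occurs at this last stage. On a one-point blowup of a ruled surface the fibre through the blown-up centre is reducible, splitting as an $S^1$-invariant union of two embedded $-1$-spheres in the homologically distinct classes $E_1$ and $F-E_1$; being distinct in homology, each component is individually $S^1$-invariant. Contracting the $E_1$-component lands on $\Sigma\times S^2$ (resp.\ $M_\Sigma$) with class encoded by $(\lambda_F,\lambda_B)$, and reversing the chain of equivariant blowups recovers presentation~\textcircled{a}, with sizes $\delta_1,\dots,\delta_k$ in that order. Contracting instead the $(F-E_1)$-component of area $\lambda_F-\delta_1$ lands on the other bundle; to read off its encoding I would use the change of basis $F'=F$, $E_1'=F-E_1$, $B_{-1}'=B-E_1$, which one checks realises the nontrivial (resp.\ trivial) bundle structure. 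It yields $\lambda_F'=\lambda_F$, first blowup size $\tfrac{1}{2\pi}\langle\omega,F-E_1\rangle=\lambda_F-\delta_1$, and $\lambda_B'=\tfrac{1}{2\pi}\bigl(\langle\omega,B-E_1\rangle+\half\langle\omega,F\rangle\bigr)=\lambda_B+\half\lambda_F-\delta_1$, so reversing the blowups recovers presentation~\textcircled{b}, with sizes $\lambda_F-\delta_1,\delta_2,\dots,\delta_k$ in that order. The respective case $M=(M_\Sigma)_k$ is entirely symmetric, with the roles of the two bundles and of the section classes $B$ and $B_{-1}$ interchanged via the analogous basis change.

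The main obstacle is the invariance principle underlying every contraction: one must guarantee that at each stage the current minimal exceptional class carries an embedded, $S^1$-invariant symplectic representative, and that contracting it returns a blowup form, so that \eqref{Biran} and the $g$-reduced inequalities remain available for the next step. The uniqueness-from-negative-self-intersection argument secures invariance for the strictly minimal class, but the delicate point is the fork at $k=1$, where the \emph{non-minimal} class $F-E_1$ must also carry an invariant representative; this is exactly what the reducible-fibre description supplies, and it is what lets a single action split into two bundle presentations. Throughout, equivalences are taken up to (possibly non-equivariant) symplectomorphism, so Lemma~\ref{lemequiv} ensures that the contracted forms are the standard blowup forms on the respective $S^2$-bundles and that reassembling the equivariant blowups reproduces $(M,\omega)$ in both stated ways.
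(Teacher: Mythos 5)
Your proposal follows essentially the same route as the paper: blow down an invariant representative of the minimal exceptional class $E_k$ (minimality coming from \eqref{Biran} and the $g$-reduced inequalities, invariance from positivity of intersections together with $E_k\cdot E_k=-1$ and triviality of the action on $H_2$), iterate down to $k=1$, and fork there by contracting either $E_1$ or $F-E_1$. The one place you are thinner than the paper is the existence of an embedded invariant $J$-holomorphic sphere in the non-minimal class $F-E_1$ for the given invariant $J$, which you assert via the reducible-fibre picture; this is exactly the paper's Lemma~\ref{martin}, proved in Appendix~\ref{appendix1} by an adjunction-inequality case analysis, and it does require proof.
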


\noindent Moreover, Theorem~\ref{conjcount} holds if we replace 
``Hamiltonian circle action'' with ``action of a compact Lie group $G$ that preserves 
$\omega$ and induces the identity morphism on $H_2(M)$.'' 
To complete the characterization 
we  provide a description of 
the Hamiltonian circle actions on $\Sigma \times S^2$ and on $M_{\Sigma}$ in Proposition~\ref{zerocase}. 

\begin{proof}[Proof of Theorem \ref{conjcount}.]
Consider an action on $(M,\omega)$ of a compact Lie group $G$ 
that preserves $\omega$ and induces  the identity morphism on $H_2(M)$.  
Let $J_G$ be a $G$-invariant $\omega$-tamed almost complex structure on $M$.
The structure $J_G$ is the almost complex structure associated by the polar decomposition to the invariant Riemann metric $g$ obtained by averaging some Riemann metric $g'$ along the action of $G$ with respect to the Haar measure: $g(u,v):=\int_{G}g'({\sigma_{a}}_{*}u,{\sigma_{a}}_{*}v)d a$.

 First assume $k \geq 2$.
 By the identification  \eqref{Biran} of $\calE(M)$ and the $g$-reduced assumption, the class $E_k$ is of minimal symplectic area in $\calE(M)$. 
 Hence, by Lemma \cite[Lemma 1.2]{pinso}, $E_k$ is represented by an embedded $J_G$-holomorphic sphere $C$.
To see that $C$ is $G$-invariant, apply the positivity of  
intersections  \cite[Proposition 2.4.4]{nsmall}, the fact that $E \cdot E=-1$, and the 
assumption that the action is trivial on $H_2(M)$ to get that for every $a \in G$, the 
$J_G$-holomorphic sphere $aC$ equals the $J_G$-holomorphic sphere $C$.

 Equivariantly blowing down along this sphere yields a $G$-action on $(\Sigma \times S^2)_{k-1}$ (or $(M_{\Sigma})_{k-1}$) with the blow up form whose cohomology class is encoded by $({\lambda_F,\lambda_B;\delta_1,\ldots,\delta_{k-1}})$ and is in $g$-reduced form, as assured by Lemma \ref{blowdown Ek} (on blow downs). 

We now let $k \geq 1$. By $k-1$ repeated  applications of this blow down, we reduce  the theorem to the following claim:
 There exist embedded $G$-invariant symplectic spheres $C$ in $E_1$ and $D$ in 
 $F-E_{1}$ in $(\Sigma \times S^2)_{1}$  (resp.\ $(M_{\Sigma})_{1}$), with the blowup 
 form encoded by $(\lambda_F,\lambda_B;\delta_1)$, and blowing down equivariantly 
 along $C$ yields a $G$-action on $\Sigma \times S^2$ (resp.\ $M_{\Sigma}$) with the 
 symplectic form encoded by $({\lambda_F,\lambda_B})$, and blowing down along $D$ 
 yields a $G$-action on $M_{\Sigma}$ (resp.\ $\Sigma \times S^2$) with the symplectic 
 form encoded by $({\lambda_F,\lambda_B+\half \lambda_F-\delta_1})$.
 This claim follows from the existence of embedded $J_G$-holomorphic spheres in 
 $E_1$ and in $F-E_1$, stated in Lemma \ref{martin}, which is the positive genus version 
 of the genus zero \cite[Lemma~2.2]{pinso2}, and from Lemma \ref{blowdown Ek}  and 
 Corollary \ref{corbdowne12 prime} (on blow downs).
\end{proof}

Our second main result provides a unique representation for a blowup form on the $k$-fold
blowup of a ruled symplectic four-manifold $\Sigma \times S^2$ or $M_{\Sigma}$.

\begin{Theorem} \labell{thmexu}
Let $k \geq 2$ be an integer, and let  $M=(\Sigma \times S^2)_{k}$ or $M=(M_{\Sigma})_{k}$. 
Assume that the genus $g(\Sigma)$ is positive. 
Given a blowup form $\omega$ on $M$, there exists a unique 
blowup form $\omega'$ whose cohomology class is in $g$-reduced 
form such that $(M, \omega) \cong (M,\omega')$.
\end{Theorem}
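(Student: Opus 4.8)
The plan is to translate the symplectic classification into a statement about orbits of the encoding vector under a finite group of homological automorphisms, and then to recognize the $g$-reduced inequalities \eqref{eq:red5} as a fundamental domain for that group. Fix $M=(\Sigma\times S^2)_k$ (the case $M=(M_\Sigma)_k$ being symmetric) with its basis $B,F,E_1,\dots,E_k$, and let $\Gamma\le\Aut(H_2(M;\Z))$ be the group of automorphisms induced by orientation-preserving diffeomorphisms; each such automorphism preserves the intersection form, the canonical class $K=-2B+(2g-2)F+\sum_iE_i$, and, by \eqref{Biran}, the set $\calE(M)$. The first step is the dictionary: I would show that two blowup forms $\omega,\omega'$ satisfy $(M,\omega)\cong(M,\omega')$ if and only if $[\omega']=\phi^{*}[\omega]$ for some $\phi\in\Gamma$. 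The forward direction is immediate, since a symplectomorphism induces an element of $\Gamma$. For the converse, $\phi^{*}\omega$ is symplectomorphic to $\omega$ and is cohomologous to $\omega'$, while both $\phi^{*}\omega$ and $\omega'$ are deformation-equivalent to blowup forms (Lemma~\ref{lemequiv}); by the positive-genus uniqueness of symplectic forms in a cohomology class recalled in Remark~\ref{rem:blowup} (proved in \cite{HK:circle}), they are symplectomorphic, so $(M,\omega)\cong(M,\omega')$.

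Next I would pin down $\Gamma$ and compute its action on encoding vectors. Every $\phi\in\Gamma$ permutes $\calE(M)=\{E_1,\dots,E_k,F-E_1,\dots,F-E_k\}$, fixes the fibre class $F$ (the fibration being rigid in positive genus, by \cite{structure,ML}), and acts integrally while preserving $K$. A short computation then shows that $\Gamma$ is generated by the permutations of the $E_i$ together with the double reflections
\[
\rho_{ij}\colon\quad E_i\mapsto F-E_i,\ \ E_j\mapsto F-E_j,\ \ F\mapsto F,\ \ B\mapsto B+F-E_i-E_j,
\]
the crucial point being that a single reflection $E_i\mapsto F-E_i$ would force $B\mapsto B+\tfrac12 F-E_i$ and so fails to be integral. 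Tracking the pairings with $F,E_\ell,B$ shows that on the encoding vector $\Gamma$ fixes $\lambda_F$, permutes the $\delta_i$, and through $\rho_{ij}$ sends $\delta_i\mapsto\lambda_F-\delta_i$ and $\delta_j\mapsto\lambda_F-\delta_j$; that is, $\Gamma$ acts on the $\delta$-coordinates as the Weyl group $W(D_k)$ of permutations and even reflections about $\lambda_F/2$. Finally, since $\lambda_F=\tfrac1{2\pi}\langle[\omega],F\rangle>0$ and the volume $[\omega]^2=(2\pi)^2\big(2\lambda_F\lambda_B-\sum_i\delta_i^2\big)$ are both $\Gamma$-invariant, $\lambda_B$ is determined by $\lambda_F$, the volume, and the multiset of the $\delta_i$; this is exactly why, in the positive-genus case, \eqref{eq:red5} imposes no condition on $\lambda_B$.

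It then remains to recognize \eqref{eq:red5} as a fundamental domain. Substituting $x_i=\delta_i-\lambda_F/2$, the conditions become $x_1\ge x_2\ge\cdots\ge x_k$ and $x_1+x_2\le0$; the $k$ bounding hyperplanes $x_i=x_{i+1}$ and $x_1=-x_2$ are linearly independent reflection walls of $W(D_k)$, so the region is a closed Weyl chamber, hence a fundamental domain meeting every orbit exactly once. Existence of a $g$-reduced representative is then the statement that every orbit meets the chamber, realized constructively by the Cremona-type reduction of Algorithm~\ref{alg:cremona}; the reduced class, lying in the $\Gamma$-orbit of a blowup-form class, is itself represented by a blowup form $\omega'$, which is symplectomorphic to $\omega$ by the first step. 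Uniqueness is the statement that the chamber meets the orbit only once, which, since $\lambda_B$ is slaved to the reduced $\delta$-data, gives a unique $g$-reduced class and hence a unique such $\omega'$ up to symplectomorphism.

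I expect the main obstacle to be the second step: proving that $\Gamma$ is no larger than the group generated by the permutations and the double reflections $\rho_{ij}$. This requires knowing that a diffeomorphism must fix $F$ and act integrally and $K$-preservingly, which is what rules out the single reflections. It is precisely this failure of integrality for a single $E_i\mapsto F-E_i$ -- equivalently, the bundle-type swap underlying the two constructions \textcircled{a} and \textcircled{b} of Theorem~\ref{conjcount} -- that makes the relevant fundamental domain a $D_k$-chamber rather than the smaller $B_k$-chamber, and thereby makes \eqref{eq:red5} the correct notion of reduced form.
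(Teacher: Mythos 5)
Your overall architecture (translate to a group action on encoding vectors, show the $g$-reduced inequalities cut out a fundamental domain) is genuinely different from the paper's uniqueness proof, which instead identifies the set $\calE^v_{\min}$ of minimal-area exceptional classes (Proposition~\ref{thm:Emin}), shows a symplectomorphism matches these sets up (Lemma~\ref{lemsymp again}), and inducts by blowing down along them. Your Weyl-chamber observation -- that after setting $x_i=\delta_i-\lambda_F/2$ the walls $x_i=x_{i+1}$ and $x_1=-x_2$ are a simple system for $W(D_k)$, and that $\lambda_B$ is slaved to $\lambda_F$, the volume and the $\delta_i$ -- is correct and would give a clean uniqueness argument \emph{if} the group were what you claim. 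But there is a genuine gap exactly where you predicted one: your group $\Gamma$, defined as all automorphisms of $H_2$ induced by orientation-preserving diffeomorphisms, is \emph{strictly larger} than the group generated by permutations and the double reflections $\rho_{ij}$, and it does \emph{not} preserve $K$ or $\calE(M)$. This is precisely the content of Remark~\ref{variation of reduced}: by Li--Li, the reflections along $F-E_i$ and $-\mathbb{1}$ are realized by orientation-preserving diffeomorphisms, do not preserve the canonical class, and can carry the class of a blowup form out of the symplectic cone (e.g.\ $(6,1;2,1)\mapsto(4,1;1,0)$). Consequently both halves of your ``dictionary'' fail for $\Gamma$ as defined: the forward direction because a diffeomorphism-induced automorphism need not lie in your $W(D_k)$, and the converse because $\phi^*\omega$ need not be a blowup form, so the appeal to uniqueness of cohomologous symplectic forms does not apply.

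The repair is to replace $\Gamma$ by the automorphisms induced by \emph{symplectomorphisms between blowup forms}. These do preserve $\calE(M)$ (Lemma~\ref{calE and J}), fix $F$ (Lemma~\ref{keepsF}), preserve the intersection form and act integrally; writing $\varphi_*B=B+bF-\sum_{i\in S}E_i$, the condition $(\varphi_*B)^2=B^2$ forces $|S|=2b$ to be even, which is what rules out single flips -- you do not need $K$-preservation at all, and invoking it for diffeomorphisms is the incorrect step. With this upper bound on the group, your chamber argument gives uniqueness, since the orbit under the (possibly smaller) symplectomorphism-induced group is contained in a $W(D_k)$-orbit, which meets the closed chamber once. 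For existence you should not route through ``cohomologous blowup forms are symplectomorphic'' (which is stronger than Lemma~\ref{lemequiv} and not established here); it suffices to take $\omega'=\psi^*\omega$ for the explicit diffeomorphism $\psi$ produced by Lemma~\ref{permute diff top}, Corollary~\ref{cor:cremona} and Algorithm~\ref{alg:cremona}, as in Remark~\ref{remexproof}.
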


\noindent 
In fact, we prove even more than is stated in Theorem~\ref{thmexu}.  
We include an algorithm, Algorithm~\ref{alg:cremona}, which puts
a blowup form into its unique $g$-reduced form, by a composition of permutations of the 
$\delta_i$s and a map we denote the {\bf Cremona transformation} 
(see Definition \ref{Cremdef}). The latter map is a positive genus version of the 
(usual) Cremona transformation on blowups of $M_{S^2}$; it is motivated 
by the combinatorial description of the transformation as interwining 
two ways to obtain the same Hamiltonian $S^1$-manifold, as shown in
Figure~\ref{fig:cremona demo}.

\begin{figure}[h] 
\begin{tabular}{@{}c@{}}{\includegraphics[height=2.25cm]{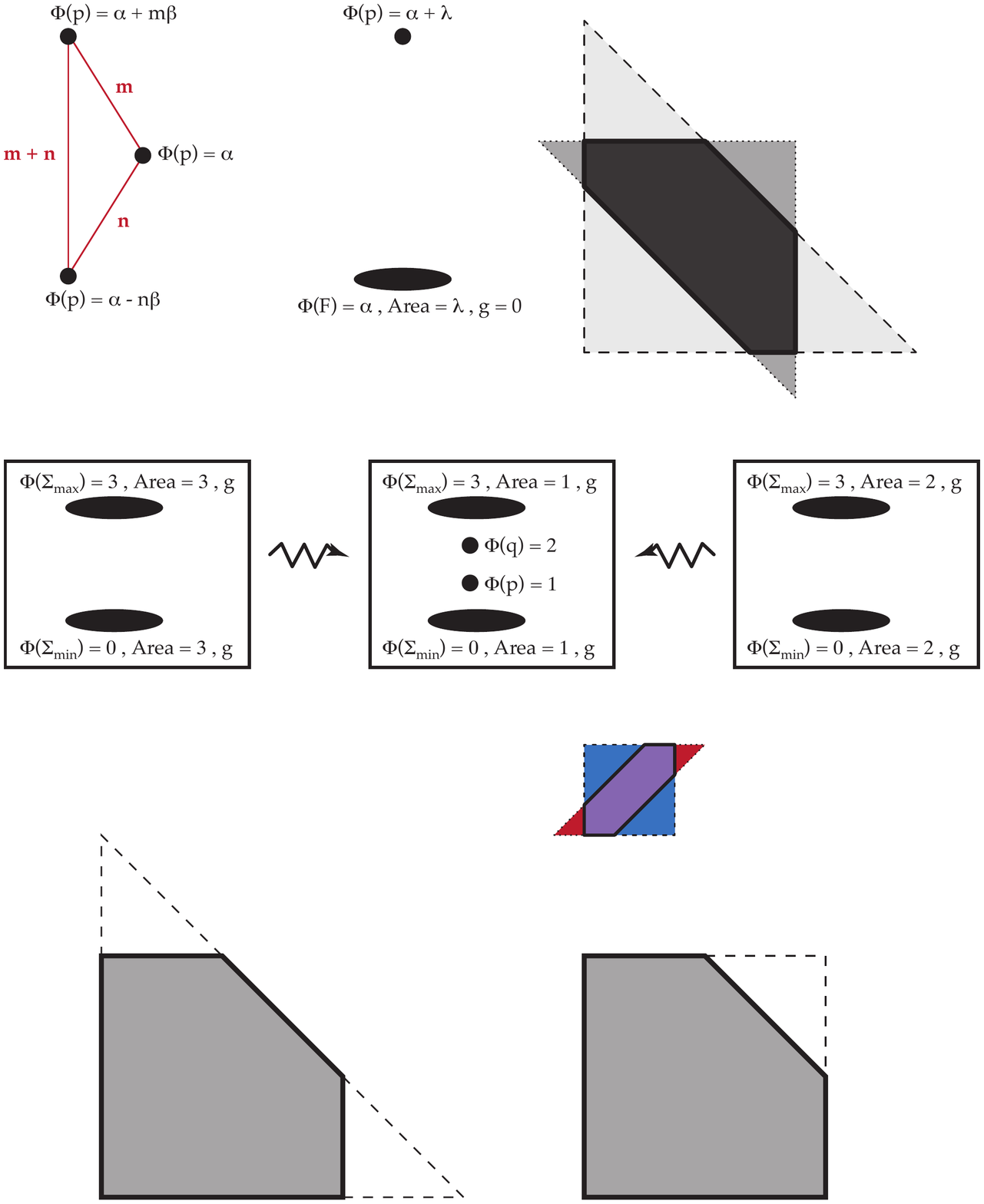}}\end{tabular}
 \hskip 1cm
\begin{tabular}{@{}c@{}}{\includegraphics[height=2.5cm]{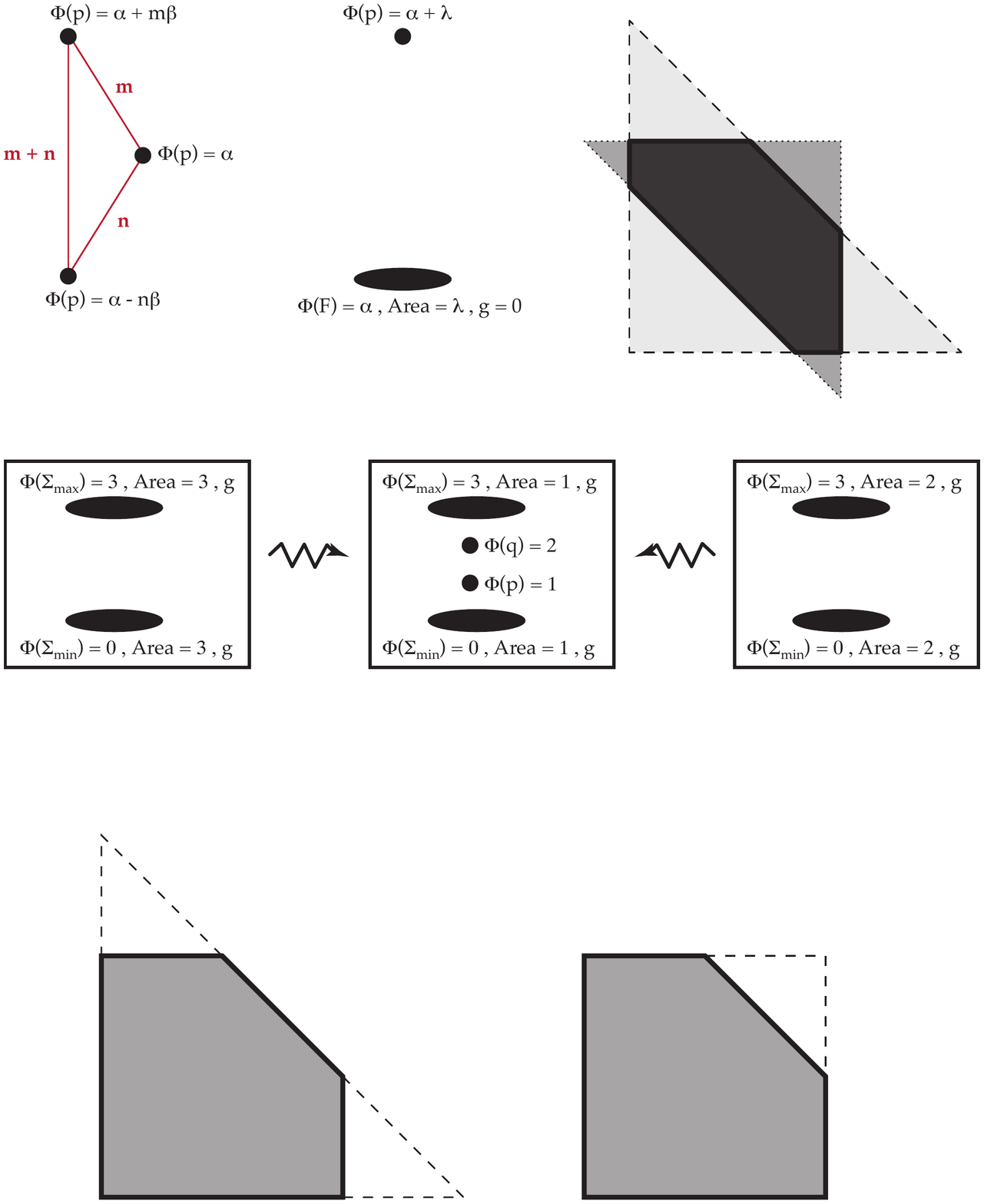}}\end{tabular}

\caption{{\bf On the left}, the same toric manifold, with the purple hexagonal
moment image, can be seen in two ways as a $2$-fold blowup of $\CP^1\times \CP^1$, 
with different symplectic forms and blowup sizes.  
In this case, the Cremona transformation turns the
vector $(3,3;2,2)$ (the blowup of the blue square)
 into $(3,2;1,1)$ (the blowup of the red parallelogram). \newline
{\bf On the right} we can see the $S^1$-manifold represented by the middle decorated
graph as a blowup from the left $(3,3)$ with size $2$ blowups, one each from the top and 
bottom; or from the right $(3,2)$ with size $1$ blowups, one each from the top and the bottom. 
As before,  the Cremona transformation turns the vector $(3,3;2,2)$ into 
$(3,2;1,1)$.}
\label{fig:cremona demo}

\end{figure}

Theorem~\ref{thmexu} also provides a way to determine when two
symplectic forms on the same manifold, on $(\Sigma \times S^2)_{k}$ or on $(M_{\Sigma})_{k}$, 
are diffeomorphic:
they must have the same $g$-reduced form.
In the case when 
$g(\Sigma)=0$ and $M=(M_{\Sigma})_{k}$, the existence and uniqueness of a $g$-reduced form is proved 
in \cite[Theorem~1.4]{blowups}.

\begin{Remark}\labell{variation of reduced}
Li-Li \cite{Li-Li02} and Li-Liu \cite{Li-Liu}, in the context of relating the symplectic genus of an integral cohomology class to its minimal genus, 
define reduced forms for blowups of $S^2 \times \Sigma$ (with $g(\Sigma)>0$)
by different conditions. 
First, they work with homology classes with $\Z$ coefficients whereas 
we work with cohomology classes over $\R$. 
More significantly, their conditions translate (by Poincar\'e\ duality) to the conditions on an 
integral $2$-cohomology class:
$$\lambda_B \geq \delta_1 \geq \ldots \geq \delta_k \geq 0.$$ 
By contrast, we require 
$\lambda_F\geq \delta_1+\delta_2$, and neither require  
$\lambda_B \geq \delta_1$ nor positivity. Note that the vector $(3,3;2,2)$ from
Figure~\ref{fig:cremona demo} is reduced in the sense of \cite{Li-Li02,Li-Liu}.  The
Cremona transformation turns it into $(3,2;1,1)$, which encodes a blowup
form diffeomorphic to $(3,3;2,2)$ that is $g$-reduced.
The requirement $\lambda_F \geq \delta_1+\delta_2$ is essential for the key 
claim that the class $E_k$ is of minimal symplectic area in our proof of 
Theorem \ref{conjcount}.  
 
Li and Li \cite[Lemma 3.4]{Li-Li02} prove that any class in $H_{2}$ over $\Z$ of 
nonnegative square is equivalent to a reduced  class under the 
action of orientation-preserving diffeomorphisms; they provide an algorithm to 
find the reduced class using reflections along integral homology classes 
that are realized by orientation-preserving diffeomorphisms.  In Li-Li's terminology, 
the Cremona transformation and the $(\delta_i,\delta_j)$-transpositions we use in 
Algorithm~\ref{alg:cremona} correspond to reflections along $F-E_1-E_2$ and along 
$E_i-E_j$, respectively.  However Li and Li's algorithm uses other transformations 
which we do not allow, e.g., $-\mathbb{1}$ and the reflections along 
$r_iF+\varepsilon_iE_i$.  Their reflections represent diffoemorphisms, but possibly
not symplectomorphisms. Their algorithm might send a vector encoding (the dual of) 
a blowup form $\omega$ to a vector that cannot encode a blowup form, 
or to a vector encoding a blowup form $\omega'$ that is not diffeomorphic 
to $\omega$. For example, Li and Li's algorithm sends the ($g$-reduced) vector
$(6,1;2,1)$ by reflection over $F-E_1$ to $(4,1;1,0)$, which does not represent a blowup
form.  Their algorithm sends the ($g$-reduced) vector $(12,2;3,3)$ by reflections over
$F-E_1$ and over $F-E_2$ to $(10,2;1,1)$.  Those vectors represent manifestly
different symplectic forms: the former admits no Hamiltonian circle actions, while
the latter does admit a Hamiltonian circle action. 
\end{Remark}

We conclude this section with a brief outline of the rest of the paper.
In Section~\ref{counting}, we recall the notion of the decorated graph associated to
a Hamiltonian circle action, and describe the effect of an equivariant symplectic blowup 
on the graph. We then derive from Theorem \ref{conjcount} results on the existence and number of Hamiltonian
circle actions on a fixed symplectic four-manifold.
Section~\ref{se:blowups} examines symplectic blowups of ruled symplectic four-manifolds and 
highlights the symplectic facts and techniques we need to prove the main theorems.
We deduce from Li-Liu's characterization of symplectic forms \cite{Li-Liu} a necessary
and sufficient condition for a vector to encode a cohomology class of a blowup form,
and describe how these techniques can compute certain symplectic invariants like
Gromov width and packing number (see Remarks~\ref{rem:G width} and \ref{rem:packing}).
In Section~\ref{se: at most 2 blowups}, we use the correspondence between Hamiltonian circle actions and decorated graphs to deduce results on symplectic manifolds from combinatorial observations.
Finally, we prove the existence portion of Theorem~\ref{thmexu} in Section \ref{sec:reduce}, 
and the uniqueness in Section \ref{sec:minimal set}.

%%%%%%%%%%%%%%%%%%%%%%%%%%%%
\section{Counting Hamiltonian circle actions on blowups\\ of irrational ruled manifolds}\label{counting}
%%%%%%%%%%%%%%%%%%%%%%%%%%%%

\subsection*{Decorated graphs} If a circle action $S^1\acts (M,\omega)$ is Hamiltonian, then there is a real-valued momentum map
$\Phi: M\to \R$.  This is a Morse-Bott function with
critical set corresponding to the fixed points.
When $\dim(M)=4$, 
the critical set can only consist
of isolated points and two-dimensional submanifolds.  The latter
can only occur at the extrema of $\Phi$.  
To $(M,\omega,\Phi)$ Karshon associates the following {\bf decorated graph} \cite[\S 2.1]{karshon}.
For each isolated fixed point $p$ there is a vertex $\langle p \rangle$,
labeled by the real number $\Phi(p)$.
For each two dimensional component $S$  of the fixed point set there is
a {\bf fat} vertex $\langle S \rangle$ labeled by two real numbers 
and one integer: the {\bf momentum map label} $\Phi(S)$, 
the {\bf area label} $\frac{1}{2\pi} \int_S \omega$, 
and the {\bf genus} $g$ of the surface $S$. 
A {\bf $\bfZ_k$-sphere} is a gradient sphere in $M$ on which $S^1$ acts with isotropy $\bfZ_k$.
For each $\bfZ_k$-sphere containing two fixed points $p$ and $q$,
the graph has an edge connecting the vertices $\langle p \rangle$
and $\langle q \rangle$ labeled by the integer $k$; the size $\frac{1}{2\pi} \int_S \omega$ of a $\bfZ_k$-sphere $S$ 
 is $1/k$ of the difference of the moment map values of its vertices.      
We note that vertical translations of the graph correspond to equivariant symplectomorphisms, and flips correspond to automorphisms of the circle.

\subsection*{Hamiltonian circle actions on irrational ruled symplectic manifolds}
Karshon's results  \cite[Theorem 6.3 and Lemma~6.15]{karshon} and  \cite[Table in Proof of Lemma~1]{YK:max_tori}
imply that
Hamiltonian circle actions on a ruled symplectic four-manifolds over a closed Riemann surface of positive genus $g$ 
correspond to decorated graphs that consist of two fat vertices with the same genus label $g$. Moreover, Karshon's results yield the following characterization of Hamiltonian circle actions on irrational ruled symplectic four-manifolds. See also \cite[Corollary 3.12]{pinso}.

\begin{figure}[h] 
\centering{
\includegraphics[width=10cm]{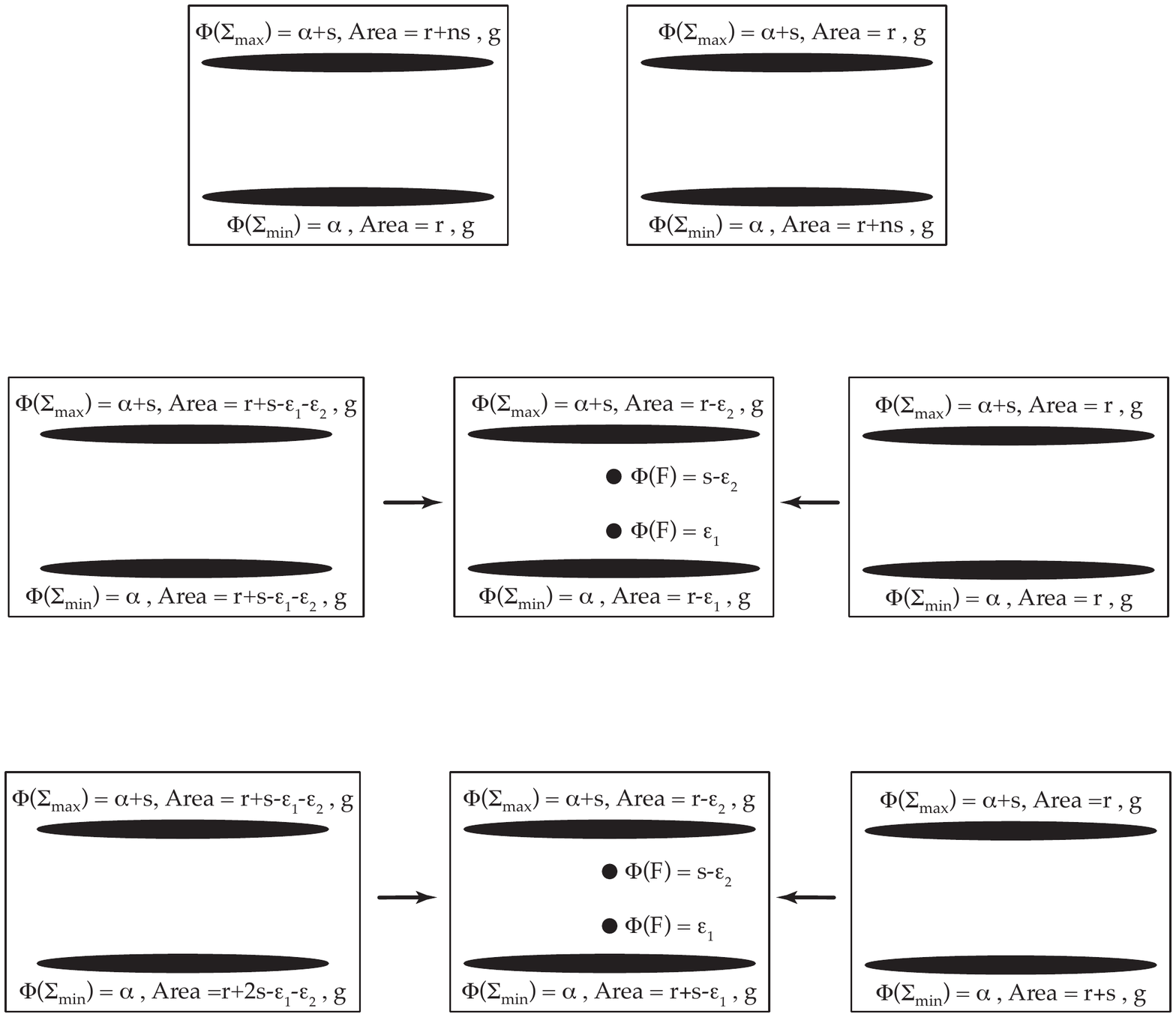}
\caption{Decorated graphs with exactly two fat vertices, differing by a flip. } 
\label{fig:fat vertices}
}
\end{figure}

\begin{Proposition}\labell{zerocase}
Let $(M,\omega)$ be a ruled symplectic manifold over a Riemann surface $\Sigma$ of positive genus $g$.
Up to equivariant symplectomorphisms
and automorphisms of the circle, the number of Hamiltonian circle actions on $(M,\omega)$  is $\left\lceil \frac{\lambda_B}{\lambda_F} \right\rceil$ if $M=\Sigma \times S^2$ and $\left\lceil \frac{\lambda_B-\half \lambda_F}{\lambda_F} \right\rceil$ if $M=M_{\Sigma}$.

Moreover, there is a one-to-one correspondence
between inequivalent Hamiltonian $S^1$-actions on $(M,\omega)$ and the nonnegative integers $0 \leq n<2\frac{\lambda_B}{\lambda_F}$ 
such that
\begin{itemize}
\item the integer $n$ is even if $\pi \colon M \to \Sigma$ is the trivial bundle; $n$ is odd if $\pi \colon M \to \Sigma$ is the non-trivial bundle.
\item there exists $r \in \R^{>0}$ such that  $\half(r+(r+n\lambda_F))=\lambda_B$.
\end{itemize}
\end{Proposition}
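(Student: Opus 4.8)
The plan is to combine Karshon's classification of Hamiltonian circle actions on these ruled surfaces (which, as recalled just above, puts them in bijection with decorated graphs consisting of exactly two fat vertices of genus $g$ and \emph{no} interior vertices) with a Duistermaat--Heckman volume computation that pins the two area labels to $\lambda_F$ and $\lambda_B$. Fix such an action, let $a<b$ be its two momentum labels, $A_{\min},A_{\max}$ its two area labels, and $S_{\min},S_{\max}$ the corresponding fixed surfaces. Both are sections of the ruling, so $[S_{\max}]-[S_{\min}]=nF$ for a unique integer $n$, and after a flip (an automorphism of the circle) we may assume $A_{\max}\geq A_{\min}$, i.e.\ $n\geq 0$. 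I would first record two elementary identities. Since the action is effective it must rotate each fiber at speed one, so a generic fiber is a free gradient sphere in the class $F$ whose area is the momentum span: $\lambda_F=b-a$. Pairing the class $\Omega$ with $[S_{\max}]-[S_{\min}]=nF$ gives $A_{\max}-A_{\min}=n\lambda_F$.

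The crucial step is the identity $\tfrac12(A_{\min}+A_{\max})=\lambda_B$. Because there are no interior fixed points, every regular reduced space $\Phi^{-1}(t)/S^1$ is diffeomorphic to $\Sigma$ and the Duistermaat--Heckman function $\rho(t)=\tfrac1{2\pi}\area(\Phi^{-1}(t)/S^1)$ is affine on $[a,b]$ with endpoint values $\rho(a)=A_{\min}$ and $\rho(b)=A_{\max}$. Integrating gives $\int_a^b\rho=\tfrac12(A_{\min}+A_{\max})\lambda_F$, while a direct computation in $H^2(M)$ gives $\tfrac1{(2\pi)^2}\int_M\omega\wedge\omega=\lambda_F\lambda_B$, using $F\cdot F=0$ and $\tfrac1{2\pi}\langle\Omega,B_0\rangle=\lambda_B$ for the self-intersection-zero (virtual) section $B_0$, where $B_0=B$ in the trivial case and $B_0=B_{-1}+\tfrac12F$ in the non-trivial case. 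Equating the two expressions yields $\tfrac12(A_{\min}+A_{\max})=\lambda_B$. Solving the two linear relations, $A_{\min}=\lambda_B-\tfrac n2\lambda_F=:r$ and $A_{\max}=\lambda_B+\tfrac n2\lambda_F$; positivity of the area label $r=A_{\min}$ of a genuine fixed surface is exactly $n<2\lambda_B/\lambda_F$, and the displayed condition $\tfrac12\bigl(r+(r+n\lambda_F)\bigr)=\lambda_B$ holds by construction. The parity of $n$ emerges from integrality: $[S_{\min}]=B_0-\tfrac n2F$ must be an honest section class, which for $\Sigma\times S^2$ forces $n$ even (even self-intersection $-n$) and for $M_{\Sigma}$ forces $n$ odd (odd self-intersection $-n$).

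This shows the assignment (action)$\,\mapsto n$ lands in the stated set of integers and is injective, since distinct $n$ give distinct unordered pairs $\{A_{\min},A_{\max}\}$, hence inequivalent graphs. For surjectivity I would, for each admissible $n$, build the standard fiberwise-rotation action on the appropriate $S^2$-bundle together with a compatible invariant symplectic form of fiber area $\lambda_F$ and section areas $\lambda_B\mp\tfrac n2\lambda_F$; running the volume computation backwards shows this form lies in the class $\Omega$, and then the uniqueness of symplectic forms on a ruled surface in a fixed cohomology class (McDuff and Lalonde--McDuff, as in Remark~\ref{rem:blowup}) identifies it with $\omega$ up to a diffeomorphism through which the action transports to $(M,\omega)$. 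Finally, counting the admissible integers gives the stated numbers: the even $n$ in $[0,2\lambda_B/\lambda_F)$ number $\lceil\lambda_B/\lambda_F\rceil$, and the odd $n$ in the same range number $\lceil(\lambda_B-\tfrac12\lambda_F)/\lambda_F\rceil$, which is the first assertion.

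I expect the main obstacle to be bookkeeping rather than depth: correctly identifying the reduced spaces and the endpoint values of the Duistermaat--Heckman function, so that the volume identity delivers $\lambda_B$ exactly, and reconciling the two conventions for $\lambda_B$ in the trivial and non-trivial cases so that the single virtual section $B_0$ governs both. The one genuinely geometric input beyond Karshon's classification is the existence and uniqueness of the ruled symplectic form needed for surjectivity, which I would quote rather than reprove.
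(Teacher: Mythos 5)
Your proposal is correct and follows essentially the same route as the paper, which derives this proposition directly from Karshon's classification by decorated graphs with two fat vertices (citing \cite{karshon} and the table in \cite{YK:max_tori}); your Duistermaat--Heckman computation pinning the area labels to $\lambda_B\mp\tfrac n2\lambda_F$, the parity argument via self-intersection of the section class, and the realization step via uniqueness of ruled symplectic forms are exactly the details that implicit citation encapsulates. The only blemish is a harmless normalization slip ($\tfrac1{(2\pi)^2}\int_M\omega\wedge\omega$ equals $2\lambda_F\lambda_B$, not $\lambda_F\lambda_B$; it cancels against the matching factor in the relation between $\int\rho$ and the Liouville volume, so your conclusion $\tfrac12(A_{\min}+A_{\max})=\lambda_B$ stands).
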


\begin{Remark}
Note that the enumeration of the inequivalent circle actions on a ruled symplectic manifold over a Riemann surface $\Sigma$ of positive genus $g$ is analogous to the enumeration of the inequivalent toric actions on a ruled symplectic manifold over $S^2$ as in  \cite{YK:max_tori}.
\end{Remark}

\subsection*{The effect of an $S^1$-equivariant blowup on the decorated graph}
Recall that we can think of a symplectic blowup of size $\delta=r^2/2$ as cutting out an embedded ball of radius $r$ and identifying the boundary to an exceptional sphere via the Hopf map. This carries a symplectic form that integrates on the sphere to $2\pi \delta$.
For more details see \cite{GS:birational} and \cite[Section 7.1]{intro}. If the embedding of the ball is $G$-equivariant centered at a $G$-fixed point, then the $G$-action extends to the symplectic blowup, see details in \cite{kk}. If the action is Hamiltonian, its moment map naturally extends to the equivariant symplectic blowup.

In Figure~\ref{fig:eq circle-blowup}, we describe all the possible effects of an $S^1$-equivariant symplectic blowup of a ruled 
manifold over a surface of positive genus, up to flips.  If the surface is of genus $0$, an $S^1$-equivariant symplectic blowup can
have additional effects. For a complete description of Hamiltonian $S^1$-blowups of $4$-manifolds,
see \cite[\S 6]{karshon}. We observe that flipping commutes with $S^1$-equivariant symplectic blowup.

\begin{figure}[h] 
\centering{
\begin{subfigure}[b]{0.58\textwidth}
\raisebox{7mm}{\includegraphics[width=8cm]{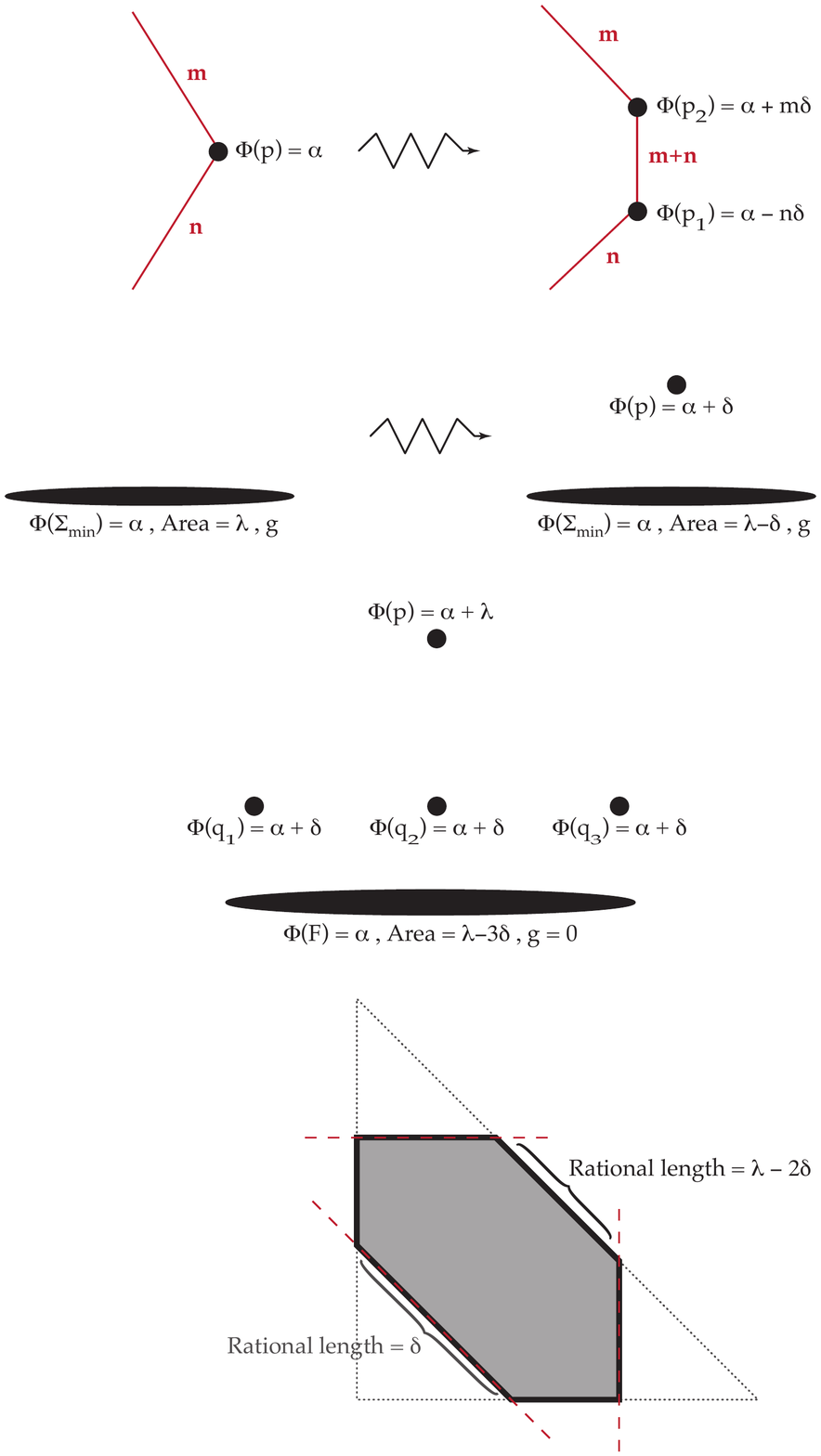}}
\end{subfigure} 
\begin{subfigure}[b]{0.38\textwidth}
\includegraphics[width=6cm]{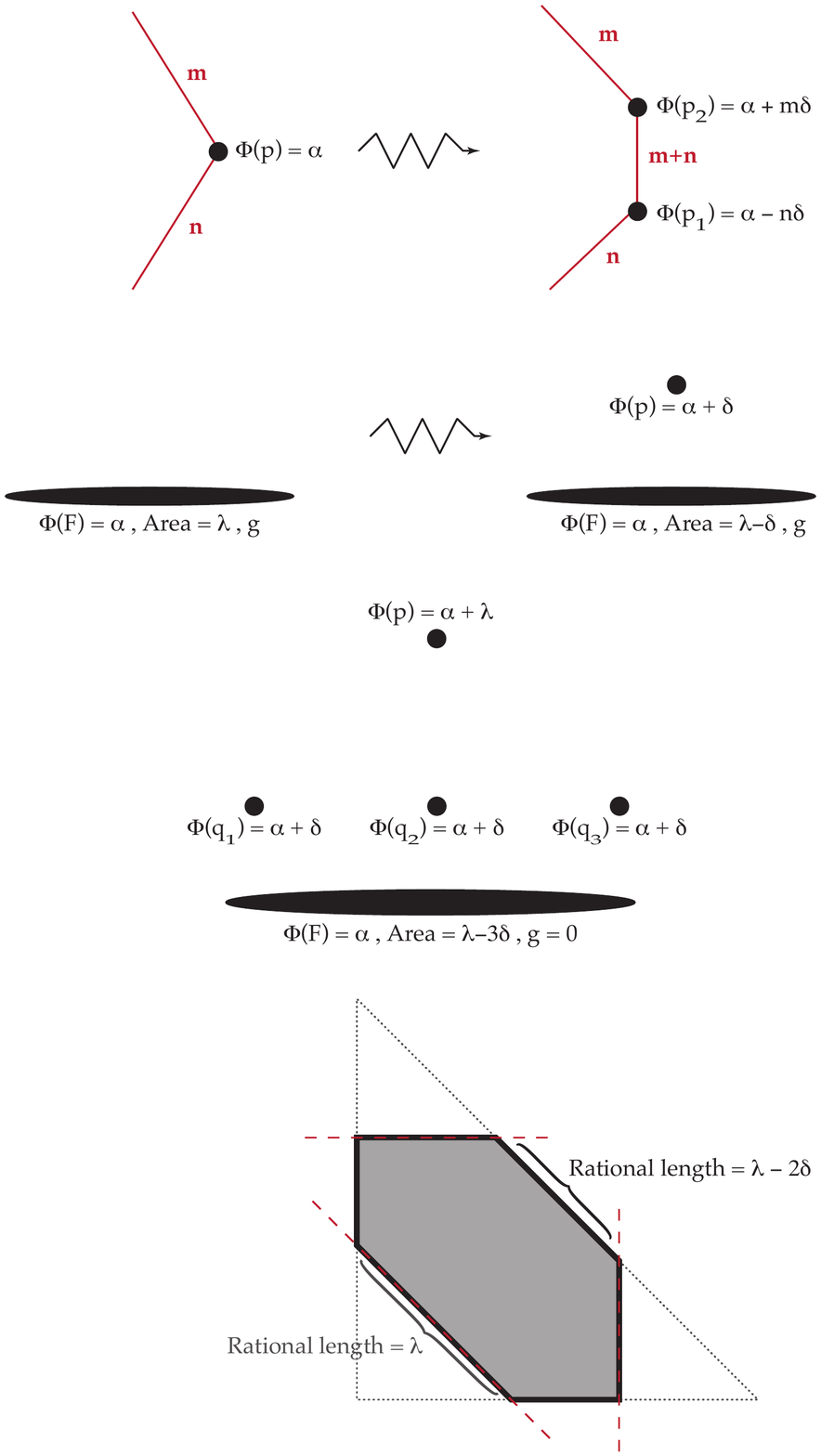}
\end{subfigure}
\caption{The effect on the decorated graph of an  $S^1$-equivariant blowup of size $\delta$: at a minimum surface on the left, and at an interior isolated fixed point on the right.}
\label{fig:eq circle-blowup}
}
\end{figure}

\begin{Remark}\labell{bupconditions}
By  \cite[Proposition 7.2]{karshon},
a Hamiltonian $S^1$-space
admits an $S^1$-equivariant blowup of size $\delta > 0$ centered at
some fixed point
if and only if one obtains a \emph{valid} decorated graph after the blowup. That is, the 
(fat or not) vertices created in the blowup do not surpass the other pre-existing (fat or not) 
vertices in the same chain of edges, and the fat vertices after the blowup have positive size labels. 
For quantitative description, see \cite[Lemma 3.3]{kk}.
\end{Remark}

\subsection*{An algorithm to count the (non-equivalent) Hamiltonian circle actions on a $k$-fold blowup of an irrational ruled manifold}

Let $k\geq 1$. Let $\Sigma$ be a Riemann surface of positive genus. 
When the cohomology class of a blowup form $\omega=\omega_{\lambda_F,\lambda_B;\delta_1,\ldots,\delta_k}$ on $M=(\Sigma \times S^2)_{k}$ ($M=(M_{\Sigma})_{k}$) is $g$-reduced, Theorem \ref{conjcount} reduces the question of counting the number of Hamiltonian circle actions on $(M,\omega)$ to a combinatorial one. It is enough to count the number of possible ways to get valid decorated graphs by $k$ $S^1$-blowups of sizes $\delta_1,\ldots,\delta_k$ in descending order, starting from  a decorated graph with two fat vertices, the top of size $\lambda_B-\frac{n}{2}\lambda_F$ and the bottom of size $\lambda_B+\frac{n}{2}\lambda_F$, for $0 \leq n < 2\frac{\lambda_B}{\lambda_F}$ that is even if $M=(\Sigma \times S^2)_{k}$ and odd if $M=(M_{\Sigma})_{k}$. We count the resulting decorated graphs up to vertical translations and flips.

By Corollary \ref{cor-same}, it is enough to count the Hamiltonian circle actions on symplectic $k$-fold blowups of $\Sigma \times S^2$.
The counting algorithm was implemented by Tair Pnini. 
Let $\omega_{\lambda_F,\lambda_B;\delta_1,\ldots,\delta_k}$ be a blowup form on $(\Sigma \times S^2)_{k}$ 
whose cohomology class is in $g$-reduced form and is encoded by a vector 
$(\lambda_F,\lambda_B;\delta_1,\ldots,\delta_k)$.
At step $0$ the program creates the set of graphs with two fat vertices determined by $\lambda_B, \lambda_F$, 
corresponding to the even integers $0 \leq n < 2 \frac{\lambda_B}{\lambda_F}$.
At step $i$, for $1 \leq i \leq k$, it  
creates the set of graphs (up to equivalence) which may be obtained by performing a valid blowup of size $\delta_i$ 
on a graph in the set created at step $i-1$. 
The output of the program is the number of graphs in the set obtained at step $k$.

This is a ``greedy" algorithm. We use data structures designed to reduce the number of tests for equivalence 
between pairs of graphs and to optimize the equivalence test. A detailed description of the data structures and the 
pseudo-code of the algorithm is given in Appendix~\ref{tair} by Tair Pnini.

\begin{Corollary}
Let $k\geq 1$ and assume that we are performing $k$ blowups of equal size: 
\newline $\delta_1=\ldots=\delta_k=\varepsilon$.
\begin{itemize}
\item Assume $2 \varepsilon < \lambda_F$. The number of Hamiltonian circle actions on $((\Sigma \times S^2)_{k},\omega_{\lambda_F,\lambda_B;\varepsilon,\ldots,\varepsilon})$ equals 
\begin{equation}\label{eq:2e<lambda_F}
\left(\sum_{n=1}^{\left\lceil{\frac{\lambda_B}{\lambda_F}} \right \rceil-1}
\left( \ \sum_{j=0}^{k}\delta_{j \varepsilon < (\lambda_B-{n} \lambda_F)} \delta_{(k-j)\varepsilon<(\lambda_B+{n}\lambda_F)}\right)\right)
+
\sum_{j=0}^{\left \lfloor \frac{k}{2} \right \rfloor}\delta_{j \varepsilon < \lambda_B} \delta_{(k-j)\varepsilon<\lambda_B},
\end{equation}
where $$\delta_{a<b}=\begin{cases}  1\text{ if }a<b\\ 0 \text{ if }a \geq b\end{cases}.$$ 
We count the possible ways to preform $k$ blowups of size $\varepsilon$, each at a fat vertex, starting from one of the decorated graphs of two fat vertices listed in Proposition \ref{zerocase}.
\item
Assume $2 \varepsilon =\lambda_F$.
The number of Hamiltonian circle actions  on $((\Sigma \times S^2)_{k},\omega_{\lambda_F,\lambda_B;\varepsilon,\ldots,\varepsilon})$ equals 
$$
\eqref{eq:2e<lambda_F} - \left(\sum_{n=1}^{\left \lceil{\frac{\lambda_B}{\lambda_F}}\right \rceil-1} 
\left(\ \sum_{j=0}^{k-2} {\delta_{j\varepsilon<(\lambda_B-n\lambda_F)}\cdot\delta_{(k-2-j)\varepsilon<(\lambda_B+(n-1)\lambda_F)}}\right)\right).
$$
We have subtracted the equivalent actions, like those in Figure~\ref{fig:equal size same}
where blowups of size $\frac{\lambda_F}{2}$ from the top or bottom fat vertices will end up at the same height, resulting in
the same decorated graphs.
\item 
Similarly, if $2 \varepsilon < \lambda_F$, the number of Hamiltonian circle actions on  $((M_{\Sigma})_{k},\omega_{\lambda_F,\lambda_B;\varepsilon,\ldots,\varepsilon})$ equals the sum
\begin{equation}\label{eq:2e<<lambda_F}
\sum_{n=0}^{\left \lceil{\frac{\lambda_B-\half \lambda_F}{\lambda_F}}\right \rceil-1}
\left(\sum_{j=0}^{k}\delta_{j \varepsilon < \left(\lambda_B-\frac{2n+1}{2} \lambda_F\right)} \delta_{(k-j)\varepsilon<\left(\lambda_B+\frac{2n+1}{2}\lambda_F\right)}\right).
\end{equation}
If $2 \varepsilon =\lambda_F$, it equals 
$$
\eqref{eq:2e<<lambda_F} - \left(\sum_{n=1}^{\left \lceil{\frac{\lambda_B-\half \lambda_F}{\lambda_F}} \right \rceil-1} 
\left(\sum_{j=0}^{k-2} {\delta_{j\varepsilon<\left(\lambda_B-\frac{2n+1}{2}\lambda_F\right)}\cdot\delta_{(k-2-j)\varepsilon<\left(\lambda_B+\frac{2(n-1)+1}{2}\lambda_F\right)}}\right)\right).
$$
\item In particular, if  $k \varepsilon \geq 2 \lambda_B$ then there are no Hamiltonian circle actions.
\end{itemize}
\end{Corollary}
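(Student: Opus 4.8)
The plan is to realize the count combinatorially and then analyze precisely when two combinatorial configurations coincide. Since $[\omega]$ is $g$-reduced, Theorem~\ref{conjcount} together with the reduction described above identifies the number of inequivalent Hamiltonian circle actions with the number of valid decorated graphs, taken up to vertical translation and flip, obtained by performing $k$ equivariant blowups of size $\varepsilon$ at fat vertices, starting from one of the two-fat-vertex graphs of Proposition~\ref{zerocase}. I index a starting graph for $(\Sigma\times S^2)_k$ by $n\in\{0,1,\dots,\lceil\lambda_B/\lambda_F\rceil-1\}$, so that its two fat vertices lie at moment distance $\lambda_F$ with area labels $\lambda_B-n\lambda_F$ and $\lambda_B+n\lambda_F$ (this is the graph attached to the even integer $2n$ in Proposition~\ref{zerocase}); the $(M_\Sigma)_k$ case is identical after replacing $n\lambda_F$ by $\frac{2n+1}{2}\lambda_F$. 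A blowup pattern is recorded by the number $j$ of the $k$ blowups carried out at the top fat vertex, the remaining $k-j$ being carried out at the bottom.

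First I would pin down the resulting graph. Because the action is effective the circle rotates each fiber once, so at a fat vertex it acts on the normal direction with weight $\pm1$; hence blowing up a point of the fixed surface produces a free exceptional sphere, which contributes no edge to the decorated graph (edges record only $\bfZ_m$-spheres with $m\ge2$). Reading off Figure~\ref{fig:eq circle-blowup}, the pattern $(n,j)$ therefore yields the edgeless graph consisting of the two fat vertices, now with area labels $\lambda_B-n\lambda_F-j\varepsilon$ and $\lambda_B+n\lambda_F-(k-j)\varepsilon$, together with $j$ isolated vertices at moment distance $\varepsilon$ below the top and $k-j$ isolated vertices at distance $\varepsilon$ above the bottom. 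By Remark~\ref{bupconditions} validity amounts exactly to positivity of both fat-vertex areas, that is $j\varepsilon<\lambda_B-n\lambda_F$ and $(k-j)\varepsilon<\lambda_B+n\lambda_F$, which are precisely the indicator factors in \eqref{eq:2e<lambda_F} (and in \eqref{eq:2e<<lambda_F} after the half-integer shift).

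The core is to decide when two valid patterns give equivalent graphs. If $2\varepsilon<\lambda_F$, the two clusters of isolated vertices sit at distinct heights separated by $\lambda_F-2\varepsilon>0$, so the graph determines, for each fat vertex, its area together with the number of isolated vertices at distance $\varepsilon$ from it; the resulting unordered pair recovers $(n,j)$ uniquely, except that at $n=0$ the starting graph is flip-symmetric and $(0,j)$ is identified with $(0,k-j)$. Counting valid patterns accordingly --- all $(n,j)$ with $n\ge1$, and $(0,j)$ only for $0\le j\le\lfloor k/2\rfloor$ --- reproduces \eqref{eq:2e<lambda_F}; for $(M_\Sigma)_k$ the index is always positive, so no flip term occurs and one obtains \eqref{eq:2e<<lambda_F}. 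If instead $2\varepsilon=\lambda_F$, all $k$ isolated vertices collapse onto the single midpoint height and, being edgeless, become indistinguishable, so the graph retains only the two fat-vertex areas and the total $k$. Writing $\lambda_F=2\varepsilon$, the top area becomes $\lambda_B-(2n+j)\varepsilon$, so two distinct patterns coincide exactly when their values of $2n+j$ agree, or agree after the flip $2n+j\mapsto k-(2n+j)$.

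These coincidences are generated by the elementary move $(n,j)\mapsto(n-1,j+2)$, which preserves $2n+j$, together with the flip; I would check that such a move keeps a pattern valid exactly under the conditions $j\varepsilon<\lambda_B-n\lambda_F$ and $(k-2-j)\varepsilon<\lambda_B+(n-1)\lambda_F$ appearing in the subtracted sum, and that summing one unit over each admissible link removes every redundancy once (a class of $t$ mutually equivalent patterns contributing $t-1$ links). Subtracting this from \eqref{eq:2e<lambda_F} yields the stated $2\varepsilon=\lambda_F$ formula, and the same bookkeeping with the half-integer shift gives the $(M_\Sigma)_k$ version. The concluding assertion is then immediate: for every pattern the two fat-vertex areas sum to $(\lambda_B-n\lambda_F-j\varepsilon)+(\lambda_B+n\lambda_F-(k-j)\varepsilon)=2\lambda_B-k\varepsilon$, and positivity of both forces $2\lambda_B-k\varepsilon>0$; hence if $k\varepsilon\ge2\lambda_B$ there is no valid graph and no Hamiltonian circle action. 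I expect the main obstacle to be this last bookkeeping rather than the geometry: rigorously establishing injectivity of the assignment $(n,j)\mapsto$ graph when $2\varepsilon<\lambda_F$, and, when $2\varepsilon=\lambda_F$, verifying that the subtracted sum accounts for each coincidence exactly once while correctly handling the interaction of the same-$(2n+j)$ collisions with the flip identifications at the $n=0$ boundary.
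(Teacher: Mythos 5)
Your strategy coincides with the paper's (the paper only sketches it): reduce via Theorem~\ref{conjcount} and Proposition~\ref{zerocase} to counting valid decorated graphs up to translation and flip, record a pattern by $(n,j)$, and analyze coincidences. For the trivial bundle your deferred bookkeeping does close: when $\lambda_F=2\varepsilon$ the validity of $(n,j)$ depends only on $m=2n+j$ (the fat areas are $\lambda_B-m\varepsilon$ and $\lambda_B-(k-m)\varepsilon$), so each class $\{2n+j=m\}$ is simultaneously valid, the links $(n,j)\leftrightarrow(n-1,j+2)$ form a path inside a class and hence number one less than its size, and the $\lfloor k/2\rfloor$ truncation of the $n=0$ term in \eqref{eq:2e<lambda_F} removes exactly one pattern from each flip pair $\{m,k-m\}$, since the larger element $k-m>\lfloor k/2\rfloor$ is always realized by the pattern $(0,k-m)$. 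This yields \eqref{eq:2e<lambda_F} minus the subtracted sum, and your argument for the last bullet (both fat areas sum to $2\lambda_B-k\varepsilon$) is fine.

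There are, however, two gaps. The first is minor: you (like the statement) assume all $k$ blowups are performed at fat vertices, but Theorem~\ref{conjcount} and the reduction permit equivariant blowups at \emph{any} fixed point, so you must rule out blowups at the interior isolated fixed points created earlier. This is easy to repair: such a point sits at moment distance exactly $\varepsilon$ from an extremum with isotropy weights $\{1,-1\}$, so a further blowup of size $\varepsilon$ there would create a vertex at the extremal level (a zero-area gradient sphere), which is invalid by Remark~\ref{bupconditions}; induction then confines all blowups to the fat vertices, and without this step the formulas could be undercounts. The second gap is substantive: your closing claim that ``the same bookkeeping with the half-integer shift gives the $(M_\Sigma)_k$ version'' fails when $2\varepsilon=\lambda_F$. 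For the non-trivial bundle there is no flip-symmetric starting graph, but flip identifications still occur \emph{across} starting graphs: writing $M=2n+1+j$, the graphs of patterns with values $M$ and $k-M$ are flips of one another (all $k$ isolated vertices lie at the midpoint), and neither \eqref{eq:2e<<lambda_F} (which has no $\lfloor k/2\rfloor$ truncation) nor the link subtraction merges them. Concretely, for $k=4$, $\lambda_F=2$, $\varepsilon=1$, $\lambda_B=10$, the displayed formula gives $19-10=9$, while the number of graphs up to flip is $8$ (the classes $M=1$ and $M=3$ coincide); and $8$ is exactly what the trivial-bundle formula returns for the symplectomorphic manifold $(\Sigma\times S^2)_4$ with vector $(2,10;1,1,1,1)$ supplied by Corollary~\ref{cor-same}. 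So an honest execution of your plan produces an additional flip-pair correction term and therefore does \emph{not} reproduce the stated $(M_\Sigma)_k$ formula; this discrepancy (which your proposal papers over at precisely the point you flagged as ``the same bookkeeping'') indicates that the third bullet as printed needs such a correction, rather than the proof you outline.
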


\begin{figure}[h] 
\centering{
\includegraphics[width=15.5cm]{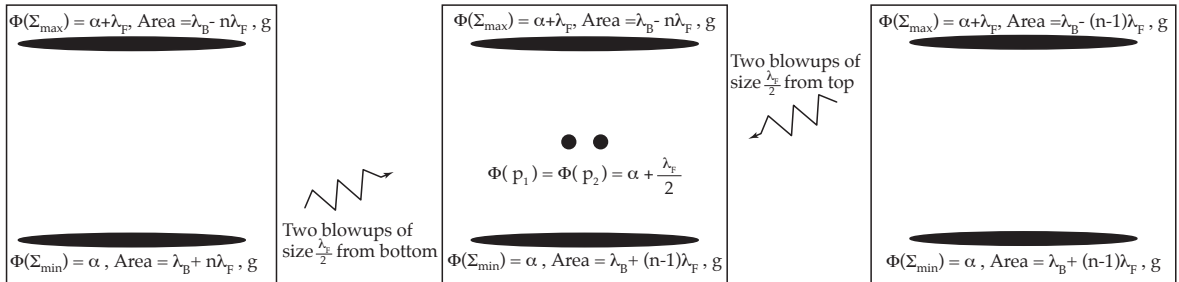}
\caption{
This figure shows coincidentally equivalent decorated graphs arising from blowups of different graphs.
} 
\label{fig:equal size same}
}
\end{figure}

For example, for $k \geq 2$ there are no Hamiltonian circle actions on $(\Sigma \times S^2)_{k}$ 
with a blowup form $\omega_{2,1;\frac{2}{k},\ldots \frac{2}{k}}$; by Lemma \ref{poslemma}, such a 
blowup form exists.  This lack of Hamiltonian actions also appears for equal blowups in the simply connected case; 
see for example \cite{kk}.

\begin{Corollary}
Let $k \geq 1$. The number of Hamiltonian circle actions on $((\Sigma \times S^2)_{k},\omega_{\lambda_F,\lambda_B;\delta_1,\ldots,\delta_k})$ is at most 
\begin{equation} \labell{eqmax}
 \left(\left\lceil \frac{\lambda_B}{\lambda_F} \right\rceil-\half \right) (k+1)!
\end{equation}
where $\lceil a \rceil$ denotes the smallest integer greater than or equal to $a$.
If 
\begin{itemize}
\item $\sum_{i=1}^{k}\delta_i < \lambda_F$, and
\item $\sum_{i=1}^{k}\delta_i < \lambda_B-\frac{n}{2}\lambda_F$ for every even $n < 2\frac{\lambda_{B}}{\lambda_{F}}$, 
and
\item for every $j$, the sum $\sum_{i=j+1}^{k}{\delta_i}<\delta_j$, and moreover
\item for every $j$ and $s$, we have $\sum_{i=1}^{s}F_{i+1} \delta_{j+i}< F_{1}\delta_{j}$, where $\{F_i\}$ is the sequence of Fibonacci numbers,
\end{itemize}
then the number of Hamiltonian circle actions on $((\Sigma \times S^2)_{k},\omega_{\lambda_F,\lambda_B;\delta_1,\ldots,\delta_k})$ equals \eqref{eqmax}.
\end{Corollary}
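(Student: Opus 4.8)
The plan is to translate the statement into the combinatorics of decorated graphs and then run a branching count. By Theorem~\ref{conjcount} together with the reduction described in the counting algorithm, enumerating the inequivalent Hamiltonian circle actions on $((\Sigma \times S^2)_{k},\omega_{\lambda_F,\lambda_B;\delta_1,\ldots,\delta_k})$ amounts to counting, up to vertical translation and flip, the valid decorated graphs obtained by performing $S^1$-equivariant blowups of sizes $\delta_1,\ldots,\delta_k$ in this descending order, starting from one of the base graphs of Proposition~\ref{zerocase}. Those base graphs each consist of two fat vertices and are indexed by the even integers $0\le n<2\lambda_B/\lambda_F$; up to flip there are exactly $N:=\lceil \lambda_B/\lambda_F\rceil$ of them, the flip sending the graph indexed by $n$ to the one indexed by $-n$ and fixing the one indexed by $n=0$.

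For the upper bound I would track the number of fixed points through the process. A base graph has the two fat vertices and no isolated fixed point, and by the two local models recorded in Figure~\ref{fig:eq circle-blowup}, blowing up either at a fat vertex or at an interior isolated fixed point raises the number of fixed points by exactly one. Hence after $j$ blowups there are $2+j$ fixed points, so the $i$-th blowup can be centred at no more than $i+1$ points, and each base graph produces at most $\prod_{i=1}^{k}(i+1)=(k+1)!$ graphs. Summing over the $N$ base graphs gives $N(k+1)!$, and the improvement to $\left(N-\half\right)(k+1)!$ comes from the flip: every flip orbit of an outcome of the $n=0$ base graph has a representative whose first blowup is centred at the bottom fat vertex (apply the flip when it was centred at the top, using that the $n=0$ base graph is flip symmetric), and there are at most $3\cdot4\cdots(k+1)=\half(k+1)!$ such outcomes; while for $n>0$ every flip orbit of an outcome of the $\pm n$ pair has a representative among the at most $(k+1)!$ outcomes of the $n$-graph. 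Adding the $N-1$ contributions of $(k+1)!$ to the single contribution of $\half(k+1)!$ yields the bound \eqref{eqmax}.

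For the equality statement I would show that under the four hypotheses every branch in this count is both realizable and distinct. The first two hypotheses, $\sum_i\delta_i<\lambda_F$ and $\sum_i\delta_i<\lambda_B-\tfrac{n}{2}\lambda_F$ for every admissible even $n$, guarantee that a blowup of size $\delta_i$ at any of the available fixed points always produces a valid decorated graph in the sense of Remark~\ref{bupconditions}: the accumulated size never exhausts the fibre direction nor the room above or below the fat vertices, so the branching factor is exactly $i+1$ at each step. The last two hypotheses are separation conditions forcing distinctness. Reading off the momentum labels, the edge isotropy labels, and the sizes produced by a blowup sequence, one sees that repeated blowups along a chain of isolated fixed points make the isotropy weights grow and the momentum shifts accumulate with Fibonacci coefficients; the inequalities $\sum_{i>j}\delta_i<\delta_j$ and $\sum_{i=1}^{s}F_{i+1}\delta_{j+i}<F_1\delta_j$ make the sizes decrease faster than these weights grow, so the multiset of labels of the resulting graph determines the blowup sequence uniquely. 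Distinct base graphs stay separated because the base is recovered by blowing the graph all the way down, and the only surviving identification is the flip halving of the $n=0$ family; hence the total is exactly $(N-1)(k+1)!+\half(k+1)!=\left(N-\half\right)(k+1)!$.

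The hard part will be the distinctness in the equality statement, namely verifying that no two of the $(k+1)!$ blowup sequences from a fixed base graph, and no sequences from different base graphs, can yield equivalent decorated graphs. This is precisely where the Fibonacci hypothesis is indispensable, since the worst coincidences come from repeatedly blowing up along a single chain, where momentum labels accumulate the size contributions with Fibonacci weights; controlling these demands careful bookkeeping of how each label depends on the entire history of the blowups rather than on any single $\delta_i$.
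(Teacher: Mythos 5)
Your upper bound argument is correct and is the natural one: after $j$ blowups a graph has $2+j$ fixed components, so the $i$-th blowup has at most $i+1$ possible centres, giving at most $(k+1)!$ outcomes per base graph; the base graphs are indexed by the $\left\lceil \lambda_B/\lambda_F\right\rceil$ admissible even integers $n$, and the flip symmetry of the $n=0$ graph lets you normalize its first blowup to the bottom fat vertex, cutting its contribution to $\tfrac{1}{2}(k+1)!$. That all checks out.

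The equality half has a genuine gap. Your claim that the first two hypotheses alone ``guarantee that a blowup of size $\delta_i$ at any of the available fixed points always produces a valid decorated graph,'' so that the branching factor is exactly $i+1$ at every step, is false. Validity of a blowup of size $\delta$ at an interior isolated fixed point requires $\delta$ to be strictly smaller than the sizes of the adjacent $\bfZ_m$-spheres, and those sizes are of the form $\delta_j$ minus the sum of the sizes of all later blowups performed at their endpoints; meanwhile the displacement of the new vertices is governed by the isotropy weights, which grow like Fibonacci numbers along a chain. Concretely, take $\delta_1=\dots=\delta_k=\varepsilon$ with $k\varepsilon$ tiny: your first two hypotheses hold, yet blowing up $\delta_2=\delta_1$ at the isolated fixed point created by the first blowup would place a vertex at height $\delta_1-\delta_2=0$, which is not a valid graph -- and indeed the paper's preceding corollary on equal-size blowups returns a count that only allows blowups at the fat vertices, strictly smaller than \eqref{eqmax}. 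So the third and fourth hypotheses are what make every branch \emph{realizable} (Remark~\ref{bupconditions} and the quantitative conditions of \cite[Lemma 3.3]{kk}), not merely distinct; your proof as structured never establishes that all $(k+1)!$ branches are valid. In addition, the distinctness of the resulting graphs -- which is where the Fibonacci-weighted signed sums of the $\delta_i$'s appearing as momentum labels must be shown to determine the blowup history -- is only gestured at and explicitly deferred as ``the hard part,'' so that portion of the equality statement remains unproved.
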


For example, if $\lambda_B \in \N$, $\lambda_F=1$ and $\delta_{i}=\frac{1}{4^i}$ for $1 \leq i \leq k$, the number of Hamiltonian circle actions is $(\lambda_B -\half) \cdot (k+1)!$. (By Lemma \ref{poslemma},
 there exists a blowup form on $(\Sigma \times S^2)_{k}$ whose cohomology class is encoded by this vector, for every $k \geq 1$.)

%%%%%%%%%%%%%%%%%%%%%%%%%%%%%%%%%%
\section{Blowups of ruled symplectic four-manifolds}  \label{se:blowups}
%%%%%%%%%%%%%%%%%%%%%%%%%%%%%%%%%%

Let $\Sigma$ be a compact connected Riemann surface. Fix an integer $k \geq 0$.
Let $M=(\Sigma \times S^2)_{k}$ or $M=(M_{\Sigma})_{k}$.
As a smooth manifold, $M$ is obtained by $k$ iterations of a connected sum with $\ol{\CP^2}$, 
$\CP^2$ equipped with the opposite orientation. In general, the connected sum
construction $\tN=N\# \ol{\CP^2}$ 
depends on a number of choices,
but if $\tN'$ is another connected sum 
obtained from different choices,
then there exists a diffeomorphism
from $\tN$ to $\tN'$ that respects the splitting $H_2(\tN) = H_2(N) \oplus \Z E$ induced by the constructions.
This follows from standard arguments in differential topology (see e.g., \cite[Chapter~10]{Brocker-Janich}). 
In particular, if $M$ is obtained from $\Sigma \times S^2$ or $M_\Sigma$ by blowing up at $k$ distinct points 
and $M'$ is obtained by blowing up at a permutation of the same points, 
then the two manifolds are the same,  up to relabeling the exceptional classes $E_1,\ldots,E_k$. 
\begin{Lemma} \labell{permute diff top}
Fix a vector $(\lambda_F,\lambda_B;\delta_1,\ldots,\delta_k)$ and a permutation $\sigma\in S_k$.
Suppose that there exists a blowup form $\omega$ on $M$
whose cohomology class is encoded by this vector.
Then there exists a blowup form $\omega'$ 
whose cohomology class is encoded by the vector
$(\lambda_F,\lambda_B;\delta_{\sigma(1)},\ldots,\delta_{\sigma(k)})$
and $(M,\omega)$, and $(M,\omega')$ are symplectomorphic.
\end{Lemma}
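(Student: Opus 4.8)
The plan is to realize the permutation of the $\delta_i$ by pulling back $\omega$ along a self-diffeomorphism of $M$ that permutes the exceptional classes while fixing the base homology. First I would invoke the topological fact recorded in the discussion preceding the lemma: since $M$ is $N \# k\,\overline{\CP^2}$ with $N = \Sigma \times S^2$ or $N = M_\Sigma$, permuting the blowup points produces a diffeomorphism $f \colon M \to M$ that respects the splitting $H_2(M) = H_2(N) \oplus \bigoplus_{i=1}^k \Z E_i$, acting as the identity on $H_2(N)$ and by $f_* E_i = E_{\sigma(i)}$ on the exceptional classes. In particular $f_*$ fixes $F$ and $B$ (and $B_{-1}$), the classes used to read off $\lambda_F$ and $\lambda_B$.

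Then I would set $\omega' := f^* \omega$. Since $f$ is a diffeomorphism, $\omega'$ is a symplectic form and $f \colon (M, \omega') \to (M, \omega)$ is by construction a symplectomorphism, which already gives $(M,\omega) \cong (M,\omega')$. To identify the encoding of $[\omega']$, I would use $\langle [\omega'], A\rangle = \langle [\omega], f_* A\rangle$ for $A \in H_2(M)$: because $f_*$ fixes $F$, $B$, $B_{-1}$ the numbers $\lambda_F$ and $\lambda_B$ are unchanged, while $\frac{1}{2\pi}\langle [\omega'], E_j\rangle = \frac{1}{2\pi}\langle [\omega], E_{\sigma(j)}\rangle = \delta_{\sigma(j)}$, so $[\omega']$ is encoded by $(\lambda_F, \lambda_B; \delta_{\sigma(1)}, \ldots, \delta_{\sigma(k)})$, as required.

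It remains to check that $\omega'$ is a blowup form in the sense of Definition~\ref{def:blowup}. Here I would transport the spheres: let $S_F, S_1, \ldots, S_k$ be pairwise disjoint embedded $\omega$-symplectic spheres representing $F, E_1, \ldots, E_k$. Their preimages $f^{-1}(S_F), f^{-1}(S_1), \ldots, f^{-1}(S_k)$ are again pairwise disjoint embedded spheres; they are $\omega'$-symplectic because $f^*\omega$ restricts on $f^{-1}(S_i)$ to the $f$-pullback of the area form $\omega|_{S_i}$, and they represent the classes $(f^{-1})_* F = F$ and $(f^{-1})_* E_i = E_{\sigma^{-1}(i)}$. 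As $i$ runs over $1, \ldots, k$ the indices $\sigma^{-1}(i)$ run over all of $1, \ldots, k$, so these spheres realize precisely the classes $F, E_1, \ldots, E_k$; hence $\omega'$ is a blowup form.

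This argument is essentially a bookkeeping exercise once the permuting diffeomorphism is in hand, so I do not expect a genuine obstacle. The one point that deserves care is the claim that $f$ can be chosen to act as the identity (not merely to respect the splitting) on $H_2(N)$ and to induce exactly $\sigma$ rather than $\sigma^{-1}$ on the $E_i$; this is where I would lean on the isotopy-extension construction behind the cited connected-sum statement, producing $f$ from an ambient isotopy of $N$ that drags the ordered tuple $(p_1, \ldots, p_k)$ to $(p_{\sigma(1)}, \ldots, p_{\sigma(k)})$ through the (connected) configuration space of $k$ points in $N$ and then lifts to the blowup.
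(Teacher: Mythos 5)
Your proposal is correct and follows the same route the paper takes: the lemma is presented there as a direct consequence of the preceding discussion of the connected-sum construction, namely the existence of a diffeomorphism of $M$ permuting the exceptional classes and fixing $H_2$ of the base, followed by pulling back $\omega$. You have simply written out the bookkeeping (the encoding of $[f^*\omega]$ and the transport of the symplectic spheres witnessing the blowup-form condition) that the paper leaves implicit.
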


\begin{noTitle}
For any manifold $M$,
an {\bf almost complex structure}  is an automorphism $J \colon TM \to TM$
such that $J^2 = -\mathbb{1}$. 
An almost complex structure $J$ on $M$ is {\bf tamed}
by  a symplectic form $\omega$
if $\omega(u,Ju)>0$ 
for all nonzero tangent vectors $u \in TM$.
Let $ \calJ_\tau(M,\omega)$ denote
the set of almost complex structures $J$ that are tamed by $\omega$.
The space $\calJ_\tau(M,\omega)$
is a nonempty contractible open subset of the space of all almost complex structures
\cite[Proposition 2.51]{intro}.

A \textbf{(parametrized) $J$-holomorphic sphere} is a map $f \colon (\CP^1,j) \to (M,J)$  that satisfies
the Cauchy-Riemann equations 
$
df \circ j = J \circ df
$
at every $p \in \CP^1$.
An embedding is a one-to-one immersion which is a homeomorphism 
with its image.
An \textbf{embedded $J$-holomorphic sphere} $C \subset M$ is the image 
of a $J$-holomorphic embedding $f \colon \CP^1 \to M$.
If $J$ is $\omega$-tamed then such a $C$ is an embedded $\omega$-symplectic sphere.
We will refer to the genus zero Gromov Witten invariant (with point constraints) 
$ \GW \colon H_2(M) \to \Z .$
For the precise definition, see~\cite{nsmall}.
Fixing a symplectic form $\omega$, if $\GW(A) \neq 0$, then for generic $\omega$-tamed almost complex structure $J$ there exists a $J$-holomorphic sphere in the class $A$.

Because the first Chern class of a complex vector bundle does not change
under a continuous deformation of the fibrewise complex structure,
the first Chern class $c = c_1(TM,J)$ is the same
for all $J \in \calJ_\tau(M,\omega)$.
It follows from Lemma \ref{lemequiv} that this first Chern class and the Gromov-Witten invariant are the same for all the blowup forms on $M=(\Sigma \times S^2)_{k}$ or $M=(M_{\Sigma})_{k}$. We denote the first Chern class and the Gromov Witten invariant associated to any blowup form on $M$ by $c_{1}(TM)$ and $\GW$.
\end{noTitle}

\begin{Facts} \labell{h2facts}
We will make use of the following facts.  
\begin{enumerate}
\item The classes $F$, $B$, and $E_1,\ldots,E_k$ form a basis of  the homology group 
$H_2((\Sigma \times S^2)_{k})$. 
The classes $F$, $B_{-1}$ ($B_1$), and  $E_1,\ldots,E_k$ form a basis of  $H_2((M_{\Sigma})_{k})$.
Recall that we have classes $B_n= [\sigma_n(\Sigma)]$ 
in  $H_2((\Sigma \times S^2)_{k})$ when $n$ is even
and in $H_2((M_{\Sigma})_{k})$ when $n$ is odd.

\item 
The intersection numbers are given in the following table, where $i\neq j$.

\renewcommand{\arraystretch}{1.2}
\begin{equation}\label{eq:intersection}
\begin{array}{l|r|r|r|r|r|}
& F & B & B_n & E_i & E_j\\ \hline
F & 0 & 1& 1& 0& 0\\ \hline
B & 1 & 0 &\frac{n}{2} & 0& 0\\ \hline
B_n& 1& \frac{n}{2} & n & 0 & 0\\ \hline
E_i & 0 & 0&0& -1&0\\ \hline
E_j & 0&0&0&0&-1\\ \hline
\end{array}
\end{equation}

\noindent Note also that $B_n\cdot B_{-n} = 0$.

\item For the first Chern class $c_{1}(TM)$ of a blowup form on $M$, $M=(\Sigma \times S^2)_{k}$ or $M=(M_{\Sigma})_{k}$, 
$$c_{1}(TM)(F)=2, \,\,\,  c_{1}(TM)(E_{i})=1\,\, \forall 1\leq i \leq k.$$
In $M=(\Sigma \times S^2)_{k}$,
$$c_{1}(TM)(B)=2-2g(\Sigma).$$
In $M=(M_{\Sigma})_{k}$,
$$c_{1}(TM)(B_{-1})=1-2g(\Sigma).$$

\item The genus zero Gromov-Witten invariant $\GW(F)$ with respect to any blowup form is not zero.
\end{enumerate}
 \end{Facts}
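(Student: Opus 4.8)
The plan is to compute each fact on the unblown-up ruled surface and then transport the answer through the $k$ blowups, using that each complex blowup at a point introduces an orthogonal summand $\Z E_i$ with $E_i^2=-1$. For Fact (1), K\"unneth gives $H_2(\Sigma\times S^2;\Z)\cong \Z F\oplus\Z B$, since $H_1(S^2)=0$ kills the mixed term; for the nontrivial bundle the Gysin sequence of $M_\Sigma\to\Sigma$ yields $H_2(M_\Sigma;\Z)\cong \Z F\oplus\Z B_{-1}$, again free of rank $2$. Blowing up at $k$ distinct points adds $E_1,\dots,E_k$, and by the splitting $H_2(M)=H_2(\text{base})\oplus\bigoplus_i\Z E_i$ recalled above, each $E_i$ is orthogonal to the base classes and to the other $E_j$, with $E_i^2=-1$. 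This simultaneously gives the stated bases and the $E$-block of the intersection form in Fact (2).

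For the remaining entries of Fact (2) I would argue geometrically on the base. A fiber meets a section transversally in one point, so $F\cdot B=F\cdot B_n=1$; the fiber has trivial normal bundle, so $F^2=0$; and $B=[\Sigma\times\ast]$ is a zero-section, so $B^2=0$. Writing $B_n=B+\tfrac{n}{2}F$ in the trivial case (resp.\ $B_n=B_{-1}+\tfrac{n+1}{2}F$ in the nontrivial case) and expanding bilinearly then yields $B\cdot B_n=\tfrac n2$, $B_n\cdot B_n=n$, and $B_n\cdot B_{-n}=0$, matching \eqref{eq:intersection}.

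Fact (3) I would reduce to one convenient model by deformation invariance: by Lemma~\ref{lemequiv} all blowup forms lie in a single equivalence class, and $c_1(TM,J)$ is constant on the connected set $\calJ_\tau(M,\omega)$, so it suffices to evaluate $c_1$ for one integrable $J$, namely the one obtained by blowing up the standard ruled K\"ahler structure. On $\Sigma\times S^2$ the splitting $c_1=\pi_1^*c_1(T\Sigma)+\pi_2^*c_1(TS^2)$ evaluates to $2$ on $F$ and to $\chi(\Sigma)=2-2g(\Sigma)$ on $B$; the analogous vertical--horizontal splitting on $M_\Sigma$, together with adjunction on the genus-$g$ section of square $-1$, gives $c_1(B_{-1})=-1+2-2g(\Sigma)=1-2g(\Sigma)$. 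The blowup formula $c_1((\,\cdot\,)_k)=\pi^*c_1-\sum_i\PD(E_i)$ leaves the values on $F,B,B_{-1}$ unchanged (these classes avoid the blowup points) and gives $c_1(E_i)=1$. Equivalently, one may just apply adjunction $c_1\cdot C=C\cdot C+2-2g(C)$ to the embedded $J$-holomorphic representatives supplied by a blowup form: the fiber and exceptional spheres (genus $0$, square $0$ and $-1$) give $2$ and $1$, and the section (genus $g$, square $0$ or $-1$) gives $2-2g(\Sigma)$ or $1-2g(\Sigma)$.

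The genuinely hard statement is Fact (4), the nonvanishing of $\GW(F)$, and this is where I would concentrate the effort. By deformation invariance of the genus-zero Gromov--Witten invariant together with Lemma~\ref{lemequiv}, I may compute $\GW(F)$ for one convenient tamed $J$, and I would take an integrable \emph{fibered} $J$ for which the projection to $\Sigma$ is holomorphic (the fiber class is disjoint from the exceptional locus, so the blowups play no role in this count and I may work on the base ruled surface). Since $c_1(TM)(F)=2$, the invariant $\GW(F)$ counts $J$-holomorphic spheres in class $F$ through one generic point. Positivity of intersections against the section class forces every such sphere to be a fiber, and through a generic point there is exactly one; automatic transversality applies because $c_1(TM)(F)=2\geq 1$ for an embedded sphere, so the moduli space is a single regular point. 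Hence $\GW(F)=\pm 1\neq 0$. The main obstacle is exactly this last, ``hard'' input: arranging the moduli problem so that the naive geometric count of fibers legitimately computes the invariant. The regularity is what makes it work, and it is McDuff's computation of the fiber-class invariant for ruled surfaces that underlies the cited structure results; the soft Facts (1)--(3) feed into it by pinning down $c_1(TM)(F)$ and the intersection form.
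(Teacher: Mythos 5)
Your items (1)--(3) are exactly the standard arguments, and the paper does not spell them out at all (it records them as ``Facts''); your K\"unneth/Gysin computation of the bases, the bilinear expansion $B_n=B+\tfrac n2 F$ (resp.\ $B_n=B_{-1}+\tfrac{n+1}{2}F$) for the intersection table, and the adjunction evaluation of $c_1$ on the embedded representatives are all correct, and the adjunction route is arguably the cleanest way to get item (3). Where you genuinely diverge from the paper is item (4). The paper's proof is one sentence: $\GW(F)\neq 0$ on the minimal ruled surface by McDuff \cite[\S 4]{structure}, and the invariant is then carried to the $k$-fold blowup by the consistency of genus-zero Gromov--Witten invariants under blowup, citing \cite{hu}, \cite{LP} and \cite[Appendix A]{algorithm}. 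You instead propose a direct count on the blown-up manifold with a fibered integrable $J$. That can be made to work, but the phrase ``the blowups play no role'' is the one place you owe more: on $(\Sigma\times S^2)_k$ the fibers over the blown-up points are no longer irreducible representatives of $F$ --- they break as $(F-E_i)+E_i$ --- so the Gromov compactification of the class-$F$ moduli space contains these stable maps, and you must check both that they miss a generic point constraint (they do, as they sweep out only finitely many fibers) and that your non-generic $J$ legitimately computes the invariant (automatic regularity of the embedded fiber via $c_1(TM)(F)=2>0$ in the sense of Hofer--Lizan--Sikorav, plus a dimension count ruling out contributions from the reducible strata). This is precisely the bookkeeping that the blowup formula of \cite{hu} packages once and for all; citing it, as the paper does, is shorter and avoids re-justifying transversality for a non-generic almost complex structure, while your direct argument buys self-containedness at the cost of those compactness details.
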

 The last item follows from  \cite[\S 4]{structure}
and the fact that  Gromov-Witten invariants are
consistent under the natural inclusions 
$$H_2((\Sigma \times S^2)_k) \to H_2((\Sigma \times S^2)_{k+1}) 
\mbox{ and }H_2((M_{\Sigma})_k) \to H_2((M_{\Sigma})_{k+1}),
$$
as in \cite[Theorem~1.4]{hu}, \cite[Proposition 3.5]{LP}, 
and the explanation in \cite[Appendix A]{algorithm}.

%%%%%%%%%%%%%%%%%%%%%%%%%%%%%%%%%%%%%
\subsection*{Exceptional classes}\label{se:exceptional}
%%%%%%%%%%%%%%%%%%%%%%%%%%%%%%%%%%%%%

We compile a number of results about exceptional classes on $k$-fold blowups of 
ruled symplectic four-manifolds.
The first lemma follows from McDuff's ``$C_1$ lemma'' \cite[Lemma 3.1]{structure}, 
Gromov's compactness theorem \cite[1.5.B]{gromovcurves}, the adjunction formula  \cite[Cor.~E.1.7]{nsmall},
and Lemma~\ref{lemequiv}. 
Full details are in \cite[Lemma~2.9]{algorithm}.

\begin{Lemma} \labell{calE and J}
Let  $M=(\Sigma \times S^2)_{k}$ ($M=(M_{\Sigma})_{k}$).
Let $E \in H_2(M)$ be a homology class.
Then the following are equivalent:
\begin{enumerate}
\item[(a)]
There exists a blowup form $\omega$ on $M$
such that the class $E$ is represented by an embedded $\omega$-symplectic
sphere with self intersection $-1$.
\item[(b)]
\begin{enumerate}
   \item[(i)]
   $c_1(TM)(E) = 1$;
   \item[(ii)]
$E \cdot E = -1$; and
   \item[(iii)]
the genus zero Gromov-Witten invariant $\GW(E) \neq 0$.
\end{enumerate}
\item[(c)]
For every blowup form $\omega'$ on $M$, 
the class $E$ is represented by an embedded $\omega'$-symplectic sphere 
with self intersection $-1$.
\end{enumerate}
\end{Lemma}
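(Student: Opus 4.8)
\emph{Proof plan.} The plan is to establish the cyclic chain of implications $(a)\Rightarrow(b)\Rightarrow(c)\Rightarrow(a)$. The implication $(c)\Rightarrow(a)$ is immediate, since by Lemma~\ref{lemequiv} the blowup forms on $M$ constitute a nonempty equivalence class, so ``for every blowup form'' entails ``for some blowup form.'' The real content is the passage from a single symplectic representative to the homological and enumerative conditions in $(b)$, and then back from $(b)$ to a representative for \emph{every} blowup form. Throughout I would use that $c_1(TM)$ and $\GW$ are the same for all blowup forms, a consequence of Lemma~\ref{lemequiv} already recorded above.

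For $(a)\Rightarrow(b)$, suppose $E$ is represented by an embedded $\omega$-symplectic sphere $C$. Condition $(b)(ii)$ is just the self-intersection of $C$. For $(b)(i)$, I would choose $J\in\calJ_\tau(M,\omega)$ making $C$ holomorphic; since $C$ is embedded of genus $0$, the adjunction formula \cite[Cor.~E.1.7]{nsmall} yields $c_1(TM)(E)=E\cdot E+2=1$. For $(b)(iii)$, I would argue that $C$ is the unique $J$-holomorphic representative of $E$: by positivity of intersections \cite[Proposition 2.4.4]{nsmall} two distinct $J$-holomorphic curves in $E$ would intersect non-negatively, contradicting $E\cdot E=-1$, while a cusp-curve degeneration is excluded by the first Chern number count in the next paragraph. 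An embedded sphere with $c_1=1$ (normal bundle degree $-1$) is regular by automatic transversality in dimension four, so it persists to generic $J$ and contributes $\pm 1$ to the signed count, giving $\GW(E)\neq 0$.

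For $(b)\Rightarrow(c)$, I would fix an arbitrary blowup form $\omega'$ and a generic $J\in\calJ_\tau(M,\omega')$. Because $\GW(E)\neq 0$ and the moduli space in class $E$ has virtual dimension $2\bigl(c_1(TM)(E)-1\bigr)=0$, there is a $J$-holomorphic stable map representing $E$; by Gromov compactness \cite[1.5.B]{gromovcurves} its image is a union of simple $J$-spheres, so $E=\sum_i m_i A_i$ with $m_i\geq 1$ and each $A_i$ simple. Genericity of $J$ together with McDuff's $C_1$ lemma \cite[Lemma 3.1]{structure} forces every nonconstant simple $J$-sphere component to satisfy $c_1(TM)(A_i)\geq 1$, whence
\[
1=c_1(TM)(E)=\sum_i m_i\, c_1(TM)(A_i)\geq \sum_i m_i\geq 1,
\]
which rigidifies the configuration to a single component with $m_1=1$, so $E=A_1$ is simple with $c_1(TM)(E)=1$. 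The adjunction formula then reads $1=E\cdot E+2-2\delta=1-2\delta$, forcing the singularity count $\delta=0$; thus $E$ is represented by an embedded $J$-holomorphic sphere, which is $\omega'$-symplectic because $J$ is tamed by $\omega'$. As $\omega'$ was an arbitrary blowup form, this establishes $(c)$.

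The main obstacle is precisely the degeneration analysis inside $(b)\Rightarrow(c)$ (and the identical step hidden in $(a)\Rightarrow(b)(iii)$): one must guarantee that the representative produced by the nonvanishing of $\GW$ is a \emph{single somewhere-injective} sphere before invoking adjunction. This is exactly where the combination of Gromov compactness with the $C_1$ lemma does the work, by controlling the first Chern numbers of the components of a limiting cusp curve, so that the arithmetic $\sum_i m_i\, c_1(TM)(A_i)=1$ collapses the configuration. The residual delicate point is somewhere-injectivity, equivalently ruling out multiple covers, which here is automatic because $c_1(TM)(E)=1$ is indivisible.
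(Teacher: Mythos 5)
Your proposal is correct and follows essentially the same route as the paper, which does not write out the argument but attributes the lemma precisely to the ingredients you use -- McDuff's $C_1$ lemma, Gromov compactness, the adjunction formula, and Lemma~\ref{lemequiv} (which makes $c_1(TM)$ and $\GW$ independent of the choice of blowup form) -- deferring full details to the genus-zero analogue in \cite[Lemma~2.9]{algorithm}. Your cycle $(a)\Rightarrow(b)\Rightarrow(c)\Rightarrow(a)$, with the $C_1$ lemma collapsing the Gromov limit to a single simple component and adjunction upgrading it to an embedding, is exactly that standard argument.
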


Lemma \ref{calE and J} guarantees that the set of exceptional classes is independent of the choice of a blowup form $\omega$ on ${M}$, and for every exceptional class $E$, we have $c_{1}(TM)(E)=1$ and $\GW(E) \neq 0$, i.e., for generic $\omega$-tamed almost complex structure $J$ there exists a $J$-holomorphic sphere in the class $E$.

The next lemma guarantees the existence of a $J$-holomorphic sphere in exceptional classes for \emph{every} $\omega$-tamed $J$ 
in the case of a single blowup of a ruled symplectic manifold over a Riemann surface
of positive genus. It follows from the adjunction formula \cite[Cor.~E.1.7]{nsmall}, the positivity of intersections \cite[Proposition 2.4.4]{nsmall}, and Gromov's compactness theorem  \cite[1.5.B]{gromovcurves}.
This is analogous to a result of Pinsonnault in the genus $0$ case, see
\cite[Lemma~2.2]{pinso2}, 
\cite[Lemma~3.24]{algorithm}.
We prove the positive genus case in Appendix~\ref{appendix1}.

\begin{Lemma} \labell{martin}
Let  $M=(\Sigma \times S^2)_{1}$ or $M=(M_{\Sigma})_{1}$.
Assume that $g=g(\Sigma)>0$.
Let $\omega$ be a blowup form on  ${M}$. 
Then for every  $J \in \calJ_\tau(M,\omega)$  there exists an embedded
$J$-holomorphic sphere in the class $E_1$ and  there exists an embedded
$J$-holomorphic sphere in the class $F-E_{1}$.
\end{Lemma}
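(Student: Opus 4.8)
\noindent\emph{Proof proposal.}
The plan is to realize the desired spheres, for an arbitrary $J\in\calJ_\tau(M,\omega)$, as Gromov limits of the embedded spheres that are available for generic almost complex structures, and then to rule out bubbling. By the identification \eqref{Biran} with $k=1$ we have $\calE(M)=\{E_1,\,F-E_1\}$, so both target classes are exceptional; by Lemma~\ref{calE and J} each of $E_1$ and $F-E_1$ satisfies $c_1(TM)(\cdot)=1$, has self-intersection $-1$, has nonzero genus-zero Gromov--Witten invariant, and (being a basis element, resp.\ $F-E_1$) is primitive and of positive $\omega$-area. Fix $A\in\{E_1,\,F-E_1\}$ and choose generic $J_n\to J$; for each $n$, Lemma~\ref{calE and J} provides an embedded $J_n$-holomorphic sphere $C_n$ in the class $A$. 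By Gromov's compactness theorem \cite[1.5.B]{gromovcurves}, after passing to a subsequence the $C_n$ converge to a connected $J$-holomorphic cusp curve whose components are (underlying) simple $J$-holomorphic spheres $Z_1,\dots,Z_r$ in classes $A_1,\dots,A_r$, with $\sum_i m_i A_i=A$ for some multiplicities $m_i\ge 1$. It then suffices to show $r=1$ and $m_1=1$: a single simple sphere in a class with self-intersection $c_1(TM)(A)-2=-1$ has vanishing adjunction defect, hence is embedded by the adjunction formula \cite[Cor.~E.1.7]{nsmall}.

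The first real step is to prove that every component is \emph{vertical}, i.e.\ $A_i\cdot F=0$. Write $A_i=a_iF+b_iB+c_iE_1$ (resp.\ with $B_{-1}$ replacing $B$), so that $A_i\cdot F=b_i$ by \eqref{eq:intersection}. Each $A_i$ has $\omega(A_i)>0$ and, since $Z_i$ is a simple $J$-holomorphic sphere, the adjunction formula gives $c_1(TM)(A_i)\le A_i\cdot A_i+2$. This is where the positive-genus hypothesis enters. By McDuff's structure theorem \cite{structure} the fibre class $F$ persists for \emph{every} tamed $J$ (the ruling survives, the fibre through the blown-up point splitting into representatives of $E_1$ and $F-E_1$); intersecting $Z_i$ with a fibre using positivity of intersections \cite[Proposition~2.4.4]{nsmall} then yields $b_i=A_i\cdot F\ge 0$. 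Comparing $B$-coefficients in $\sum_i m_i A_i=A$, which vanishes for both $A=E_1$ and $A=F-E_1$, forces every $b_i=0$. I expect this vertical reduction to be the main obstacle: in genus zero the section class is itself spherical and this step fails, which is exactly why Pinsonnault's genus-zero argument \cite[Lemma~2.2]{pinso2} is more delicate, whereas $c_1(TM)(B)=2-2g\le 0$ (resp.\ $c_1(TM)(B_{-1})=1-2g\le-1$) is what kills the base direction here.

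It remains to run a short enumeration and do the homology bookkeeping. For a vertical class $aF+cE_1$ one has $c_1(TM)=2a+c$ and self-intersection $-c^2$, so adjunction reads $c^2+c+2a\le 2$; combined with $\omega(aF+cE_1)>0$ and the effectivity of $E_1$ and $F-E_1$ (equivalently $0<\delta_1<\lambda_F$), a direct check leaves only $F$, $E_1$, and $F-E_1$ as possible component classes. Writing $p,q,s\ge 0$ for the total multiplicities of $F$, $E_1$, $F-E_1$, the identity $\sum_i m_iA_i=A$ becomes $(p+s)F+(q-s)E_1=A$; for $A=E_1$ this forces $p=s=0$, $q=1$, and for $A=F-E_1$ it forces $p=0$, $s=1$, $q=0$. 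In either case the cusp curve is a single simple sphere in class $A$, hence embedded, as required. The argument is uniform across the two bundles: for $M=(M_\Sigma)_1$ one repeats it verbatim with $B$ replaced by $B_{-1}$, the vertical classes and their invariants being unchanged. Thus the only genuinely geometric input beyond routine intersection theory is the persistence of the $F$-ruling for non-generic $J$, which supplies the vertical reduction and drives the whole proof.
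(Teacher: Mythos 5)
Your overall skeleton — Gromov limit of generic representatives, elimination of the section direction, enumeration of vertical classes, and the final multiplicity bookkeeping — matches the paper's, and your last two steps (the check that the only simple spherical vertical classes of positive area are $F$, $E_1$, $F-E_1$, and the conclusion $N=1$) are correct and essentially identical to the paper's. The gap is in the step you yourself identify as the crux: the ``vertical reduction'' $A_i\cdot F\ge 0$. You derive it by intersecting $Z_i$ with an honest $J$-holomorphic fibre, citing McDuff's structure theorem for the persistence of the ruling for \emph{every} tamed $J$ on the blowup. That is not what \cite{structure} gives you: Proposition 4.1 there produces a fibration compatible with the symplectic form on the blown-down (minimal) manifold, not a $J$-holomorphic ruling of $(\Sigma\times S^2)_1$ for an arbitrary non-generic tamed $J$. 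What Gromov compactness and $\GW(F)\neq 0$ actually provide for your fixed $J$ is a stable (cusp) representative of $F$ through every point; positivity of intersections then fails exactly when $Z_i$ occurs as a component of that representative (contributing $A_i\cdot A_i<0$), and ruling this out requires controlling which classes can appear as components of degenerations of $F$ — which is the same kind of statement you are trying to prove. If instead you try to make the step rigorous via the paper's Lemma~\ref{positivity of intersections corollary} with $B=F$, you are missing its hypothesis $c_1(TM)(A_i)\ge 1$, which can fail precisely because $c_1(TM)(B)=2-2g\le 0$.

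The paper closes this gap with two inputs you do not use. First, Pinsonnault's \cite[Lemma 1.2]{pinso} gives a simple $J$-holomorphic sphere in the \emph{minimal-area} exceptional class $\hat E$ for the given $J$ (not just for generic $J$), whence $A\cdot\hat E\ge 0$ for any other simple spherical class by positivity of intersections; this also hands them the embedded sphere in the class $\hat E$ for free, so only $F-\hat E$ needs the limiting argument. Second, to get the section coefficient nonnegative they split into cases on $c_1(TM)(A)$: when $c_1(TM)(A)\ge 1$ they invoke Lemma~\ref{positivity of intersections corollary} with $B=F$ to get $A\cdot F\ge 0$, and when $c_1(TM)(A)\le 0$ they derive a contradiction directly from the adjunction inequality together with area positivity and $A\cdot\hat E\ge 0$ — this second case is where the hypothesis $g>0$ genuinely enters, through $c_1(TM)(\hat B)\le 0$. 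You should replace your appeal to a persistent ruling with this case analysis (or an equivalent argument); as written, the step that ``drives the whole proof'' is unsupported.
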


The following lemma is a consequence of \eqref{Biran} and Facts \ref{h2facts} that we will need in the proof of the uniqueness theorem. 
\begin{Lemma}\labell{keepsF}
Let  $M=(\Sigma \times S^2)_{k}$ or $M=(M_{\Sigma})_{k}$.
Assume that $g(\Sigma)>0$.
\begin{itemize}
\item When $k=0$, the class $F$ is the unique class in $H_{2}(M;\Z)$ satisfying
\begin{enumerate}
\item Its symplectic area with respect to a symplectic ruling of the $S^2$-bundle is positive;
\item Its self intersection number is zero; and
\item Its coupling with the first Chern class $c_{1}(TM)$ equals two.
\end{enumerate}
\item When $k \geq 1$, the class $F$ is  the unique class in $H_{2}(M;\Z)$ satisfying
\begin{enumerate}
\item The intersection number of the class with every class in $\calE(M)$ is zero; and
\item Its coupling with the first Chern class $c_{1}(TM)$ equals two.
\end{enumerate}
\end{itemize}
\end{Lemma}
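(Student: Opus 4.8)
The plan is to first check that the fiber class $F$ itself satisfies the listed conditions, using the intersection table \eqref{eq:intersection}, the Chern-number values in Facts~\ref{h2facts}(3), and the identification \eqref{Biran} of $\calE(M)$; then to prove uniqueness by expanding an arbitrary candidate class in the homology basis of Facts~\ref{h2facts}(1) and forcing its coefficients to vanish using the numerical constraints. That $F$ works is immediate: $F \cdot F = 0$, $c_1(TM)(F) = 2$, the fiber has positive area with respect to a ruling, and $F \cdot E_i = F \cdot (F - E_i) = 0$ for every $i$, so $F$ meets whichever of the three conditions is relevant to each case.

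For $k \geq 1$ I would write $A = aF + bB + \sum_{i=1}^{k} c_i E_i$ in the trivial case (and $A = aF + bB_{-1} + \sum_{i=1}^{k} c_i E_i$ in the non-trivial case). Since both $E_i$ and $F - E_i$ lie in $\calE(M)$ by \eqref{Biran}, condition (1) gives $A \cdot E_i = 0$ and hence $A \cdot F = 0$ for all $i$. Reading intersection numbers off \eqref{eq:intersection}, the first batch forces $c_j = -A \cdot E_j = 0$, while $A \cdot F$ then reduces to the coefficient $b$, so $b = 0$. Thus $A = aF$, and condition (2), namely $c_1(TM)(aF) = 2a = 2$, gives $a = 1$. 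Note that no positivity hypothesis is needed in this range.

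For $k = 0$ the computation is the same linear algebra but with one genuinely delicate branch. In the trivial case $A = aF + bB$ has $A \cdot A = 2ab$, so condition (2) forces $a = 0$ or $b = 0$; the branch $b = 0$ combines with condition (3) to give $A = F$, while the branch $a = 0$ yields $(2 - 2g)b = 2$, that is $b = 1/(1-g)$, which is an integer precisely when $g = 2$, producing the spurious class $A = -B$. This is exactly where condition (1) is essential: the section class $B$ has positive symplectic area, so $-B$ has negative area and is excluded. In the non-trivial case $A = aF + bB_{-1}$ gives $A \cdot A = b(2a - b)$, and the surviving branch $b = 2a$ leads to $2a(2 - 2g) = 2$, which has no integer solution, so $F$ is forced without invoking positivity. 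The main obstacle is therefore not the bookkeeping but recognizing that in the genus-two trivial-bundle case the self-intersection and Chern-number conditions alone fail to isolate $F$, and that the positivity of the symplectic area is precisely the hypothesis that removes the leftover class $-B$.
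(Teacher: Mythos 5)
Your proposal is correct and follows essentially the same route as the paper: expand a candidate class in the homology basis, use the intersection table and the Chern-number values to force the coefficients, and observe that in the $k=0$ trivial-bundle case the self-intersection and Chern conditions alone leave the residual class $-B$ at $g=2$, which only the positive-area hypothesis eliminates. The only cosmetic difference is that you parametrize the non-trivial bundle by $B_{-1}$ where the paper uses $B_1$, so your ``no integer solution'' parity argument replaces the paper's ``$2q=1$'' contradiction, but these are the same computation in different coordinates.
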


\begin{proof}
Denote $\hat{B}=B$ if  $M=(\Sigma \times S^2)_{k}$ and $\hat{B}=B_{1}$ if $M=(M_{\Sigma})_{k}$.
\begin{itemize}
\item In the case $k=0$, a class $A \in H_{2}(M;\Z)$ is written as $A=p\hat{B}+qF$ for $p,q \in \Z$. Assume that $A$ satisfies the three conditions. Since $A \cdot A=0$, we get, using Facts \ref{h2facts}, that  if $M=\Sigma \times S^2$ then $2pq=0$, i.e., either $p=0$ or $q=0$, and  if $M=M_{\Sigma}$ then $0=p^2+2pq=p(p+2q)$, i.e.,  either $p=0$ or $p+2q=0$. 
If $M=\Sigma \times S^2$ and $q=0$ then by the third property of $A$ we have  $2=c_{1}(TM)(A)=(2-2g)p$. Similarly, if  $M=M_{\Sigma}$ and $p+2q=0$ then $2=c_{1}(TM)A=(2-2g)p+p+2q=(2-2g)p$. Since $g$ is a positive integer, the equality $2=(2-2g)p$ holds only if $g=2$ and $p=-1$, however, if $M=\Sigma \times S^2$ this (and $q=0$) yield that  $\omega(A)= -\omega(\hat{B})<0$ contradicting the first condition; if $M=M_{\Sigma}$ this (and $p+2q=0$) yield that $2q=1$ contradicting the fact that $q$ is an integer.
 We conclude that $p=0$ hence $2q=c_{1}(TM)(A)=2$, i.e., $q=1$.

\item Assume $k \geq 1$. Let $A \in H_{2}(M;\Z)$. By item (1) in Facts \ref{h2facts}, $A=p\hat{B}+qF-\sum_{i=1}^{k}r_i E_i$ for $p,q \in \Z$. If $A$ satisfies the first condition, then, by \eqref{Biran} and item(2) in Facts \ref{h2facts}, for all $1\leq i \leq k$ we have $r_i=A \cdot E_{i}=0$ hence $A=p\hat{B}+qF$, so $p=A \cdot (F-E_{1})=0$. Since $2=c_{1}(TM)(A)=qc_{1}(TM)(F)=2q$, we get $q=1$. 

On the other hand, by \eqref{Biran} and Facts \ref{h2facts}, the class $F$ satisfies the conditions.
\end{itemize}
\end{proof}

%%%%%%%%%%%%%%%%%%%%%%%%
\subsection*{Necessary and sufficient conditions for a vector to encode a blowup form}
%%%%%%%%%%%%%%%%%%%%%%%%

We now turn to the question of when  a vector 
$(\lambda_F,\lambda_B;\delta_{1},\ldots,\delta_{k})\in \R^{2+k}$
encodes the cohomology class of a blowup form.

\begin{Lemma} \labell{poslemma}
Assume that $\Sigma$ is of positive genus. Let $k \geq 0$.
A vector $(\lambda_F,\lambda_B ; \delta_1 , \ldots , \delta_k)$
encodes the cohomology class of a blowup form $\omega$ on $M=(\Sigma \times S^2)_{k}$ ($M=(M_{\Sigma})_{k})$ if and only if
\begin{itemize}
\item the numbers $\lambda_F,\lambda_B;  \delta_1 , \ldots , \delta_k$ are positive;
\item $\lambda_F > \delta_i $ for all $i$; and 
\item the volume inequality
$\lambda_F \lambda_B - \half(\delta_1^2 + \ldots + \delta_k^2) > 0$ holds.
\end{itemize}
\end{Lemma}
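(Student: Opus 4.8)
The plan is to reduce the entire statement to linear algebra together with a single application of Li--Liu's description of the symplectic cone. Write $a=\frac{1}{2\pi}\Omega\in H^2(M;\R)$ for the class encoded by the vector. First I would record the intersection-theoretic meaning of the encoding: using the bases and intersection numbers of Facts~\ref{h2facts}, I would compute $a\cdot F=\lambda_F$, $a\cdot E_i=\delta_i$, $a\cdot(F-E_i)=\lambda_F-\delta_i$, and
\[
a\cdot a = 2\lambda_F\lambda_B-\sum_{i=1}^k\delta_i^2,
\]
so that $a\cdot a>0$ is equivalent to positivity of the symplectic volume $\int_M\omega\wedge\omega$, which is exactly the volume inequality. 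The purpose of the $M_{\Sigma}$ normalization of $\lambda_B$ is precisely to make this self-intersection formula agree with the $\Sigma\times S^2$ case, which I would verify by substituting $B_{-1}=B_1-F$ and using the table \eqref{eq:intersection}.

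Next I would treat necessity. If $\omega$ is a blowup form with $[\omega]=\Omega$, then by Definition~\ref{def:blowup} there are embedded $\omega$-symplectic spheres in $F$ and in each $E_i$; since a symplectic sphere has positive area, $\lambda_F=\frac{1}{2\pi}\int_F\omega>0$ and $\delta_i>0$. By Biran's identification \eqref{Biran}, each class $F-E_i$ is exceptional, hence by Lemma~\ref{calE and J} it is represented by an embedded $\omega$-symplectic sphere, giving $\lambda_F-\delta_i>0$. Positivity of $\int_M\omega\wedge\omega$ gives $a\cdot a>0$, i.e.\ the volume inequality, and together with $\lambda_F>0$ this forces $\lambda_B>0$. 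This disposes of the ``only if'' direction for every $k\geq 0$.

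For sufficiency I would invoke Li--Liu's characterization of the symplectic cone \cite{Li-Liu}: for the canonical class determined by the blowup forms (equivalently, by the $c_1(TM)$ of Facts~\ref{h2facts}), a class lies in the symplectic cone of that deformation class exactly when it has positive square, lies in the forward component of the positive cone, and pairs positively with every class in $\calE(M)$. Using \eqref{Biran}, the condition $a\cdot E>0$ for all $E\in\calE(M)$ is literally $\delta_i>0$ and $\lambda_F>\delta_i$ for all $i$. When $k\geq 1$, these together with $a\cdot a>0$ force $\lambda_F>\delta_1>0$ and hence $\lambda_B>0$, placing $a$ in the forward component; when $k=0$ we have $\calE(M)=\emptyset$, and the hypotheses $\lambda_F,\lambda_B>0$ say exactly that $a$ lies in the forward positive cone of the ruled surface (with the volume inequality then automatic). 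Thus the three bullet conditions are equivalent to the Li--Liu conditions, so $a$ is realized by a symplectic form in the blowup-form deformation class; by Lemma~\ref{lemequiv} this form is a blowup form, completing the ``if'' direction.

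The main obstacle I anticipate is the bookkeeping around the correct component and canonical class, compounded by a coefficient mismatch: Li and Liu state their result for integral homology classes, whereas here $a$ is real. I would bridge this either by the real-coefficient form of the symplectic cone theorem, or by approximating $a$ by rational classes lying in the same open cone and rescaling, using that all of the conditions are open and scale-invariant. The second, more conceptual subtlety is ensuring that the form produced is genuinely a \emph{blowup} form, with the required disjoint embedded spheres, rather than merely a symplectic form in the class; this is exactly what Lemma~\ref{lemequiv} and the blowup-form-independence of $\calE(M)$ guarantee, so once $a$ is known to lie in the relevant component the disjoint embedded symplectic spheres in $F,E_1,\ldots,E_k$ come for free.
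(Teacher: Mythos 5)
Your proposal is correct and follows essentially the same route as the paper: necessity from positivity of the areas of the exceptional spheres in $\calE(M)=\{E_i,F-E_i\}$ and of the symplectic volume, and sufficiency from Li--Liu's description of the symplectic cone for the standard canonical class together with Biran's identification \eqref{Biran} and Lemma~\ref{lemequiv}. Your extra care about the self-intersection computation, the forward component for $k=0$, and real versus integral classes only makes explicit points the paper's proof leaves implicit.
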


\begin{proof}
If the vector $(\lambda_F,\lambda_B ; \delta_1 , \ldots , \delta_k)$ encodes the
cohomology class of a blowup form $\omega$, then
the symplectic areas of the embedded $\omega$-symplectic exceptional spheres
representing the classes in $\calE(M)=\{E_1,\ldots,E_k, F-E_1,\ldots,F-E_k\}$
and the symplectic volume of $M$ are necessarily positive.  This establishes the
fact that $\delta_i>0$, $\lambda_F>\delta_i$ and the volume inequality.
The definition of a blowup form requires $\lambda_F>0$, including for the case $k=0$. 
Finally becuase the symplectic volume and $\lambda_F$ are positive, 
$\lambda_B>0$ as well.

The fact that the listed conditions on the vector 
$(\lambda_F,\lambda_B ; \delta_1 , \ldots , \delta_k)$
are sufficient to guarantee the existence of a blowup form follows from 
Li and Liu's characterization of symplectic forms with a standard canonical class on blowups of ruled 
symplectic manifolds \cite[Theorem~3]{Li-Liu}.  They prove that for any
symplectic $4$-manifold with $b^+=1$ (which includes all blowups of ruled surfaces), 
the symplectic cone of symplectic forms for which the first Chern class is the same as that of a blowup form (i.e., encoded as in item (3) of Facts \ref{h2facts}) is described by
$$
\mathcal{C}_{0} = \left\{ \alpha\in H^2(M;\R)\ \Big|\  
\begin{array}{c} \alpha\smile\alpha>0 \mbox{ \bf and }\\
\alpha(E)>0\ \forall \ 
 \text{ ``exceptional class" } E \text{ s.t. } c_{1}(TM)(E)=1\end{array} \right\}.
$$
 By ``exceptional class"
Li and Liu refer to a homology class $E$ that is represented by a smoothly embedded
sphere with self intersection $-1$.  The facts that 
\begin{itemize}
\item a class $E$ is ``exceptional" with  $c_{1}(TM)(E)=1$ if and only if $E \in \calE(M)$; 
and 
\item every symplectic form
with first Chern class as that of a blowup form is a blowup form
\end{itemize}
follow from results that are given in 
\cite[Lemma 3.5 Part 2]{Li-Liu}, 
and \cite[Theorem A]{liliu-ruled}; see also the explanation in \cite[Section 6]{blowups}.
Thus, any cohomology class encoded by a vector 
$(\lambda_F,\lambda_B ; \delta_1 , \ldots , \delta_k)$
satisfying the conditions of the lemma is  in $\mathcal{C}_{0}$ and so is 
the cohomology class of some blowup form $\omega$.  We will  call
$\mathcal{C}_{0}$ the \textbf{standard symplectic cone}.
\end{proof}

\begin{Remark}\label{rem:G width}
Note that  Lemma~\ref{poslemma} may be used to determine the {\bf Gromov width} of a symplectic 
$4$-manifold $M=(\Sigma \times S^2)_{k}$ or $M=(M_{\Sigma})_{k}$ with $g(\Sigma)>0$ and $k\geq 0$.
To do so, we must determine for which positive numbers $\alpha$ there is a 
symplectic embedding of a ball of capacity $\alpha$ into $M$.  
This is equivalent to determining if there is a symplectic blowup of $M$ of size $\alpha$.
We start with a vector $(\lambda_F,\lambda_B ; \delta_1, \ldots , \delta_k)\in\R^{k+2}_{>0}$ 
that encodes a blowup form on $M$.
Lemma~\ref{poslemma} tells us that there exists a blowup of $M$ of size $\alpha$ 
so long as $0<\alpha<\lambda_F$ and the volume inequality
$$
 \lambda_F \lambda_B - \half \left( \alpha^2 + \sum_{i=1}^{k}{\delta_i}^2\right)>0.
$$
is satisfied. The Gromov width of $M$ is the $\mathrm{sup}$ of all such $\alpha$.
Hence, the Gromov width will be determined by $\lambda_F$ and the volume of $M$.
\end{Remark}

\begin{Remark}\label{rem:packing}
Lemma~\ref{poslemma} may also be used to determine the {\bf packing number}
of a symplectic 
$4$-manifold $M=(\Sigma \times S^2)_{k}$ or $M=(M_{\Sigma})_{k}$ with $g(\Sigma)>0$ and $k\geq 0$.
Following \cite[Definition~2.C]{biran:packing}, the packing number of $(M,\omega)$ is
$$
P_{(M,\omega)} = 1+\max\{ N\in\N \ |\ \mbox{there does not exist a full packing of $M$ by $N$ equal balls}\}.
$$
For such $M$ with a blowup form encoded by the vector 
$(\lambda_F,\lambda_B ; \delta_1, \ldots , \delta_k)$, 
it is straight forward to show that
$$
P_{(M,\omega)} = \left\lceil \frac{2\cdot \left( \lambda_F \lambda_B - \half  \sum_{i=1}^{k}{\delta_i}^2
\right)}{\lambda_F^2} \right\rceil.
$$
In the special case when $k=0$ and $M=S^2\times\Sigma$, this is Biran's result
\cite[Corollary~5.C]{biran:packing}.
\end{Remark}

\subsection*{Symplectic blow down}
By Weinstein's tubular neighborhood theorem \cite{W}, a neighborhood of an exceptional sphere is symplectomorphic
to a neighborhood of the exceptional divisor in a standard blowup of $\C^2$.
We can then {\bf blow down}  along $C$ and get a symplectic manifold 
whose symplectic blowup is naturally isomorphic to $(M,\omega)$.

\begin{Lemma}[Uniqueness of blow-downs] \labell{bdownunique}
Let $(M,\omega)$ be a closed  symplectic four-manifold. 
Let $C_1$ and $C_2$ be embedded $\omega$-symplectic spheres of self
intersection $-1$. Assume that $C_1$ and $C_2$ are in the same homology
class. Let $(N_1,\omega_1)$ and $(N_2,\omega_2)$ be  symplectic
four-manifolds that are obtained by blowing down $(M,\omega)$ along
$C_1$ and $C_2$, respectively. Then there is a symplectomorphism between
$(N_1,\omega_1)$ and $(N_2,\omega_2)$ that induces the identity map on
the second homology 
with respect to the decompositions\newline $H_{2}(M)=H_{2}(N_i) \oplus \Z [C_i]$.
\end{Lemma}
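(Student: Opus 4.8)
The plan is to reduce the statement to the construction of a single symplectomorphism $\Psi$ of $(M,\omega)$ that is smoothly isotopic to the identity through symplectomorphisms and that carries $C_1$ to $C_2$. Granting such a $\Psi$, I will show that it descends to the two blow-downs and that the map it induces on $H_2$ is the identity with respect to the decompositions $H_2(M)=H_2(N_i)\oplus\Z[C_i]$. Write $e:=[C_1]=[C_2]$; since each $C_i$ is an embedded symplectic sphere of self-intersection $-1$, the adjunction formula \cite[Cor.~E.1.7]{nsmall} gives $c_1(TM)(e)=1$.

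First I would connect $C_1$ and $C_2$ by a symplectic isotopy using $J$-holomorphic techniques. Choose $\omega$-tamed almost complex structures $J_0,J_1\in\calJ_\tau(M,\omega)$ for which $C_1$, respectively $C_2$, is holomorphic (extend a compatible structure from a neighborhood of the sphere), and join them by a path $(J_t)_{t\in[0,1]}$ in the contractible space $\calJ_\tau(M,\omega)$. For each $t$ the class $e$ has a \emph{unique} embedded $J_t$-holomorphic representative $S_t$: the existence and the absence of bubbling or degeneration follow from Gromov's compactness theorem \cite[1.5.B]{gromovcurves} together with the adjunction inequality and positivity of intersections \cite[Proposition 2.4.4]{nsmall}, which also force embeddedness and regularity; uniqueness holds because two distinct $J_t$-holomorphic representatives would meet with non-negative local intersection, contradicting $e\cdot e=-1$. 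The resulting family $(S_t)$ is then a smooth isotopy of embedded symplectic spheres, all in the class $e$, with $S_0=C_1$ and $S_1=C_2$.

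Next I would promote this isotopy of submanifolds to an ambient one. Applying the symplectic isotopy extension theorem through Weinstein neighborhoods of the $S_t$ (cf.\ \cite{intro}) produces a family of symplectomorphisms $\Psi_t$ of $(M,\omega)$ with $\Psi_0=\id$ and $\Psi_t(C_1)=S_t$. In particular $\Psi:=\Psi_1$ satisfies $\Psi(C_1)=C_2$ and is isotopic to the identity, so $\Psi_*=\id$ on $H_2(M)$.

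Finally I would descend $\Psi$ to the blow-downs. By the uniqueness of Weinstein neighborhoods of exceptional spheres \cite{W} (all symplectomorphic to a neighborhood of the exceptional divisor in the standard blow-up of $\C^2$), after post-composing $\Psi$ with a symplectomorphism supported near $C_2$ I may assume it carries the neighborhood used to blow down $C_1$ onto the one used to blow down $C_2$, matching the standard models. Then $\Psi$ restricts to a symplectomorphism $M\ssminus C_1\to M\ssminus C_2$ that extends across the glued-in balls to a symplectomorphism $\bar\Psi\colon(N_1,\omega_1)\to(N_2,\omega_2)$. Under the blow-down projections $p_i\colon H_2(M)\to H_2(N_i)$ with kernel $\Z[C_i]$, the relation $p_2\circ\Psi_*=\bar\Psi_*\circ p_1$ together with $\Psi_*=\id$ and $[C_1]=[C_2]$ shows that $\bar\Psi_*$ restricts to the identity on the common complement of $\Z[C_1]=\Z[C_2]$; hence $\bar\Psi$ induces the identity on second homology, as required. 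I expect the main obstacle to be the first step: showing that $e$ carries a unique embedded $J_t$-holomorphic sphere for \emph{every} $t$ and that these fit together into a smooth isotopy, since this is precisely where Gromov compactness, positivity of intersections, and the adjunction inequality must be combined to exclude bubbling and to keep embeddedness along the whole path.
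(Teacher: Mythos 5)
The paper does not prove this lemma itself; it defers to \cite[Lemma A.1]{ke}. Your overall strategy --- isotope $C_1$ to $C_2$ through embedded symplectic spheres in the class $e$, extend to an ambient symplectic isotopy, and descend to the blow-downs via uniqueness of Weinstein neighborhoods of exceptional spheres --- is the standard route to this statement, and the homological bookkeeping at the end is fine.

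The genuine gap is exactly where you suspected it: the assertion that for \emph{every} $t$ the class $e$ has an embedded $J_t$-holomorphic representative. Gromov compactness together with the adjunction inequality and positivity of intersections does not exclude degeneration into a cusp curve. The lemma concerns an arbitrary exceptional sphere in an arbitrary closed symplectic four-manifold, and in that generality an exceptional class is represented only for \emph{generic} tamed $J$ --- this is precisely what the paper's Lemma \ref{calE and J} asserts, and why Lemma \ref{martin} (existence for \emph{every} tamed $J$) requires a separate proof in a special situation. Concretely, in a two-point blow-up of $\CP^2$ there are tamed $J$ for which $E_1$ is represented only by the cusp curve with components in $E_1-E_2$ and $E_2$; both components are embedded spheres satisfying adjunction with equality, so nothing in your argument rules this out. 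The standard repair is to choose the path $(J_t)$ generically \emph{as a path}: writing a putative limit as $e=\sum m_iA_i$ with $A_i$ simple and $m_i\ge 1$, the identity $1=c_1(TM)(e)=\sum m_i\,c_1(TM)(A_i)$ shows that any nontrivial degeneration must contain a component class $A$ with $c_1(TM)(A)\le 0$; such a class has index $2\bigl(c_1(TM)(A)-1\bigr)\le -2$, so the set of $J$ admitting a simple holomorphic sphere in it has codimension at least two and is avoided by a generic path (an energy bound reduces to finitely many such classes). With that fix, uniqueness of the representative (from $e\cdot e=-1<0$ and positivity of intersections) and automatic regularity of embedded spheres with $c_1=1$ give the smooth family $S_t$, and the rest of your argument goes through.
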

\noindent For a proof of the uniqueness of blow downs in four dimensions, see \cite[Lemma A.1]{ke}.

As a result of Lemma~\ref{lemequiv}, 
Lemma~\ref{bdownunique}, the definition of a blowup form, and Remark \ref{rem:blowup}, we get the following lemma. 
\begin{Lemma} \labell{blowdown Ek}
Let  $M=(\Sigma \times S^2)_{k}$ (or $M=(M_{\Sigma})_{k}$), 
let $\omega$ be a blowup form on $M$, and
let the vector $(\lambda_F,\lambda_B; \delta_1 , \ldots , \delta_{k-1}, \delta_k)$
be the one encoding the cohomology class $[\omega]$.
Then 
\begin{enumerate}
\item
there exists a blowup form $\ol{\omega}$ on $(\Sigma \times S^2)_{k-1}$ (resp.\ $(M_{\Sigma})_{k-1}$)
whose cohomology class is encoded by the vector
$(\lambda_F,\lambda_B;\delta_1, \ldots, \delta_{k-1})$;
\item
for every embedded $\omega$-symplectic sphere in the class $E_k$,
blowing down along it yields a manifold
that is symplectomorphic to $((\Sigma \times S^2)_{k-1},\ol{\omega})$ (resp.\ $((M_{\Sigma})_{k-1},\ol{\omega})$).
\end{enumerate}
\end{Lemma}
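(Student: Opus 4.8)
The plan is to establish both parts at once by blowing down along one conveniently chosen representative of $E_k$ and then deferring to uniqueness of blow-downs for all the remaining representatives. Since $\omega$ is a blowup form, Definition~\ref{def:blowup} provides pairwise disjoint embedded $\omega$-symplectic spheres $S_F, S_1, \ldots, S_k$ in the classes $F, E_1, \ldots, E_k$; note that $S_k$ has self-intersection $-1$ because $E_k \cdot E_k = -1$ by Facts~\ref{h2facts}. First I would blow down $(M,\omega)$ along $S_k$, producing a symplectic manifold $(N,\overline{\omega})$ equipped with the splitting $H_2(M) = H_2(N) \oplus \Z E_k$.

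The next step is to identify $(N,\overline{\omega})$ with the asserted model. Blowing down $S_k$ deletes a single $\ol{\CP^2}$-summand, so by the connected-sum discussion opening Section~\ref{se:blowups} together with the homology considerations of Remark~\ref{rem:blowup}, $N$ is diffeomorphic to $(\Sigma \times S^2)_{k-1}$ (resp.\ $(M_{\Sigma})_{k-1}$), and under this identification the classes $F$, $B$ (resp.\ $B_{-1}$), $E_1, \ldots, E_{k-1}$ spanning $H_2(N)$ are carried to the corresponding classes in $H_2(M)$. Because the blow-down is a symplectomorphism outside an arbitrarily small neighborhood of $S_k$, and $S_F, S_1, \ldots, S_{k-1}$ are disjoint from $S_k$, these spheres persist as pairwise disjoint embedded $\overline{\omega}$-symplectic spheres representing $F, E_1, \ldots, E_{k-1}$; hence $\overline{\omega}$ is a blowup form on $N$ (and Lemma~\ref{lemequiv} guarantees that the single equivalence class of blowup forms on $(\Sigma \times S^2)_{k-1}$ into which it falls is well defined). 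Evaluating $[\overline{\omega}]$ on $F$, $B$ (resp.\ $B_{-1}$) and $E_1, \ldots, E_{k-1}$ returns the same values as $[\omega]$, so $[\overline{\omega}]$ is encoded by $(\lambda_F, \lambda_B; \delta_1, \ldots, \delta_{k-1})$; as a consistency check, the inequalities of Lemma~\ref{poslemma} for this shortened vector are immediate from those for the original. This proves part (1).

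For part (2), let $C$ be an arbitrary embedded $\omega$-symplectic sphere in the class $E_k$ and let $(N',\omega')$ be the result of blowing down along $C$. Both $S_k$ and $C$ are embedded $\omega$-symplectic $(-1)$-spheres lying in the single class $E_k$, so Lemma~\ref{bdownunique} supplies a symplectomorphism $(N',\omega') \cong (N,\overline{\omega})$ that is the identity on $H_2$ relative to the two splittings. Composing with the identification of part (1) then yields $(N',\omega') \cong ((\Sigma \times S^2)_{k-1},\overline{\omega})$, resp.\ $(N',\omega')\cong((M_{\Sigma})_{k-1},\overline{\omega})$, as required.

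I expect the only genuinely delicate point to be the persistence of $S_F, S_1, \ldots, S_{k-1}$ as disjoint embedded symplectic spheres after the blow-down, and hence the verification that $\overline{\omega}$ is again a blowup form: this relies precisely on the disjointness built into Definition~\ref{def:blowup} together with the locality of the symplectic blow-down near $S_k$. By contrast, the ``every embedded sphere in $E_k$'' clause is disposed of cleanly by Lemma~\ref{bdownunique}, which is the sole place the choice of representative enters, and the identification of the diffeomorphism type and cohomology class is routine bookkeeping with the $H_2$-splitting.
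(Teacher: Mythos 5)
Your proposal is correct and follows essentially the same route as the paper, which derives the lemma precisely from the ingredients you use: the disjointness of the exceptional spheres in Definition~\ref{def:blowup} (so that $S_F, S_1,\ldots,S_{k-1}$ survive the local blow-down along $S_k$ and exhibit $\overline{\omega}$ as a blowup form), Remark~\ref{rem:blowup} together with the connected-sum discussion to pin down the diffeomorphism type and the encoding vector, Lemma~\ref{lemequiv}, and Lemma~\ref{bdownunique} to dispose of the arbitrary representative of $E_k$ in part (2). Your write-up is a faithful expansion of the paper's one-line citation of these facts.
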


%%%%%%%%%%%%%%%%%%%%%%%%%%%%%%%%%%%%%%%%%%%
\section{Hamiltonian circle actions on ruled symplectic\\ four-manifolds with one or two blowups}\label{se: at most 2 blowups}
%%%%%%%%%%%%%%%%%%%%%%%%%%%%%%%%%%%%%%%%%%%

In this section we use the correspondence between Hamiltonian circle actions and decorated graphs to deduce results on symplectic manifolds from combinatorial observations.
We first consider Hamiltonian $S^1$-actions on a symplectic manifold obtained from a ruled symplectic manifold by a single blowup. 
\begin{Lemma}\labell{dual}
Assume that $0<\eps<\min\{r,s\}$.
\begin{enumerate}
\item There is a Hamiltonian $S^1$-action on $(\Sigma \times S^2)_{1}$ with the blowup form encoded by $(s,r;\varepsilon)$, for which there exists an equivariant sphere $C$ in the class $E_1$ and an equivariant sphere $D$ in the class  $F-E_{1}$, and blowing down along $C$ yields a Hamiltonian $S^1$-action on $\Sigma \times S^2$ with the compatible symplectic form encoded by $(s,r)$, and blowing down along $D$ yields a Hamiltonian $S^1$-action on $M_{\Sigma}$ with the compatible symplectic form encoded by $(s,r+\half s-\varepsilon)$.

\item There is a Hamiltonian $S^1$-action on $(M_{\Sigma})_{1}$ with the blowup form encoded by $(s,r;\varepsilon)$, for which there exists an equivariant sphere $C$ in the class $E_1$ and an equivariant sphere $D$ in the class  $F-E_{1}$, and blowing down along $C$ yields a Hamiltonian $S^1$
action on $M_{\Sigma}$ with the compatible symplectic form encoded by $(s,r)$, and blowing down along $D$ yields a Hamiltonian $S^1$-action on $\Sigma \times S^2$ with the compatible symplectic form encoded by $(s,r+\half s-\varepsilon)$. 
\end{enumerate}
\end{Lemma}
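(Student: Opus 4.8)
The plan is to construct the desired action explicitly as an equivariant blowup of a ruled action, and then read off both blow-downs from Karshon's decorated graph; I will carry out part (1) in detail, since part (2) is entirely analogous up to one necessary change noted at the end. First I would invoke Proposition~\ref{zerocase} to fix the rotation action (the case $n=0$) on $\Sigma \times S^2$ with the form encoded by $(s,r)$: its decorated graph has two fat vertices of genus $g$, both with area label $r$, at moment levels differing by $s$, joined by one edge representing the fiber class $F$. Since $0<\varepsilon<\min\{r,s\}$, an $S^1$-equivariant symplectic blowup of size $\varepsilon$ at a point of the minimum surface yields a \emph{valid} decorated graph by Remark~\ref{bupconditions}: the minimum fat vertex keeps area $r-\varepsilon>0$ (using $\varepsilon<r$), and a new isolated fixed point appears at height $\varepsilon$ above the minimum, staying strictly below the maximum because $\varepsilon<s$ (see Figure~\ref{fig:eq circle-blowup}). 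This is a Hamiltonian $S^1$-action on $(\Sigma \times S^2)_{1}$ whose blowup form is encoded by $(s,r;\varepsilon)$, as the classes $F$ and $B$ are preserved while $\frac{1}{2\pi}\langle\Omega,E_1\rangle=\varepsilon$.

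From the resulting graph I would extract $C$ and $D$ as gradient edges, hence as $S^1$-invariant embedded symplectic $(-1)$-spheres: $C$ is the edge of size $\varepsilon$ joining the minimum fat vertex to the new isolated point, in the exceptional class $E_1$; and $D$ is the edge of size $s-\varepsilon$ joining the isolated point to the maximum fat vertex, in the class $F-E_1$. Both classes lie in $\calE(M)$ by \eqref{Biran}. Blowing down equivariantly along the $S^1$-invariant sphere $C$ is simply the inverse of the construction (it is well defined and unique by Lemma~\ref{bdownunique} and Lemma~\ref{blowdown Ek}) and returns the rotation action on $\Sigma \times S^2$ encoded by $(s,r)$.

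The substantive step is the blow-down along $D$, the \emph{non-standard} exceptional class. Collapsing the $S^1$-invariant $(-1)$-sphere $D$ absorbs the isolated point into the maximum fat vertex and leaves a graph with exactly two fat vertices of genus $g$, which by Proposition~\ref{zerocase} represents a Hamiltonian action on a ruled symplectic manifold. To identify the bundle and its encoding I would do the homology bookkeeping using \eqref{eq:intersection}: the fiber is preserved since $F\cdot(F-E_1)=0$, so $\lambda_F=s$; the maximum section in class $B$ meets $D$ once ($B\cdot(F-E_1)=1$) and descends to a section of self-intersection $+1$, namely $B_1=B+(F-E_1)$, so the underlying bundle is the \emph{nontrivial} one $M_{\Sigma}$; and the minimum section is disjoint from $D$ ($(B-E_1)\cdot(F-E_1)=0$), so it descends unchanged to $B-E_1=B_{-1}$, giving $\frac{1}{2\pi}\langle\Omega,B_{-1}\rangle=r-\varepsilon$ and hence $\lambda_B=(r-\varepsilon)+\half s=r+\half s-\varepsilon$. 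This is exactly the claimed encoding $(s,\,r+\half s-\varepsilon)$ on $M_{\Sigma}$.

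For part (2) I would run the same argument starting from the $n=1$ rotation action on $M_{\Sigma}$ encoded by $(s,r)$, whose fat vertices have areas $r-\half s$ (minimum, class $B_{-1}$) and $r+\half s$ (maximum, class $B_1$). The one necessary change is to blow up at the \emph{maximum} surface rather than the minimum, so that the hypothesis $\varepsilon<\min\{r,s\}$ still guarantees validity: the maximum area $r+\half s-\varepsilon$ stays positive because $\varepsilon<r$, whereas a blowup at the minimum would require the stronger $\varepsilon<r-\half s$. The identical homology computation then shows that blowing down $E_1$ returns $M_{\Sigma}$ with $(s,r)$, while blowing down $F-E_1$ heals the minimum section $B_{-1}$ into $B_{-1}+(F-E_1)=B_1-E_1$ of even self-intersection $0$, i.e.\ the trivial bundle $\Sigma \times S^2$, with encoding $(s,\,r+\half s-\varepsilon)$. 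I expect the main obstacle to be precisely these two interlocking bookkeeping points: verifying that blowing down the non-standard class $F-E_1$ produces a genuine ruled manifold of the correct parity and the exact encoding, and choosing the blowup site (minimum versus maximum) so that the single inequality $0<\varepsilon<\min\{r,s\}$ suffices in both parts.
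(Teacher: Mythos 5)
Your proof is correct and takes essentially the same route as the paper: the paper's entire proof is to inspect the decorated graphs of Figure~\ref{fig:2circ blow ups}, which encode exactly the construction you spell out (an $S^1$-equivariant blowup of size $\varepsilon$ at a fat surface of the ruled manifold, with the two gradient spheres in $E_1$ and $F-E_{1}$ realizing the two blow-downs), and your homology bookkeeping merely makes that inspection explicit. The only caveat --- one shared with the paper's own figure-based argument and really a matter of the lemma's hypotheses --- is that in part (2) the starting decorated graph on $(M_{\Sigma},\omega_{s,r})$ has a fat vertex of area $r-\half s$, so the construction implicitly requires $r>\half s$, which is not guaranteed by $0<\varepsilon<\min\{r,s\}$ alone.
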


\begin{proof}
To each case of the lemma, it suffices to inspect the decorated graphs given in Figure \ref{fig:2circ blow ups}. 

\centering{\begin{figure}[h]
\includegraphics[width=\textwidth]{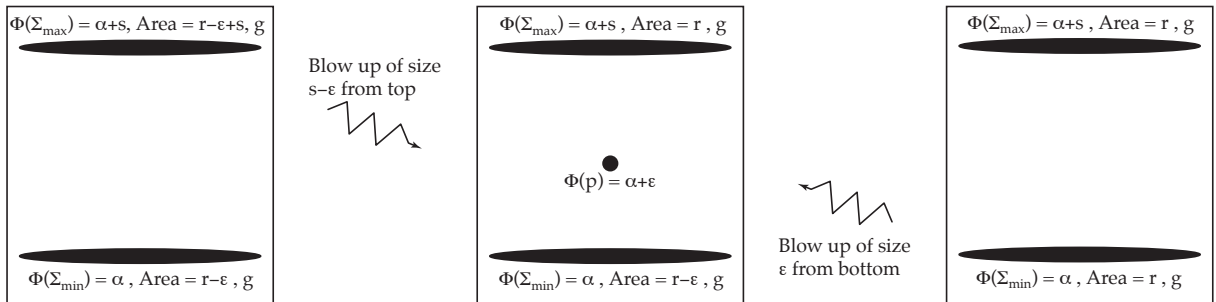} 
\caption{A circle blow up of size $s-\varepsilon$ at the maximal surface of 
$(M_{\Sigma},\omega_{s,r+\frac{s}{2}-\varepsilon})$, on the left, is isomorphic to a
circle blow up of size $\varepsilon$ at the minimal surface of $(\Sigma \times S^2, \omega_{s,r})$, on the right.}
\label{fig:2circ blow ups}
 \end{figure}}

\end{proof}

\noindent Together, Lemma \ref{dual} and Lemma \ref{bdownunique} allow us to prove the following result.
\begin{Corollary} \labell{corbdowne12 prime}
Let $M=(\Sigma \times S^2)_{1}$ $(\mbox{resp. } 
M=(M_{\Sigma})_{1})$ with a blowup form $\omega$ encoded 
by $(\lambda_F,\lambda_B;\delta)$. 
For every embedded $\omega$-symplectic sphere in the class 
$F-E_{1}$, blowing down along it
yields a symplectic manifold
that is symplectomorphic to $M_{\Sigma}$ $(\mbox{resp. }\Sigma \times S^2)$ with the symplectic ruling encoded by $({\lambda_F,\lambda_B+\half \lambda_F-\delta})$.  
\end{Corollary}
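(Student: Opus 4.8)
The plan is to compute the blow-down once, in the explicit $S^1$-equivariant model furnished by Lemma~\ref{dual}, and then to propagate that computation to an arbitrary blowup form and to an arbitrary representing sphere using the uniqueness statements of Lemma~\ref{lemequiv} and Lemma~\ref{bdownunique}. I will carry out the case $M=(\Sigma\times S^2)_1$; the case $M=(M_\Sigma)_1$ is identical, invoking part~(2) of Lemma~\ref{dual} in place of part~(1). First I would record that $F-E_1$ is an exceptional class: the intersection table in Facts~\ref{h2facts} gives $(F-E_1)\cdot(F-E_1)=-1$, so every embedded $\omega$-symplectic sphere in this class is a $(-1)$-sphere and may be blown down.

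Next I would produce the model. By Lemma~\ref{dual}(1), applied with $(s,r,\varepsilon)=(\lambda_F,\lambda_B,\delta)$, there is a blowup form $\omega'$ on $M$ encoded by the \emph{same} vector $(\lambda_F,\lambda_B;\delta)$, equipped with a Hamiltonian $S^1$-action and an equivariant embedded $\omega'$-symplectic sphere $D'$ in the class $F-E_1$, such that blowing down $(M,\omega')$ along $D'$ yields $M_\Sigma$ with the ruled form encoded by $(\lambda_F,\lambda_B+\half\lambda_F-\delta)$. This settles the identification of the target, provided I can transfer it to the given $\omega$ and to an arbitrary sphere $D$. Since $\omega$ and $\omega'$ are blowup forms with the same cohomology class, Lemma~\ref{lemequiv} places them in a single equivalence class, and I would upgrade this to a symplectomorphism $\Psi\colon(M,\omega)\to(M,\omega')$ inducing the identity on $H_2(M)$. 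Then, for any embedded $\omega$-symplectic sphere $D$ in $F-E_1$, the image $\Psi(D)$ is an embedded $\omega'$-symplectic $(-1)$-sphere in the same class $F-E_1$; applying Lemma~\ref{bdownunique} inside $(M,\omega')$ to the pair $\Psi(D),D'$ shows their blow-downs agree up to a symplectomorphism that is the identity on the second homology of the blow-down. Chaining these identifications---$\Psi$, then Lemma~\ref{bdownunique}, then the model computation---exhibits the blow-down of $(M,\omega)$ along $D$ as symplectomorphic to $M_\Sigma$ with the ruling encoded by $(\lambda_F,\lambda_B+\half\lambda_F-\delta)$. Because every map in the chain is homologically trivial with respect to the splittings by $F-E_1$, the classes $F$ and $B_{-1}$ are carried correctly and the encoding of the target is preserved.

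The main obstacle is the passage from Lemma~\ref{lemequiv} to the symplectomorphism $\Psi$: Lemma~\ref{lemequiv} only records that cohomologous blowup forms are deformation equivalent, whereas I need an honest homology-fixing symplectomorphism so that Lemma~\ref{bdownunique} can be applied on a \emph{single} symplectic manifold and so that the target's cohomology class---hence its encoding vector---is unambiguous. This is precisely where the uniqueness of cohomologous symplectic forms up to isotopy on ruled surfaces and their blowups enters, the same isotopy principle recorded for the ruled quotients in Remark~\ref{rem:blowup}. A secondary point to check is that the explicit $S^1$-model of Lemma~\ref{dual} is actually available for the vector in question; when its size hypothesis is not met directly, one replaces the basic two-vertex graph by an equivalent decorated graph (a different choice of the integer $n$ in Proposition~\ref{zerocase}, or a flip) realizing the same symplectic manifold, after which the same blow-down computation applies. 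Once $\Psi$ is in hand, the remainder is the formal combination of Lemmas~\ref{dual} and~\ref{bdownunique} described above.
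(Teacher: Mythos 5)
Your argument mirrors the paper's own proof of Corollary~\ref{corbdowne12 prime}: compute the blow-down once in the equivariant model supplied by Lemma~\ref{dual}, then transfer the conclusion to an arbitrary blowup form via Lemma~\ref{lemequiv} and to an arbitrary representing sphere via the uniqueness of blow-downs, Lemma~\ref{bdownunique}. The two caveats you flag --- upgrading deformation equivalence to an honest homology-fixing symplectomorphism, and the size hypothesis of Lemma~\ref{dual} --- are present but left implicit in the paper's proof as well, so your treatment is at least as careful as the original.
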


\begin{proof}
By Lemma \ref{dual}, for some blowup form on $M$ there is an embedded symplectic sphere in the exceptional class $F-E_{1}$ such that blowing down along it yields the stated symplectic manifold. By Lemma \ref{calE and J}, for every blowup form on $M$ there exists a symplectic sphere in $F-E_{1}$, and by Lemma~\ref{lemequiv}, blowing down along it has the same effect on homology. The blow
down is well-defined by Lemma~\ref{bdownunique}, and hence yields the desired symplectic manifold.
\end{proof}

\begin{Corollary}\labell{cor-same}
Let $k \geq 1$. The symplectic manifold $(M_{\Sigma})_{k}$ $(\mbox{resp. } 
(\Sigma \times S^2)_{k})$ with a blowup form $\omega$ with $[\omega]$ encoded by $(\lambda_F,\lambda_B;\delta_1,\ldots,\delta_k)$ is symplectomorphic to $(\Sigma \times S^2)_{k}$ $(\mbox{resp. } (M_{\Sigma})_{k})$ with a blowup form $\omega'$ with $[\omega']$ encoded by $(\lambda_F,\lambda_B+\half \lambda_F-\delta_1;\lambda_F-\delta_1,\delta_2, \ldots,\delta_k)$. 
Moreover if $k\geq 2$ and $[\omega]$ is in $g$-reduced form, then so is $[\omega']$.
\end{Corollary}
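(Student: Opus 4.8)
The plan is to reduce to the single-blowup case, perform the bundle swap there, and then blow back up; the two directions are symmetric, so I would treat $(M_\Sigma)_k\to(\Sigma\times S^2)_k$ and invoke the other case of Corollary~\ref{corbdowne12 prime} for the reverse. First I would apply Lemma~\ref{blowdown Ek} to $E_k,E_{k-1},\dots,E_2$ in turn, presenting $((M_\Sigma)_k,\omega)$ as the symplectic blowup of $((M_\Sigma)_1,\bar\omega)$ at $k-1$ disjoint balls $\beta_2,\dots,\beta_k$ of capacities $\delta_2,\dots,\delta_k$, where $\bar\omega$ is encoded by $(\lambda_F,\lambda_B;\delta_1)$. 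Corollary~\ref{corbdowne12 prime} says that blowing down $((M_\Sigma)_1,\bar\omega)$ along a symplectic sphere in $F-E_1$ yields $\Sigma\times S^2$ with ruling encoded by $(\lambda_F,\lambda_B+\half\lambda_F-\delta_1)$. Reading this backwards, $((M_\Sigma)_1,\bar\omega)$ is the blowup of that ruled manifold at a single ball of capacity equal to the area $\lambda_F-\delta_1$ of $F-E_1$; equivalently there is a symplectomorphism $\Psi_1\colon((M_\Sigma)_1,\bar\omega)\to((\Sigma\times S^2)_1,\omega_1')$ onto a blowup form $\omega_1'$ encoded by $(\lambda_F,\lambda_B+\half\lambda_F-\delta_1;\lambda_F-\delta_1)$ whose new exceptional class $E_1'$ is $F-E_1$. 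Blowing up at the transported balls $\Psi_1(\beta_2),\dots,\Psi_1(\beta_k)$ then lifts $\Psi_1$ to the desired symplectomorphism $\Psi\colon((M_\Sigma)_k,\omega)\to((\Sigma\times S^2)_k,\omega')$.

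To read off the encoding of $\omega'$ I would record the identification of homology bases induced by $\Psi$: on the $(\Sigma\times S^2)_k$-side the exceptional classes are $E_1'=F-E_1$ and $E_i'=E_i$ for $i\ge 2$, the fibre is $F'=F$, and the section class $B$ of the trivial bundle corresponds to $B_1-E_1$ — the unique class of square $0$ meeting $F$ once and orthogonal to $E_1'$, as one checks against the intersection table in Facts~\ref{h2facts}. Since $B_{-1}=B_1-F$, the $M_\Sigma$-encoding of $\omega$ gives $\tfrac{1}{2\pi}\langle\omega,B_1\rangle=\lambda_B+\half\lambda_F$, hence $\tfrac{1}{2\pi}\langle\omega,B\rangle=\lambda_B+\half\lambda_F-\delta_1$ and $\tfrac{1}{2\pi}\langle\omega,E_1'\rangle=\tfrac{1}{2\pi}\langle\omega,F-E_1\rangle=\lambda_F-\delta_1$, while $\langle\omega,F\rangle$ and the remaining $\langle\omega,E_i\rangle$ are unchanged. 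This is exactly the vector $(\lambda_F,\lambda_B+\half\lambda_F-\delta_1;\lambda_F-\delta_1,\delta_2,\dots,\delta_k)$. That $\omega'$ is a genuine blowup form is automatic: it is built as a symplectic blowup of a ruled manifold, so choosing the balls disjoint from a fixed fibre and from the $E_1'$-sphere yields disjoint symplectic spheres in $F',E_1',\dots,E_k'$ (alternatively, $\Psi$ preserves $c_1$, so $\omega'$ carries the first Chern class of a blowup form and is a blowup form by the argument in the proof of Lemma~\ref{poslemma}).

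For the ``moreover'' clause, since $g(\Sigma)>0$ the $g$-reduced condition is exactly \eqref{eq:red5}: $\delta_1\ge\dots\ge\delta_k$ and $\delta_1+\delta_2\le\lambda_F$. The transformed vector fixes $\lambda_F$ and $\delta_2,\dots,\delta_k$ and replaces $\delta_1$ by $\delta_1'=\lambda_F-\delta_1$ in the first slot. Then $\delta_1'\ge\delta_2'$ reads $\lambda_F-\delta_1\ge\delta_2$, which is $\delta_1+\delta_2\le\lambda_F$; the chain $\delta_2'\ge\dots\ge\delta_k'$ is inherited from $\omega$; and $\delta_1'+\delta_2'=\lambda_F-\delta_1+\delta_2\le\lambda_F$ reduces to $\delta_2\le\delta_1$. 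Thus the two hypotheses of \eqref{eq:red5} give back \eqref{eq:red5} for $\omega'$. The only delicate point in the whole argument is the bookkeeping in the first two paragraphs: one must verify that blowing down along $F-E_1$ trades the bundle type while promoting $F-E_1$ to the new exceptional class of size $\lambda_F-\delta_1$ (so the number of blowups is preserved, not decreased), and that $E_2,\dots,E_k$ ride along untouched under $\Psi$; the encoding computation and the verification of \eqref{eq:red5} are then routine.
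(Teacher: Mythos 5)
Your argument is correct and is essentially the paper's (implicit) derivation: the paper states this as an unproved corollary of Lemma~\ref{dual}, Corollary~\ref{corbdowne12 prime}, and Lemma~\ref{blowdown Ek}, and your proof is exactly the natural unpacking of those results — reduce to $k=1$ by blowing down $E_k,\dots,E_2$, swap the bundle type there, and blow back up, using uniqueness of symplectic blowups of given sizes just as the paper does in the proof of Corollary~\ref{cor:cremona}. The homology bookkeeping (identifying the new section class with $B_1-E_1$ and the new exceptional class with $F-E_1$) and the check of \eqref{eq:red5} for the transformed vector are both accurate.
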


We now continue our study by looking at the effect of two blowups on a decorated graph.
In Figures \ref{fig:circle-blowup} and \ref{fig:circle-blowup2} we describe two different Hamiltonian blowups of different ruled symplectic manifolds that yield the same symplectic manifold with Hamiltonian $S^1$-action. Assume that 
$0<\varepsilon_1, \, \varepsilon_2<\min\{r,s\}$.

\begin{figure}[h] 
\centering{
\includegraphics[width=\textwidth]{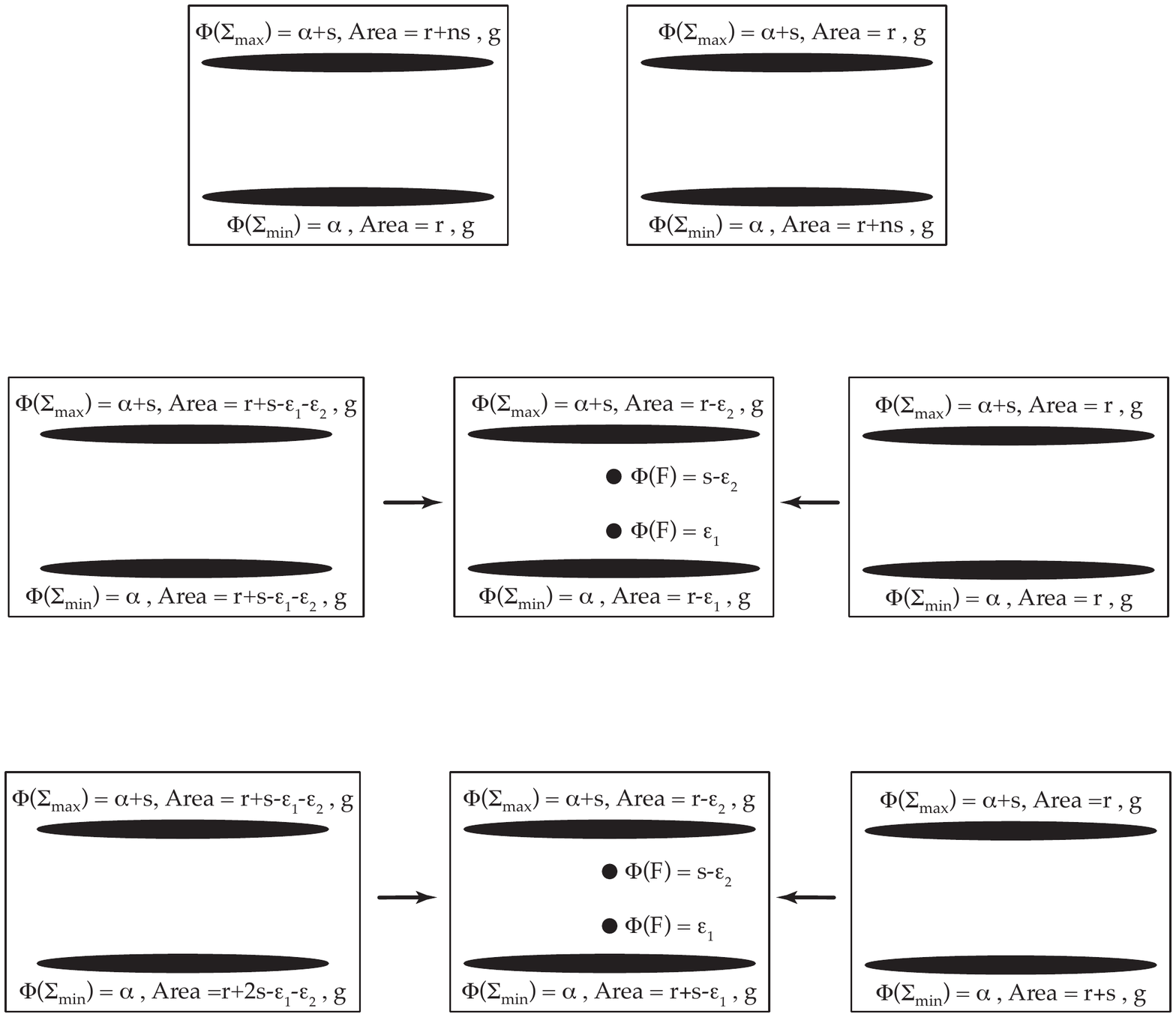}
\caption{On the left, we have a decorated graph for $\Sigma\times S^2$ with $\lambda_B=r+s-\varepsilon_1-\varepsilon_2$ and $\lambda_F=s$.  From the left to the center, we take two blow-ups of $\Sigma\times S^2$ of sizes $s-\varepsilon_2$ from
the bottom and $s-\varepsilon_1$ from the top.\newline 
On the right, we have a decorated graph for $\Sigma\times S^2$ with $\lambda_B=r$ and $\lambda_F=s$.  From the right to the center, we take two blow-ups of $\Sigma\times S^2$ of sizes $\varepsilon_1$ from
the bottom and $\varepsilon_2$ from the top.}
\label{fig:circle-blowup}
}
\end{figure}

\begin{figure}[h] 
\centering{
\includegraphics[width=\textwidth]{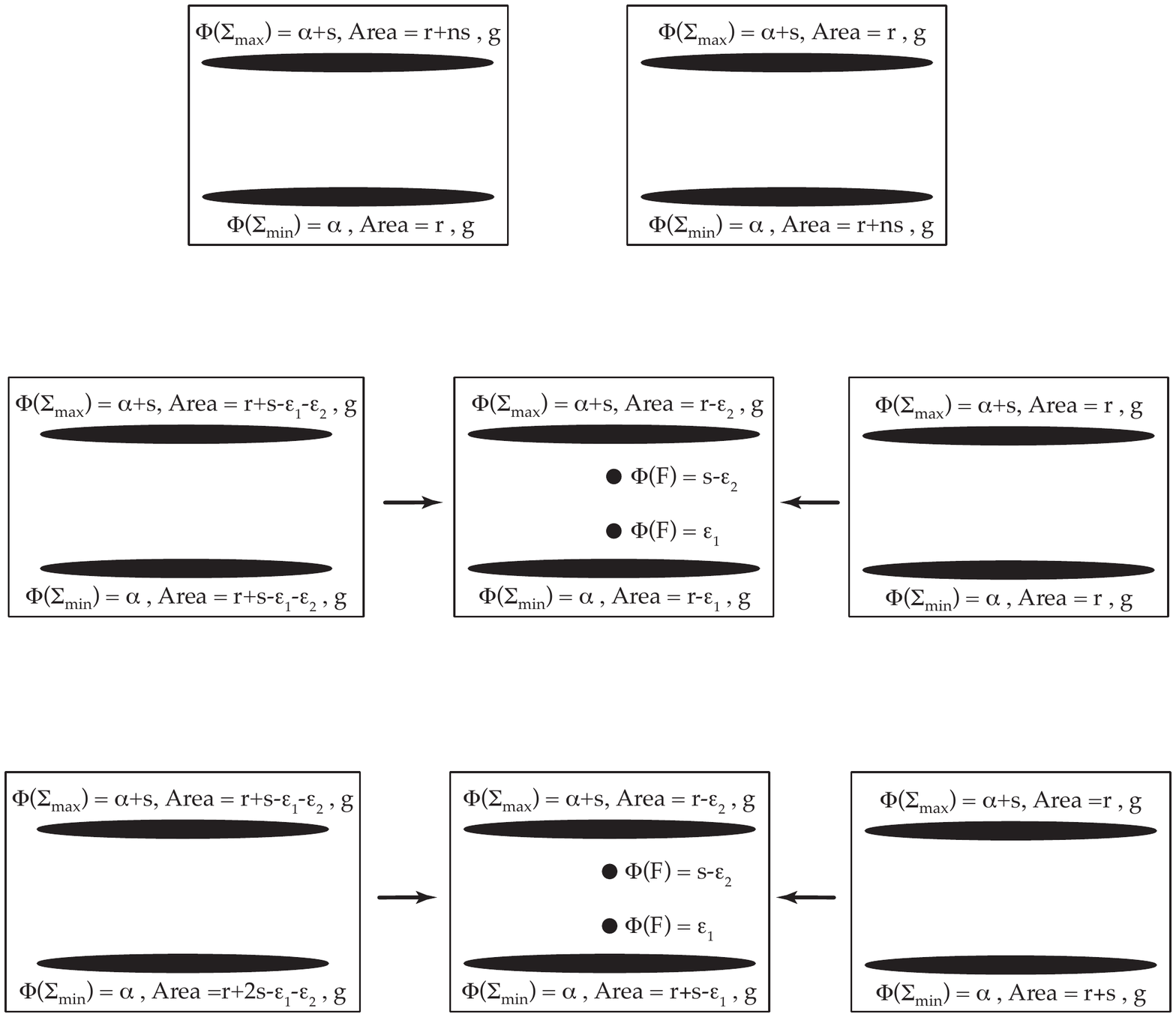}
\caption{{\bf On the left,} we have a decorated graph for $M_\Sigma$ with\newline 
$\lambda_B=r+\frac{s}{2}+s-\varepsilon_1-\varepsilon_2$ and $\lambda_F=s$.  
From the left to the center, we take two blow-ups of $M_\Sigma$ of sizes $s-\varepsilon_2$ from
the bottom and $s-\varepsilon_1$ from the top.\newline 
{\bf On the right,} we have a decorated graph for $M_\Sigma$ with $\lambda_B=r+\frac{s}{2}$ and $\lambda_F=s$.  From the right to the center, we take two blow-ups of $M_{\Sigma}$ of sizes $\varepsilon_1$ from
the bottom and $\varepsilon_2$ from the top.}
\label{fig:circle-blowup2}
}
\end{figure}

\noindent Consequently, we get an analogue of the {\bf Cremona transformation}, adjusted for the positive genus case.
\begin{Definition} \labell{Cremdef}
Let $k \geq 2$.
For a vector $v = (\lambda_F,\lambda_B ; \delta_1, \ldots , \delta_k)$,
let $$\mathbf{defect}(v) := \delta_1 + \delta_2  - \lambda_F.$$
We define
$$\mathbf{cremona}(v) := (\lambda_F',\lambda_B' ; \delta_1', \ldots , \delta_k')$$
by setting
$$\begin{array}{lcl}
\lambda_F' & := & \lambda_F\\
\lambda_B' & := & \lambda_B - \mathbf{defect}(v) \\
\delta_j' & := &
\begin{cases}
 \delta_1- \mathbf{defect}(v)=\lambda_F-\delta_2 & \text{ if } j=1 \\
 \delta_2-\mathbf{defect}(v)=\lambda_F-\delta_1 & \text{ if } j=2 \\
  \delta_j              & \text{ if } 3 \leq j \leq k .
\end{cases}
\end{array} $$
\end{Definition}

\begin{Corollary} \labell{cor:cremona}
Let $k \geq 2$, and consider a $k$-fold blowup $M=(\Sigma \times S^2)_{k}$ or $M=(M_{\Sigma})_{k}$
of a ruled symplectic manifold, and
$v=(\lambda_F,\lambda_B;\delta_1,\ldots,\delta_k)$. 
Let $(\lambda_F',\lambda_B';\delta_1',\ldots,\delta_k')=\mathbf{cremona}(v)$.
Suppose that there exists a blowup form
$\omega_{\lambda_F,\lambda_B;\delta_1,\ldots,\delta_k}$
 whose cohomology class
is encoded by $v$.
Then there exists a blowup form
$\omega_{\lambda_F',\lambda_B';\delta_1',\ldots,\delta_k'}$
whose cohomology class
is encoded by $\mathbf{cremona}(v)$.
Moreover, $(M,\omega_{\lambda_F,\lambda_B;\delta_1,\ldots,\delta_k})$
and $(M,\omega_{\lambda_F',\lambda_B';\delta_1',\ldots,\delta_k'})$
are symplectomorphic.
\end{Corollary}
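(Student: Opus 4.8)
The plan is to exhibit the Cremona transformation as a composition of symplectomorphisms already constructed in the previous results, so that both assertions follow formally. Write $\Phi$ for the bundle-switching operation of Corollary~\ref{cor-same}: it sends a blowup form encoded by $(\lambda_F,\lambda_B;\delta_1,\delta_2,\ldots,\delta_k)$ on one of the two bundle types to a symplectomorphic blowup form, on the other bundle type, encoded by $(\lambda_F,\lambda_B+\half\lambda_F-\delta_1;\lambda_F-\delta_1,\delta_2,\ldots,\delta_k)$. Write $\tau$ for the transposition of the first two entries $\delta_1,\delta_2$, which by Lemma~\ref{permute diff top} likewise realizes a symplectomorphism. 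I claim that, as operations on encoding vectors, $\mathbf{cremona}=\Phi\circ\tau\circ\Phi$.

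To verify the claim I would simply track the vector. Starting from $v=(\lambda_F,\lambda_B;\delta_1,\delta_2,\delta_3,\ldots,\delta_k)$, the map $\Phi$ produces $(\lambda_F,\lambda_B+\half\lambda_F-\delta_1;\lambda_F-\delta_1,\delta_2,\delta_3,\ldots)$; then $\tau$ produces $(\lambda_F,\lambda_B+\half\lambda_F-\delta_1;\delta_2,\lambda_F-\delta_1,\delta_3,\ldots)$; and $\Phi$ applied once more, now acting on the entry $\delta_2$ sitting in the first slot, produces $(\lambda_F,\lambda_B+\lambda_F-\delta_1-\delta_2;\lambda_F-\delta_2,\lambda_F-\delta_1,\delta_3,\ldots)$. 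Since $\lambda_B+\lambda_F-\delta_1-\delta_2=\lambda_B-\mathbf{defect}(v)$, $\lambda_F-\delta_2=\delta_1'$, and $\lambda_F-\delta_1=\delta_2'$, this final vector is precisely $\mathbf{cremona}(v)$. Because $\Phi$ is used an even number of times the bundle type is restored, so the source and target manifolds agree, as required.

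Both conclusions of the corollary then drop out. For the existence statement: $v$ encodes a blowup form by hypothesis, so Corollary~\ref{cor-same} guarantees a blowup form encoded by $\Phi(v)$, Lemma~\ref{permute diff top} one encoded by $\tau(\Phi(v))$, and Corollary~\ref{cor-same} once more one encoded by $\Phi(\tau(\Phi(v)))=\mathbf{cremona}(v)$. For the symplectomorphism statement: each of the three maps supplies a symplectomorphism between the corresponding pairs of blowup manifolds, and composing them gives $(M,\omega_{\lambda_F,\lambda_B;\delta_1,\ldots,\delta_k})\cong(M,\omega_{\lambda_F',\lambda_B';\delta_1',\ldots,\delta_k'})$.

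I expect no serious obstacle here: all of the genuine (holomorphic) content—existence of the equivariant exceptional spheres in $E_1$ and $F-E_1$ needed to carry out the switch—is already absorbed into Corollary~\ref{cor-same} through Lemmas~\ref{dual}, \ref{martin}, and \ref{bdownunique}. The only thing to get right is the bookkeeping observation that interleaving two one-index switches with a single transposition upgrades the one-index operation $\Phi$ into the two-index Cremona move. As an independent check on the existence claim one can instead appeal to Lemma~\ref{poslemma}: positivity of $\lambda_F'$ and of each $\delta_i'$ follows from $\lambda_F>\delta_1,\delta_2$, the conditions $\lambda_F'>\delta_i'$ reduce to $\delta_1,\delta_2>0$, and a direct computation shows that $\mathbf{cremona}$ leaves the volume $\lambda_F\lambda_B-\half\sum_i\delta_i^2$ unchanged, which yields both the volume inequality and, since $\lambda_F>0$, the positivity of $\lambda_B'$.
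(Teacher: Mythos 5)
Your proof is correct, but it takes a different route from the paper's. The paper proves the $k=2$ case directly: it constructs a single diffeomorphism $\psi$ of $(\Sigma\times S^2)_2$ (resp.\ $(M_\Sigma)_2$) by exhibiting, via the decorated graphs of Figures~\ref{fig:circle-blowup} and \ref{fig:circle-blowup2}, one Hamiltonian $S^1$-manifold as two different two-fold equivariant blowups of the \emph{same} bundle type, reads off the induced map on $H_2$ ($E_1\mapsto F-E_2$, $E_2\mapsto F-E_1$, $\hat B\mapsto \hat B+F-E_1-E_2$), and then uses Lemma~\ref{lemequiv} to transport the conclusion to every blowup form; the case $k\ge 3$ is handled by uniqueness of symplectic blowups of given sizes. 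You instead factor the Cremona move as $\Phi\circ\tau\circ\Phi$, passing through the \emph{other} bundle type twice via Corollary~\ref{cor-same} and interposing the transposition of Lemma~\ref{permute diff top}; your vector bookkeeping ($\lambda_B+\lambda_F-\delta_1-\delta_2=\lambda_B-\mathbf{defect}(v)$, $\lambda_F-\delta_2=\delta_1'$, $\lambda_F-\delta_1=\delta_2'$) is accurate, each intermediate vector is certified to encode a blowup form by the cited results, and since Corollary~\ref{cor-same} is stated without a reducedness hypothesis and for all $k\ge 1$, your argument handles all $k$ uniformly without a separate induction on blowup sizes. Both arguments ultimately rest on the same holomorphic input (Lemmas~\ref{dual}, \ref{martin}, \ref{bdownunique}); what your factorization buys is a purely formal derivation once Corollary~\ref{cor-same} is in hand, at the cost of not exhibiting the explicit action of the symplectomorphism on $H_2$, which the paper's $\psi$ records and which is the form in which the Cremona reflection (along $F-E_1-E_2$) is usually quoted. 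Your closing remark that existence alone also follows from Lemma~\ref{poslemma}, since $\mathbf{cremona}$ preserves $\lambda_F$ and the volume $\lambda_F\lambda_B-\half\sum_i\delta_i^2$, is a valid independent check and mirrors the computation the paper performs later in Lemma~\ref{cremonaisgood}.
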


\begin{proof}
First assume $k=2$.
In  Figures \ref{fig:circle-blowup} and \ref{fig:circle-blowup2} we show that
there exists a blowup form $\omega$ on the manifold $M^{2}:=(\Sigma \times S^2)_{2}$ or on $M^{2}:=(M_{\Sigma})_{2}$,
such that there is a diffeomorphism $\psi \colon M^2 \to M^2$ with the following properties. 
The induced map $\psi_*$ on $H_2$ maps 
$$
F  \mapsto F; \,\,
E_1  \mapsto   F-E_2; \,\,
E_2 \mapsto  F-E_1; \,\,
F-E_1  \mapsto  E_2; \,\,
F-E_2  \mapsto  E_1;\,\,
\hat{B} \mapsto \hat{B}+F-E_1-E_2,
$$
where $\hat{B}=B$ if $M^{2}=(\Sigma \times S^2)_{2}$, and $\hat{B}=B_{-1}$ if $M^{2}=(M_{\Sigma})_{2}$.
Moreover $\psi^{*}\omega$ is a blowup form.

In particular, there exists a blowup form on $M^{2}$ such that the pullback by $\psi$ is a blowup form. By Lemma \ref{lemequiv}, it follows that for {\bf every}  blowup form, its pullback by $\psi$ is a blowup form.
Because of how $\psi$ acts on homology,
if $\omega$ is a symplectic form 
that is encoded by the vector $(\lambda_F,\lambda_B; \delta_1, \delta_2)$,
then $\psi^*\omega$ is a symplectic form
that is encoded by the vector $(\lambda'_F,\lambda'_B; \delta_1', \delta_2')$.
The case $k \geq 3$ then follows by the uniqueness of symplectic blowups of given sizes.
\end{proof}

%%%%%%%%%%%%%%%%%%%%%%%%%%%%%%%%%%%%%%%%%%%%%%%%%%%%%%%%%%%%%%%
\section{Existence of $g$-reduced form} \labell{sec:reduce}
%%%%%%%%%%%%%%%%%%%%%%%%%%%%%%%%%%%%%%%%%%%%%%%%%%%%%%%%%%%%%%%

In this section we prove the existence part of Theorem \ref{thmexu}.  
That is, given   
a $k$-fold blowup of a ruled symplectic manifold over a surface of positive genus,  
$M=(\Sigma \times S^2)_{k}$ or $M=(M_{\Sigma})_{k}$, we show that there
exists a blowup form on $M$ whose cohomology class is $g$-reduced.
We achieve this by providing an algorithm that
starts with a vector encoding the cohomology class of a blowup form 
on a $k$-fold blowup of an irrational ruled manifold, and 
returns a symplectomorphic blowup form which is $g$-reduced.

\begin{Definition}
Let $k \geq 2$. The {\bf Cremona move} on ${\R}^{2+k}$ is the composition of the following two maps:
\begin{enumerate}
\item first the map
$$ v \mapsto \begin{cases} \mathbf{cremona}(v) & \text{ if }\defect(v)> 0\\
                                                   v & \text{ otherwise.} \end{cases}; \mbox{ followed by }$$
\item the map $ \mathbf{sort}: (\lambda_F, \lambda_B;\delta_1,\ldots,\delta_k) \mapsto (\lambda_F,\lambda_B;\delta_{i_{1}},\ldots,\delta_{i_{k}}),$ 
which permutes the last $k$ entries such that $\delta_{i_{1}}\geq \ldots \geq \delta_{i_{k}}$.
\end{enumerate}
\end{Definition}

\noindent Let $k \geq 2$. It is straight forward to check that the Cremona move satisfies the 
following properties.

\begin{Lemma}\labell{noac}
Let $v=  (\lambda_F,\lambda_B ; \delta_1, \ldots , \delta_k)$ 
be a vector in ${\R}^{2+k}$.
The set of real numbers $\delta_i'$ 
that occur 
among the last $k$ entries
in vectors $v' = (\lambda'_F,\lambda'_B;\delta_1',\ldots,\delta_k')$
that can be obtained from $v$
by finitely many iterations of the Cremona move is a nonempty subset of the finite set $\{\delta_1,\ldots,\delta_k, \lambda_F-\delta_1,\ldots,\lambda_F-\delta_k\}$,
and in particular
has no accumulation points.
\end{Lemma}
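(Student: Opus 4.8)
The plan is to track the finite collection of real numbers that can ever appear among the $\delta$-entries, and to show it is contained in the fixed set $S := \{\delta_1,\ldots,\delta_k,\lambda_F-\delta_1,\ldots,\lambda_F-\delta_k\}$ determined by the starting vector $v$. First I would record the key invariance. Since both the map $\mathbf{cremona}$ of Definition~\ref{Cremdef} and the sorting permutation fix the first coordinate, the number $\lambda_F$ is unchanged under every Cremona move; consequently the reflection $\rho(x):=\lambda_F-x$ is one and the same involution throughout the whole iteration. The crucial point is that $S$ is invariant under $\rho$: we have $\rho(\delta_i)=\lambda_F-\delta_i\in S$ and $\rho(\lambda_F-\delta_i)=\delta_i\in S$, so $\rho(S)=S$.

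Next I would argue by induction on the number of Cremona moves that every $\delta$-entry of every reachable vector lies in $S$. The base case is immediate since $\delta_1,\ldots,\delta_k\in S$. For the inductive step, the sorting map merely permutes the entries and introduces no new value, while $\mathbf{cremona}$ (applied only when $\defect>0$) leaves $\delta_3,\ldots,\delta_k$ untouched and replaces the first two current entries $c_1,c_2$ by $\lambda_F-c_2=\rho(c_2)$ and $\lambda_F-c_1=\rho(c_1)$. By the induction hypothesis $c_1,c_2\in S$, and by the $\rho$-invariance of $S$ the new entries $\rho(c_1),\rho(c_2)$ again lie in $S$. Hence all entries of the resulting vector lie in $S$.

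Finally, the set in question is nonempty, because $v$ is itself reachable in zero moves and so the set contains $\delta_1$; and it is contained in the finite set $S$, so it has no accumulation points. This argument is essentially bookkeeping, and the only step that requires genuine care—the one that lets the induction close up—is the reflection-invariance $\rho(S)=S$, which in turn rests on the fact that $\lambda_F$ is preserved by every Cremona move. I do not expect a substantive obstacle beyond verifying, directly from Definition~\ref{Cremdef}, that $\mathbf{cremona}$ acts on the two affected entries exactly by $c\mapsto\rho(c)$ and fixes both $\lambda_F$ and the remaining $\delta_j$.
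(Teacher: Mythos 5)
Your proof is correct. The paper offers no explicit argument for this lemma (it is introduced with ``it is straightforward to check''), and your induction—using that $\lambda_F$ is fixed by every Cremona move, so the involution $\rho(x)=\lambda_F-x$ is constant throughout and preserves the set $S=\{\delta_1,\ldots,\delta_k,\lambda_F-\delta_1,\ldots,\lambda_F-\delta_k\}$, while $\mathbf{cremona}$ acts on the two affected entries exactly by $c\mapsto\rho(c)$ and $\mathbf{sort}$ only permutes—is precisely the intended bookkeeping.
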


\begin{Lemma} \labell{cremonaisgood}
\begin{enumerate}
\item The Cremona move preserves the standard symplectic cone
$$
\mathcal{C}_{0}=\left\{(\lambda_F,\lambda_B;\delta_1,\ldots,\delta_k) \in \R_{>0}^{2+k} \,\Bigg|\, \lambda_F-\delta_i>0 \text{ for all } i \text{ and }
\lambda_F \lambda_B - \half \sum_{i=1}^{k}{\delta_i}^2>0 \right\}.$$
\item If the vector $v'=(\lambda_F,\lambda'_B;\delta'_1,\ldots,\delta'_k)$
is obtained from $v=(\lambda_F,\lambda_B;\delta_1,\ldots,\delta_k)$ by the Cremona move and $v'$ is different from $v$, then 
 $\delta'_{i}\leq \delta_{i}$ for all $i$, and for at least one $i$ we have $\delta'_{i}<\delta_{i}$.
\item A vector $v=(\lambda_F,\lambda_B;\delta_1,\ldots,\delta_k) \in \mathcal{C}_{0}$ 
 is fixed by the Cremona move if and only if it satisfies equation \eqref{eq:red5}.
\end{enumerate}
\end{Lemma}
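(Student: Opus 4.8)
The plan is to dispatch the three items by direct computation, isolating the only nontrivial ingredient. Note first that the coordinate‑sorting map $\mathbf{sort}$ preserves every hypothesis in sight: it only permutes $\delta_1,\dots,\delta_k$, and each condition defining $\mathcal{C}_0$, as well as the monotonicity in \eqref{eq:red5}, is either symmetric in the $\delta_i$ or invariant under such permutations. So all the content sits in the map $\mathbf{cremona}$, and since it is applied only when $\defect(v)>0$, I may restrict to that case. Throughout I will also assume the input is already sorted, $\delta_1\ge\dots\ge\delta_k$; this is the case in every application and is automatic after one Cremona move (the move ends with $\mathbf{sort}$), and without it item~(2) as stated would fail, since a pure reordering can raise an individual coordinate.

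For item~(1) I would first record the exact volume identity. Writing $(\lambda_F',\lambda_B';\delta_1',\dots,\delta_k')=\mathbf{cremona}(v)$ and using $\delta_1'=\lambda_F-\delta_2$, $\delta_2'=\lambda_F-\delta_1$, $\delta_j'=\delta_j$ for $j\ge 3$, and $\lambda_B'=\lambda_B-\defect(v)$, a short expansion gives
$$\lambda_F'\lambda_B'-\tfrac12\sum_{i}(\delta_i')^2=\lambda_F\lambda_B-\tfrac12\sum_i\delta_i^2,$$
so the volume inequality is preserved verbatim. Positivity of the new coordinates is then immediate from $v\in\mathcal{C}_0$: one has $\lambda_F'=\lambda_F>0$, while $\delta_1'=\lambda_F-\delta_2>0$ and $\delta_2'=\lambda_F-\delta_1>0$ because $\lambda_F-\delta_i>0$; dually $\lambda_F-\delta_1'=\delta_2>0$, $\lambda_F-\delta_2'=\delta_1>0$ and $\lambda_F-\delta_j'=\lambda_F-\delta_j>0$ give the remaining $\lambda_F-\delta_i'>0$ conditions. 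Finally $\lambda_B'>0$ follows from the preserved volume inequality together with $\lambda_F>0$ and $\delta_i'>0$. Alternatively, item~(1) is immediate from Lemma~\ref{poslemma} and Corollary~\ref{cor:cremona}, which together say that $\mathbf{cremona}$ carries vectors encoding blowup forms to vectors encoding blowup forms, i.e.\ preserves $\mathcal{C}_0$.

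For item~(2), which I expect to be the only genuinely delicate point, observe that before sorting the new entries $a_1:=\lambda_F-\delta_2$, $a_2:=\lambda_F-\delta_1$, $a_j:=\delta_j$ satisfy $a_i\le\delta_i$ for every $i$: indeed $a_1-\delta_1=a_2-\delta_2=-\defect(v)<0$, and $a_j=\delta_j$ for $j\ge 3$. The subtlety is that $\mathbf{sort}$ may reshuffle these, so I must check that a coordinatewise inequality survives decreasing rearrangement. This follows from the order‑statistic identity that the $i$-th largest entry of a tuple is $\max_{|T|=i}\min_{j\in T}(\cdot)$: since $a_i\le\delta_i$ coordinatewise we have $\min_{j\in T}a_j\le\min_{j\in T}\delta_j$ for every index set $T$, hence $\delta_i'\le\delta_i$ once $\delta$ is already sorted. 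For the strict inequality, note $\sum_i\delta_i'=\sum_i a_i=\sum_i\delta_i-2\,\defect(v)<\sum_i\delta_i$ since $\defect(v)>0$, so at least one of the weak inequalities $\delta_i'\le\delta_i$ must be strict.

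Item~(3) then follows formally. If $v$ satisfies \eqref{eq:red5}, then $\defect(v)=\delta_1+\delta_2-\lambda_F\le 0$ makes step~(1) the identity, and $\delta_1\ge\dots\ge\delta_k$ makes $\mathbf{sort}$ the identity, so $v$ is fixed. Conversely, a fixed $v$ is in particular an output of the move, hence sorted, which gives $\delta_1\ge\dots\ge\delta_k$; and if $\defect(v)>0$ held, item~(2) would force $\sum_i\delta_i'<\sum_i\delta_i$ and thus $v'\ne v$, a contradiction, so $\defect(v)\le 0$, which is the remaining inequality of \eqref{eq:red5}.
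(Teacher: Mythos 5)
Your proof is correct and follows essentially the same route as the paper's (which only sketches the argument): preservation of $\lambda_F$ and of the ``volume'' $\lambda_F\lambda_B-\half\sum\delta_i^2$ gives item~(1), and items~(2) and~(3) are read off from the definition of the move. The one place you go beyond the paper --- checking via the rearrangement/order-statistic argument that the coordinatewise inequality $\delta_i'\le\delta_i$ survives the final $\mathbf{sort}$, under the (legitimate, and in all applications satisfied) convention that the input is already sorted --- is a worthwhile filling-in of a step the paper declares immediate.
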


\begin{proof}[Sketch of Proof]
Let $v=(\lambda_F,\lambda_B;\delta_1,\ldots,\delta_k) \in \mathcal{C}_{0}$ and $v'=(\lambda_F,\lambda'_B;\delta'_1,\ldots,\delta'_k)$ the vector obtained from $v$ by the Cremona move.
In proving part (1), we note that both the ``volume" $\lambda_F \lambda_B - \half \sum_{i=1}^{k}{\delta_i}^2$ and $\lambda_F$ are preserved by the Cremona move. 
Hence, since the ``volume" of $v'$ and $\lambda'_F=\lambda_F$ are positive and all other terms in the ``volume" are nonpositive, we get that $\lambda'_B>0$ as well.
Also, by Lemma \ref{noac} and since $v \in \mathcal{C}_{0}$, for all $i$, the numbers $\delta'_i$ and $\lambda_F-\delta'_i$ are positive as well. 
Part (2) and part (3) are immediate from the definition of the Cremona move.
\end{proof}

\begin{Notation}
Let $g>0$. 
A vector in  ${\R}^{2+k}$ ($k \geq2$) with all positive entries encodes a $g$-reduced cohomology class if it satisfies equation \eqref{eq:red5}; we will call such a vector {\bf $g$-reduced}.
\end{Notation}

\noindent Assume $k \geq 2$ and $g>0$. We will now use the Cremona move to obtain a $g$-reduced vector.

\begin{algorithm}[h]
\caption{ --- \textsc{CremonaReduce}.     \newline
$\circ$  Input  a vector $\mathbf{v}=(\lambda_F,\lambda_B ; \delta_1, \ldots , \delta_k) \text{ in the standard symplectic cone in }\R^{k+2}$ \newline 
$\circ$  Output a  {\bf $g$-reduced} vector $(\lambda_F',\lambda_B' ; \delta_1', \ldots , \delta_k') \text{ in the standard symplectic cone in }\R^{k+2}$}\label{alg:cremona}
	\begin{algorithmic}[1]
	         \State $\mathbf{sort}\big(\mathbf{v}\big)$
		\State \textbf{while} $\mathbf{defect}(\mathbf{v})> 0$ \textbf{do}
		\State \qquad \textbf{set} $\mathbf{v}= \mathbf{sort}\Big(   \mathbf{cremona}\big(\mathbf{v}\big) \Big)$
%		\State Add {\bf halt} if $\delta_k\leq 0$ or $\delta_k<0$?
		\State \textbf{end while loop}
		\State
		\State \textbf{return} $\mathbf{v}$
	\end{algorithmic}
\end{algorithm}

\begin{Lemma}\labell{finitesteps2}
The algorithm \textsc{CremonaReduce} terminates after finitely many steps.
\end{Lemma}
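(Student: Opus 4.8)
The plan is to exhibit a quantity that strictly decreases on every pass through the \textbf{while} loop and that can take only finitely many values, so that the loop cannot run forever. The natural candidate is the sum $\sum_{i=1}^{k}\delta_i$ of the last $k$ coordinates.

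First I would track what each step of the loop body $v\mapsto \mathbf{sort}\bigl(\mathbf{cremona}(v)\bigr)$ does to this sum. The $\mathbf{sort}$ step merely permutes the last $k$ entries and hence leaves $\sum_i\delta_i$ unchanged. The $\mathbf{cremona}$ step, which is applied exactly when the loop guard $\defect(v)>0$ holds, replaces $\delta_1,\delta_2$ by $\delta_1-\defect(v),\ \delta_2-\defect(v)$ and fixes $\delta_3,\dots,\delta_k$, so it sends $\sum_i\delta_i$ to $\sum_i\delta_i-2\,\defect(v)$. Since the loop is entered precisely when $\defect(v)>0$, the sum strictly decreases by $2\,\defect(v)>0$ at each iteration. (Equivalently, this is the content of Lemma~\ref{cremonaisgood}(2): a non-trivial Cremona move satisfies $\delta_i'\le\delta_i$ for all $i$ with strict inequality for at least one $i$, and the sum is invariant under sorting, so $\sum_i\delta_i$ strictly drops.)

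Next I would invoke Lemma~\ref{noac}: every value that ever appears among the last $k$ entries of an iterate of $v$ lies in the finite set $\{\delta_1,\dots,\delta_k,\ \lambda_F-\delta_1,\dots,\lambda_F-\delta_k\}$. Consequently each partial sum $\sum_i\delta_i$ arising during the run is a sum of $k$ elements of this single finite pool, and therefore the collection of values that $\sum_i\delta_i$ can assume throughout the algorithm is itself finite. The successive values of $\sum_i\delta_i$ produced by the iterations thus form a strictly decreasing sequence taking values in a finite set, which must be a finite sequence; hence the loop halts after finitely many steps.

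The one genuinely load-bearing point is the finiteness supplied by Lemma~\ref{noac}, and that is where I expect the only subtlety to lie: strict decrease alone is not enough, since a strictly decreasing sequence of positive reals can persist indefinitely (for instance $1,\tfrac12,\tfrac14,\dots$). It is precisely the absence of accumulation points — equivalently, the finiteness of the pool of attainable $\delta$-values — that rules out such an infinite descent and forces termination. I would close by noting, for completeness, that on exit we have $\defect(v)\le 0$ on a sorted vector, so the returned $v$ satisfies \eqref{eq:red5} and is indeed $g$-reduced, though this is a statement about correctness rather than termination.
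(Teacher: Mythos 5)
Your proof is correct and follows essentially the same route as the paper's: both hinge on the strict decrease supplied by the Cremona move (Lemma~\ref{cremonaisgood}(2)) together with the finiteness of the pool of attainable $\delta$-values from Lemma~\ref{noac}. The only cosmetic difference is that you package the decrease into the single monovariant $\sum_i\delta_i$ and conclude directly that a strictly decreasing sequence in a finite set must terminate, whereas the paper extracts a strictly decreasing, bounded-below subsequence of one coordinate and derives a contradiction with the no-accumulation-point clause of Lemma~\ref{noac}.
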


\begin{proof}
We prove this by contradiction.
Suppose that this process does not terminate for some vector $v$ in
the standard symplectic cone.
Then we get an infinite sequence $v^{(n)}$, for any $n \in \N$,
of vectors in the standard symplectic cone in ${\R}^{2+k}$.  That is, $v^{(1)}=\mathbf{sort}(v)$
and $v^{(n+1)}=\mathbf{sort}\left(\mathbf{cremona}\left(v^{(n)}\right)\right)$.
By part (2) of Lemma \ref{cremonaisgood}, 
for some $1 \leq i_0 \leq k$, the sequence ${\delta^{(n)}_{i_0}}$
has an infinite subsequence ${\delta^{(n_{\ell})}_{i_0}}$
that is strictly decreasing. 
Moreover the numbers ${\delta^{(n_{\ell})}_{i_0}}$ are all bounded below, as they are positive. 
Thus, the subsequence ${\delta^{(n_{\ell})}_{i_0}}$ has an accumulation point,
contradicting
Lemma~\ref{noac}.
\end{proof}

\begin{Remark}\label{remexproof}
Lemma \ref{finitesteps2} implies that Algorithm~\ref{alg:cremona} terminates. By item (3) of Lemma \ref{cremonaisgood} it 
produces a $g$-reduced vector. Moreover, by Corollary \ref{permute diff top} and Corollary \ref{cor:cremona}, if the input vector encodes the cohomology
 class of a blowup form $\omega$ on $M=(\Sigma \times S^2)_{k}$ or $M=(M_{\Sigma})_{k}$, then the output vector 
 encodes the cohomology class of a blowup form on $M$ that is diffeomorphic to $\omega$. 
This completes the
proof of the existence portion of Theorem~\ref{thmexu}. 
\end{Remark}

\begin{Remark}\labell{notapush}
In the case when  $g(\Sigma)=0$, i.e., $\Sigma=S^2$, if we apply Algorithm~\ref{alg:cremona} to a vector 
$(\lambda_F,\lambda_B ; \delta_1, \ldots , \delta_k)$ encoding a blowup form on 
$M=(S^2 \times S^2)_{k}$ or $M=(M_{S^2})_{k}$, the output vector $(\lambda'_F, \lambda'_B;\delta'_1,\ldots,\delta'_k)$ may 
encode a blowup-up form that is not $o$-reduced.  The trouble is that the condition 
$$
\left\{\begin{array}{ll}
\lambda'_F\leq \lambda'_B &\mbox{ if } M=(S^2 \times S^2)_{k}; \mbox{ or } \\
(\half \lambda'_F+\delta'_1)\leq \lambda'_B & \mbox{ if } M=(M_{S^2})_{k},
\end{array}\right.
$$ 
may not hold. 
In particular, Algorithm~\ref{alg:cremona}  
is {\bf not} the push forward of the algorithm in \cite[\S 2.16]{blowups} to obtain a $0$-reduced form given a 
blowup form on $(M_{S^2})_{k}$ under the map
$$(\lambda;\delta,\delta_1,\dots,\delta_k) \mapsto (\lambda_F,\lambda_B;\delta_1,\ldots,\delta_k)$$ 
 as in \eqref{eqlamdel}. 
\end{Remark}

% ========================================================
\section{Uniqueness of the $g$-reduced form}
\labell{sec:minimal set}
% ========================================================

Our goal in this section is to prove the uniqueness part of Theorem \ref{thmexu}.
 Let $M=(\Sigma \times S^2)_{k}$ or $M=(M_{\Sigma})_{k}$; assume that $g(\Sigma)>0$.
For any blowup form $\omega$ 
the set of exceptional classes of minimal area in $(M,\omega)$
only depends on the cohomology class $[\omega]$.
We denote this set by 
$$ \calE^v_{\min} ,$$
where $v \in \R^{2+k}$ is the vector
that encodes the cohomology class $[\omega]$.
If $k\geq 2$ and $[\omega]$ is in $g$-reduced form, then by  \eqref{Biran}, 
$E_k\in\calE^v_{\min}$.
We proceed to identify \emph{all} the elements of $\calE^v_{\min}$.

\begin{Notation}\labell{notuni}
 Let
$v = (\lambda_F,\lambda_B;\delta_1,\ldots,\delta_k)$ be the vector encoding the cohomology class of a blowup form $\omega$ on $M=(\Sigma \times S^2)_{k}$ or $M=(M_{\Sigma})_{k}$. If $k \geq 2$, assume that $[\omega]$ is in $g$-reduced form. 
In particular, $\delta_1 \geq \ldots \geq \delta_k$, and the
numbers $\lambda_F, \, \lambda_B, \delta_1,\ldots,\delta_k$ 
are positive.

We denote by
$j_v $
the smallest nonnegative integer $j$ such that
$\delta_{j+1} = \ldots = \delta_k$.  Thus, either $j_v=0$
and $\delta_1 = \ldots = \delta_k$,  or $1 \leq j_{v} \leq k-1$
and
$$ \delta_1 \geq \ldots \geq \delta_{j_v} > \delta_{{j_v}+1} 
   = \ldots = \delta_k.$$
\end{Notation}

As an immediate corollary of the identification of the exceptional classes in \eqref{Biran}, 
and the definition of a $g$-reduced form, we have the following proposition.

\begin{Proposition} \labell{thm:Emin} 
Let $(M,\omega)$ and $v$ be as in Notation \ref{notuni}.
Assume that $g(\Sigma)>0$.
\begin{itemize}
\item In the case of one blowup, $k=1$, we have the following possibilities.
\begin{enumerate}
\item Suppose that $\delta_1<\half \lambda_F$. Then $\calE^v_{\min}=\left\{E_{1}\right\}$, so  $\sharp \calE^v_{\min}=k$.
\item Suppose that $\delta_1>\half \lambda_F$. Then $\calE^v_{\min}=\left\{F-E_{1}\right\}$, so  $\sharp \calE^v_{\min}=k$.
\item Suppose that $\delta_1=\half \lambda_F$. Then $\calE^v_{\min}=\left\{E_{1}, F-E_{1}\right\}$,  so $\sharp \calE^v_{\min}=2k$.
\end{enumerate}

\item For $k \geq 2$ blowups, we have the following possibilities.
\begin{enumerate}
\item
Suppose that $\delta_1 \leq \half \lambda_F$.

   \begin{enumerate}
   \item Suppose that $\delta_k <\half \lambda_F$.
         Then $\calE^v_{\min} = \left\{ E_{j_v+1} , \ldots , E_k \right\}$, so  $\sharp \calE^v_{\min} \leq k$.
   \item Suppose that $\delta_k = \half \lambda_F$.
         Then  $\calE^v_{\min} = \bigcup_{i=1}^{k}\left\{E_i\right\} \cup \bigcup_{i=1}^{k}\left\{F-E_i\right\}$, so  $\sharp \calE^v_{\min}=2k$.
   \end{enumerate}
\item 
Suppose that $\delta_1 >\half \lambda_F$.
     \begin{enumerate}
   \item Suppose that $\lambda_F-\delta_1> \delta_k$.
         Then $\calE^v_{\min} = \{ E_{j_v+1}, \ldots, E_k \} $  and $j_v>0$, so  $\sharp \calE^v_{\min}\leq k-1$.
   \item Suppose that $\lambda_F-\delta_1= \delta_k$.
         Then $\calE^v_{\min} = \{ F-E_1, E_2, \ldots, E_k \} $, so  $\sharp \calE^v_{\min}=k$.
   \end{enumerate}
\end{enumerate}

\end{itemize}

\end{Proposition}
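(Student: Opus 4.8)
The plan is to read the symplectic areas of all exceptional classes straight off the encoding vector $v$ and then carry out the elementary comparison that isolates the minimum. By \eqref{Biran} the set $\calE(M)$ equals $\{E_1,\ldots,E_k,F-E_1,\ldots,F-E_k\}$, and the $\omega$-area of a class $A$ is $\frac{1}{2\pi}\langle[\omega],A\rangle$; so by the encoding conventions of Notation~\ref{basic notions} the area of $E_i$ is $\delta_i$ and, by linearity, the area of $F-E_i$ is $\lambda_F-\delta_i$. Thus $\calE^v_{\min}$ is found by minimizing the finite list $\{\delta_1,\ldots,\delta_k,\lambda_F-\delta_1,\ldots,\lambda_F-\delta_k\}$ and recording the minimizers.

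First I would pin down the two ``internal'' minima. Since $\delta_1\geq\cdots\geq\delta_k$, the least $E$-area is $\delta_k$, attained exactly by $E_{j_v+1},\ldots,E_k$ by the definition of $j_v$ in Notation~\ref{notuni}; dually $\lambda_F-\delta_1\leq\cdots\leq\lambda_F-\delta_k$, so the least $F-E$-area is $\lambda_F-\delta_1$. Everything then reduces to comparing $\delta_k$ with $\lambda_F-\delta_1$, which I would organize by the position of $\delta_1$ relative to $\half\lambda_F$.

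When $\delta_1\leq\half\lambda_F$ (Case~1) we have $\delta_i\leq\delta_1\leq\lambda_F-\delta_1\leq\lambda_F-\delta_j$ for all $i,j$, so the minimum lies among the $E_i$; if $\delta_k<\half\lambda_F$ the inequality $\delta_k<\half\lambda_F\leq\lambda_F-\delta_1$ is strict and the answer is $\{E_{j_v+1},\ldots,E_k\}$, while $\delta_k=\half\lambda_F$ forces $\delta_1=\cdots=\delta_k=\half\lambda_F$ and hence a $2k$-fold tie. When $\delta_1>\half\lambda_F$ (Case~2) I would invoke the $g$-reduced inequality $\delta_1+\delta_2\leq\lambda_F$ from \eqref{eq:red5}: it gives $\delta_2\leq\lambda_F-\delta_1<\delta_1$, so $j_v>0$ and $F-E_1$ is the \emph{unique} minimizer among the $F-E_i$. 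A further comparison of $\lambda_F-\delta_1$ with $\delta_k$ then splits into $\lambda_F-\delta_1>\delta_k$, where the minimum stays on the $E_i$ and gives $\{E_{j_v+1},\ldots,E_k\}$ with $j_v\geq 1$, and the equality $\lambda_F-\delta_1=\delta_k$, where $F-E_1$ ties the smallest $E_i$.

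The step demanding the most care is this last equality subcase: I must show the answer collapses to exactly $\{F-E_1,E_2,\ldots,E_k\}$, i.e.\ that $j_v=1$. Here the $g$-reduced hypothesis again does the work: combining $\delta_2\leq\lambda_F-\delta_1=\delta_k$ with the monotonicity $\delta_2\geq\delta_k$ forces $\delta_2=\cdots=\delta_k$, so $j_v=1$ and $E_{j_v+1},\ldots,E_k$ is precisely $E_2,\ldots,E_k$. The single-blowup case $k=1$ needs no reduced hypothesis and is the bare comparison of $\delta_1$ with $\lambda_F-\delta_1$, yielding $\{E_1\}$, $\{F-E_1\}$, or $\{E_1,F-E_1\}$ according to the sign of $\delta_1-\half\lambda_F$, matching items~(1)--(3). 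The cardinality statements then follow by counting the minimizers in each case.
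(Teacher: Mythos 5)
Your proposal is correct and takes essentially the same route as the paper, which states the proposition as an immediate corollary of the identification $\calE(M)=\{E_1,\ldots,E_k,F-E_1,\ldots,F-E_k\}$ in \eqref{Biran} together with the $g$-reduced inequalities \eqref{eq:red5}; you have simply written out the finite minimization over the areas $\delta_i$ and $\lambda_F-\delta_i$ that the paper leaves implicit. The details you supply (in particular the use of $\delta_1+\delta_2\leq\lambda_F$ to force $j_v>0$ in Case~2 and to collapse the equality subcase to $j_v=1$) are exactly the intended argument and are carried out correctly.
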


\noindent For the uniqueness of a $g$-reduced form, we will also need the following 
observations on symplectomorphic blowups of ruled manifolds.

\begin{Lemma} \labell{lemsymp again}
Consider $M=(\Sigma \times S^2)_{k}$ or $M=(M_{\Sigma})_{k}$.
Assume that $g(\Sigma)>0$.
Let $\omega$ and $\omega'$ be blowup forms on $M$
whose cohomology classes are encoded by the vectors
$ v = (\lambda_F,\lambda_B;\delta_1,\ldots,\delta_k) $
and
$ v' = (\lambda_F',\lambda_B';\delta_1',\ldots,\delta_k'),$
which are both  $g$-reduced.

Let $\varphi \colon (M,\omega) \to (M,\omega')$
be a symplectomorphism,
and let $\varphi_* \colon H_2(M) \to H_2(M)$
be the induced map on the homology.

\begin{enumerate}
\item  The isomorphism $\varphi_*$ sends $\calE(M)$ to itself.

\item
The isomorphism
$\varphi_*$ sends the set $\calE^{v}_{\min}$
to the set  $\calE^{v'}_{\min}$. If $k \geq 2$ then
\begin{equation} \labell{eqphimin again}
 \delta_k = \delta_k'.
\end{equation}
If $k=1$ then 
\begin{equation} \labell{deltak1again}
\min(\delta_1,\lambda_F-\delta_1)=\min(\delta'_1,\lambda'_F-\delta'_1).
\end{equation}
\item $\displaystyle{\lambda_F=\lambda'_F.}$

\item  
$\displaystyle{
  \lambda_B \lambda_F -\half \sum_{i=1}^{k} {\delta_i}^2
  = \lambda'_B \lambda'_F -\half\sum_{i=1}^{k} {\delta'_i}^2 .
}$
\end{enumerate}
\end{Lemma}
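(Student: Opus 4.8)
The plan is to exploit that $\varphi$ is a symplectomorphism, so $\varphi^*\omega'=\omega$ and $\varphi_*$ is an isometry of the intersection form that preserves $c_1(TM)$ (the common first Chern class of all blowup forms, by the discussion after Facts~\ref{h2facts}) and that preserves symplectic areas in the sense $\langle[\omega'],\varphi_*A\rangle=\langle\varphi^*[\omega'],A\rangle=\langle[\omega],A\rangle$ for every $A\in H_2(M)$. Granting these three invariances, all four parts follow by homological bookkeeping. First, for part (1) I would note that $\varphi$ carries embedded $\omega$-symplectic spheres of self-intersection $-1$ to embedded $\omega'$-symplectic such spheres; hence $\varphi_*$ sends the set of $\omega$-exceptional classes to the set of $\omega'$-exceptional classes, and by Lemma~\ref{calE and J} both of these equal the blowup-form-independent set $\calE(M)$. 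Thus $\varphi_*(\calE(M))=\calE(M)$.

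For part (2), area preservation together with part (1) shows immediately that $\varphi_*$ carries the subset of $\calE(M)$ of minimal $\omega$-area onto the subset of minimal $\omega'$-area, that is, $\varphi_*(\calE^v_{\min})=\calE^{v'}_{\min}$, and that these two minimal areas coincide. It then remains to read the minimal area off the $g$-reduced data. The areas of the classes in $\calE(M)=\{E_1,\dots,E_k,F-E_1,\dots,F-E_k\}$ are the $\delta_i$ and the $\lambda_F-\delta_i$; when $k\ge2$ the inequalities $\delta_1\ge\cdots\ge\delta_k$ and $\delta_1+\delta_2\le\lambda_F$ of \eqref{eq:red5} force $\delta_k\le\lambda_F-\delta_1\le\lambda_F-\delta_i$ for all $i$, so the minimum of all these areas is exactly $\delta_k$ (Proposition~\ref{thm:Emin} records the corresponding sets $\calE^v_{\min}$). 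Equating minimal areas gives $\delta_k=\delta_k'$; when $k=1$ the minimum is visibly $\min(\delta_1,\lambda_F-\delta_1)$, yielding \eqref{deltak1again}.

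For part (3) the tool is the intrinsic characterization of $F$ in Lemma~\ref{keepsF}: for $k\ge1$, $F$ is the unique class whose intersection with every element of $\calE(M)$ is zero and which pairs to $2$ with $c_1(TM)$. Since $\varphi_*$ is an intersection-form isometry permuting $\calE(M)$ (part (1)) and fixes $c_1(TM)$, the class $\varphi_*(F)$ satisfies both defining properties, so $\varphi_*(F)=F$; area preservation then gives $\lambda_F'=\tfrac1{2\pi}\langle[\omega'],F\rangle=\tfrac1{2\pi}\langle[\omega'],\varphi_*F\rangle=\tfrac1{2\pi}\langle[\omega],F\rangle=\lambda_F$. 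For part (4) I would observe that the displayed quantity is, up to the universal factor $(2\pi)^2$, the symplectic volume: a direct computation with the intersection numbers of Facts~\ref{h2facts} gives $\tfrac1{(2\pi)^2}[\omega]^2=2\bigl(\lambda_F\lambda_B-\half\sum_i\delta_i^2\bigr)$ for both $M=(\Sigma\times S^2)_k$ and $M=(M_\Sigma)_k$ (in the latter case the shift by $\half\langle\Omega,F\rangle$ in the definition of $\lambda_B$ is exactly what makes the two formulas agree). As $\varphi$ is an orientation-preserving diffeomorphism with $\varphi^*\omega'=\omega$, we have $[\omega]^2=[\omega']^2$, which is the asserted identity.

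Most of this is routine once the three invariances of $\varphi_*$ are in place; the step carrying the real content is part (2), where one must combine area preservation with \eqref{eq:red5} to see that the minimal exceptional area is precisely $\delta_k$ — this is exactly the input needed later for uniqueness, and it is where the $g$-reduced hypothesis $\delta_1+\delta_2\le\lambda_F$ is used. The one technical point I would be careful to justify is that $\varphi_*$ fixes $c_1(TM)$, which rests on $c_1(TM)$ being the common first Chern class of all blowup forms and hence $\varphi$-invariant.
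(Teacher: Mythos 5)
Your proof is correct and follows essentially the same route as the paper: part (1) and the first half of (2) from $\varphi$ carrying $\omega$ to $\omega'$, the area identities in (2) from Biran's identification \eqref{Biran} together with the $g$-reduced inequalities, part (3) from the intrinsic characterization of $F$ in Lemma~\ref{keepsF}, and part (4) from invariance of the symplectic volume. The paper states these steps tersely; you have merely filled in the same details (including the correct volume computation for both the trivial and non-trivial bundle cases).
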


\begin{proof}
Item (1) and the first part of (2)
follow immediately from the fact that $\varphi$ carries $\omega$ to $\omega'$.
The second part of (2) follows from \eqref{Biran} and the ``$g$-reduced'' assumption.
Item (3) follows from Lemma \ref{keepsF}.
Item (4) follows from the fact that the symplectomorphism
preserves symplectic volume, because
$$   \lambda_B \lambda_F -\half \sum_{i=1}^{k} {\delta_i}^2
  = \frac{1}{(2\pi)^2} \int_{M} \frac{\omega \wedge \omega}{2!}  $$
and a similar formula with $\omega'$.
\end{proof}

We now have all the ingredients to prove that the $g$-reduced form is unique.

\begin{proof}[Proof of the uniqueness part of Theorem \ref{thmexu}]
Recall that we have a manifold
$M$ that is a $k$-fold blowup, either $(M_{\Sigma})_{k}$ or $(S^2 \times \Sigma)_{k}$, where
$\Sigma$ has positive genus.
Let $\omega$ and $\omega'$ be blowup forms on $M$
whose cohomology classes are encoded by the vectors
$ v = (\lambda_F,\lambda_B;\delta_1,\ldots,\delta_k) $
and
$ v' = (\lambda'_F,\lambda'_B;\delta_1',\ldots,\delta_k'),$
and, if $k \geq 2$, are both in $g$-reduced form.
Suppose that $(M,\omega)$ and $(M,\omega')$ are symplectomorphic. We aim to show
that $v=v'$.

The case enumeration refers to the case enumeration in Proposition \ref{thm:Emin}.

\medskip\noindent\textbf{Suppose that $k \geq 1$ and  that the size of $\mathbf{\calE}^{v}_{\min}$ equals $2k$.
\ }

\noindent 
By item (2)  of Lemma \ref{lemsymp again},
the set $\calE^{v'}_{\min}$ also has $2k$ elements.
If $k=1$, by Proposition~\ref{thm:Emin}, both $v$ and $v'$ exhibit case (3).
If $k \geq 2$, by Proposition \ref{thm:Emin},
 both $v$ and $v'$ exhibit case (1b). 
In either case, we have that $v = (\lambda_F,\lambda_B ; \frac{\lambda_F}{2} , \ldots , \frac{\lambda_F}{2})$
and $v'=(\lambda'_F,\lambda'_B;\frac{\lambda'_F}{2},\ldots,\frac{\lambda'_F}{2})$.
From~\eqref{eqphimin again} we deduce that $\lambda_F=\lambda'_F$, hence by item (4)  of Lemma \ref{lemsymp again},
(and the fact that $\lambda_F>0$) also $\lambda_B=\lambda'_B$,
and thus $v=v'$.

\medskip\noindent\textbf{Suppose that the size of $\mathbf{\calE}^{v}_{\min}$ equals $k$.
\ }

\noindent By item (2) of Lemma \ref{lemsymp again},
the set $\calE^{v'}_{\min}$ also has $k$ elements.
If $k=1$ then Proposition \ref{thm:Emin} says that the vector $v$ 
also exhibits one of the cases (1) or (2),  and the vector $v'$ exhibits one of the cases (1) or (2).
When $k \geq 2$,  Proposition \ref{thm:Emin} implies that the vector $v$ exhibits one of the cases 
(1a) with $j_v=0$ or (2b); and the vector $v'$ also exhibits one of the cases (1a) or (2b).
For the purpose of what follows, if $k=1$, we will use (1a) to refer to (1), and (2b) to refer to (2).

The classes in $\calE^{v}_{\min}$ 
can be represented simultaneously by 
 $k$
pairwise disjoint embedded $\omega$-symplectic 
spheres. Indeed, by Lemma 
\ref{calE and J}  
 there exists an $\omega$-tamed almost complex structure $J$ such that each of the exceptional classes in $\calE^{v}_{\min}$ 
 is represented by an embedded $J$-holomorphic sphere. Since in cases (1a) and (2b), for each two classes in this set the intersection number is zero, we conclude, by positivity of intersections of $J$-holomorphic spheres \cite[Proposition 2.4.4]{nsmall}, that these $J$-holomorphic spheres are pairwise disjoint. 
 By  item (2) of Lemma \ref{lemsymp again}, the symplectomorphism sends  $k$ pairwise disjoint embedded $\omega$-symplectic spheres in the classes of $\calE^{v}_{\min}$ to  $k$ pairwise disjoint embedded $\omega'$-symplectic spheres in $\calE^{v'}_{\min}$. 
 Moreover, 
 the symplectomorphism between $(M,\omega)$ and $(M,\omega')$ 
 descends to a symplectomorphism between the blowdowns along $k$ such 
 spheres in $\calE^{v}_{\min}$ and along the corresponding $k$ spheres in $
 \calE^{v'}_{\min}$.

If the vector is in case (1a),
blowing down along the $k$ spheres gives a manifold diffeomorphic to
$S^2 \times \Sigma$ if $M=(S^2 \times \Sigma)_{k}$ and to $M_{\Sigma}$ if $M=(M_{\Sigma})_{k}$, by 
Lemma \ref{blowdown Ek}.
If the vector is in case (2b),
blowing down along the $k$ spheres gives a manifold diffeomorphic to $M_{\Sigma}$ if $M=(\Sigma \times S^2)_{k}$ and to $\Sigma \times S^2$ if $M=(M_{\Sigma})_{k}$, by Lemma \ref{blowdown Ek} and Corollary \ref{corbdowne12 prime}.
Because
the manifolds $S^2 \times \Sigma$ and $M_{\Sigma}$ are not diffeomorphic,
either both  $v$ and $v'$ are in case (1a)
or both are in case (2b). In either case,
$ \calE^{v}_{\min} = \calE^{v'}_{\min}.$

By item (3) in Lemma \ref{lemsymp again}, $\lambda_F=\lambda'_F$.
Moreover, if $k\geq 1$ and both vectors are in case (1a) of Theorem~\ref{thm:Emin}, then 
$v = (\lambda_F,\lambda_B;\delta,\ldots,\delta)$
and $v' = (\lambda_F,\lambda'_B;\delta',\ldots,\delta')$,
and if $k \geq 1$ and both vectors are in case (2b),
$v = (\lambda_F,\lambda_B;\lambda_F-\delta,\ldots,\delta)$
and $v' = (\lambda_F,\lambda'_B;\lambda_F-\delta',\ldots,\delta')$.
In each of these cases, we  conclude, by \eqref{eqphimin again}, that $\delta=\delta'.$
Then, for $k \geq0$, by Item (4) of Lemma \ref{lemsymp again} (and the fact that $\lambda_F>0$) we get that $\lambda_B=\lambda'_B$ hence $v=v'$.

\medskip\noindent\textbf{Suppose that $\mathbf{k \geq 2}$
and that the size of $\mathbf{\calE}^{v}_{\min}$ is smaller or equal to $k-1$.
\ }

\noindent By item (2) of Lemma \ref{lemsymp again},
the set $\calE^{v'}_{\min}$ has size at most $k-1$.
Then by Proposition \ref{thm:Emin}
each of the  vectors $v$ and $v'$  either exhibits case (1a) or  case (2a). So
 \begin{equation*} \labell{eminblowdown}
 \calE^{v}_{\min} = \left\{ E_{{j_v}+1} , \ldots , E_k \right\}
  =  \calE^{v'}_{\min} \text{  where }1 \leq  j_v = k-\sharp  \calE^{v}_{\min}.
 \end{equation*}
By \eqref{eqphimin again}, 
we get $\delta_i={\delta'}_i$ for $j_{v} + 1 \leq i \leq k$.

Since $\omega$ and $\omega'$ are blowup forms, the pairwise disjoint
exceptional classes $ E_{{j_v}+1} , \ldots , E_k $
can be represented simultaneously
by pairwise disjoint embedded $\omega$--symplectic  or $\omega'$--symplectic
spheres.
By Lemma \ref{blowdown Ek}, blowing down along these spheres
in $(M,\omega)$ and in $(M,\omega')$
gives manifolds with a blowup form that is encoded by the vector
$\hat{v} = (\lambda_F,\lambda_B;\delta_1,\ldots,\delta_{j})$
and the vector $\hat{v}' = (\lambda'_F,\lambda'_B;\delta'_1,\ldots,\delta'_{j}).$
By ``uniqueness of blowdown" (Lemma \ref{bdownunique}) 
the resulting manifolds are symplectomorphic.
Because the cohomology classes encoded by vectors $\hat{v}$ and $\hat{v}'$ are in $g$-reduced form,
we may proceed by induction.  We note that the case $k=0$ is included in the case where the size
of $\mathbf{\calE}^{v}_{\min}$ equals $k$.
\end{proof}

Since Algorithm~\ref{alg:cremona} and Lemma~\ref{finitesteps2} 
prove that a $g$-reduced form always exists, see Remark \ref{remexproof}, the proof of Theorem~\ref{thmexu},
the existence and uniqueness of a $g$-reduced form, is now complete.

\appendix
\section{Proof of Lemma \ref{martin}}
\label{appendix1}

In the proof we will use the following corollary of the positivity of intersections of $J$-holomorphic curves, see \cite[Proposition 2.3]{algorithm}.

\begin{Lemma} \labell{positivity of intersections corollary}
Let $(M,\omega)$ be a symplectic four-manifold. 
Let $A$ and $B$ be homology classes in $H_2(M)$ 
that are linearly independent (over $\R$).
Suppose that $\GW(B) \neq 0$,
that $c_1(TM)(A) \geq 1$,
and that there exists an almost complex structure $J_0 \in   \calJ_\tau(M,\omega)$
such that the class $A$ is represented by a $J_0$-holomorphic sphere.
Then the intersection number $A \cdot B$ is nonnegative.
\end{Lemma}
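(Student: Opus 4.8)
The plan is to derive the inequality $A\cdot B\geq 0$ from the positivity of intersections of $J_0$-holomorphic spheres, after using $\GW(B)\neq 0$ and Gromov compactness to represent $B$ by a $J_0$-holomorphic curve that can be paired against the given representative of $A$. Concretely, I would fix the given simple $J_0$-holomorphic sphere $C_A$ in the class $A$, and choose a sequence $J_\nu\to J_0$ of generic (regular) structures in $\calJ_\tau(M,\omega)$, together with a generic configuration of $c_1(TM)(B)-1$ points. Since $\GW(B)\neq 0$, for each $\nu$ there is a $J_\nu$-holomorphic sphere in the class $B$ through these points; by Gromov's compactness theorem \cite[1.5.B]{gromovcurves} a subsequence converges to a $J_0$-holomorphic cusp-curve in the class $B$ passing through the same points.

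Passing to the images of the bubble components, this cusp-curve is a finite union of distinct simple $J_0$-holomorphic spheres $C_1,\dots,C_r$, with classes $A_l=[C_l]$ of positive $\omega$-area, and $B=\sum_{l=1}^{r}k_l A_l$ for integers $k_l\geq 1$. Consequently $A\cdot B=\sum_{l=1}^{r}k_l\,(A\cdot A_l)$, and it suffices to show each summand is nonnegative. Whenever $C_l\neq C_A$, the curves $C_A$ and $C_l$ are distinct simple $J_0$-holomorphic spheres with no common component, so positivity of intersections \cite[Proposition 2.4.4]{nsmall} gives $A\cdot A_l=C_A\cdot C_l\geq 0$.

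The one term not controlled this way is the summand (if present) for which $C_l=C_A$, i.e.\ $A_l=A$; handling it is where the hypotheses $c_1(TM)(A)\geq 1$ and the linear independence of $A$ and $B$ enter, and I expect this to be the main obstacle. By the adjunction formula for the simple sphere $C_A$ one has $A\cdot A=c_1(TM)(A)-2+2\delta$ with $\delta\geq 0$, so this term is negative only in the exceptional case $c_1(TM)(A)=1$, $\delta=0$, $A\cdot A=-1$; otherwise $A\cdot A\geq 0$ and we are done. To exclude a bad exceptional component I would use the point constraints carried by $\GW(B)\neq 0$: the $c_1(TM)(B)-1$ generic points must all lie on the cusp-curve, a rigid exceptional sphere in the class $A$ (with $c_1(TM)(A)=1$) can absorb none of them, and a count of how many generic points the remaining components $\sum_{l\geq 2}k_lA_l$ can carry --- at most $\sum_{l\geq 2}\max(c_1(TM)(A_l)-1,0)$ --- falls short of $c_1(TM)(B)-1$, a contradiction that rules out $C_A$ as a component and leaves all summands nonnegative. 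This last bookkeeping, in which $c_1(TM)(A)\geq 1$ and linear independence are essential, is the substance of the cited \cite[Proposition 2.3]{algorithm}, whose argument carries over to the positive-genus setting.
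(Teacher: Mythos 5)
The paper does not actually prove this lemma: it is quoted from \cite[Proposition 2.3]{algorithm}, so there is no in-paper argument to measure yours against. Your skeleton is the standard and correct first half of that argument: Gromov compactness applied to generic $J_\nu\to J_0$ yields a $J_0$-holomorphic cusp-curve decomposition $B=\sum_l k_lA_l$ into simple components; positivity of intersections handles every component geometrically distinct from the representative of $A$; and adjunction isolates the only dangerous case as $c_1(TM)(A)=1$, $A\cdot A=-1$ with an embedded representative. (One small omission: the hypothesis does not say the $J_0$-sphere in class $A$ is simple, so you should first pass to the underlying simple sphere, in class $A'$ with $A=dA'$, and note that $c_1(TM)(A)\geq 1$ forces $c_1(TM)(A')\geq 1$.)

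The genuine gap is in the exclusion step, which you run at the fixed $J_0$. The bound ``each remaining component carries at most $\max(c_1(TM)(A_l)-1,0)$ of the generic points'' is a statement about the \emph{expected} dimension $2(c_1(TM)(A_l)-1)$ of the moduli space of simple spheres in class $A_l$; it is valid only for generic $J$, and $J_0$ is given and cannot be assumed generic --- the appearance of a cusp-curve is precisely a symptom of its degeneracy, and for such $J_0$ a simple sphere of low or non-positive Chern number may move in a family of excess dimension and absorb many generic points. Moreover, even granting the bound, the arithmetic you assert does not close: components with $c_1(TM)(A_l)\leq 0$ contribute no available points but also decrease $c_1(TM)(B)-1$, and when $c_1(TM)(B)=1$ there are no point constraints at all, so nothing can ``fall short.'' The standard repair is to note that in the dangerous case the curve representing $A'$ is an embedded sphere with $c_1=1$, hence automatically regular (Hofer--Lizan--Sikorav), so it persists as an embedded $J'$-holomorphic $(-1)$-sphere for every $J'$ near $J_0$; choosing such a $J'$ generic and rerunning the whole argument there, every simple component of the $B$-cusp-curve has $c_1\geq 1$ and contains at most $c_1-1$ of the generic points, and the count then forces the cusp-curve to be the single component in class $A'$ with multiplicity one, i.e.\ $B=A'$, contradicting the linear independence of $A$ and $B$. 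Without this perturbation to a generic almost complex structure, your final contradiction is not established.
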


\noindent We now give a proof of Lemma \ref{martin}.

\begin{proof}[Proof of Lemma \ref{martin}]
Assume that $g=g(\Sigma)>0$. 
If  $M=(\Sigma \times S^2)_{1}$ then (by \eqref{Biran}) there are two cases:
\begin{itemize}
\item[a.] The class $E_1$ is of minimal area in $\calE(M)$. In this case, denote
$$\hat{B}:=B,\,\,\,\hat{E}:=E_{1}.$$
\item[b.] Otherwise, the class $F-E_{1}$ is of minimal area in $\calE(M)$. 
Denote
$$\hat{B}:=B+F-E_{1}, \,\,\, \hat{E}:=F-E_{1}.$$
\end{itemize}
If $M=(M_{\Sigma})_{1}$, then there are also two cases:
\begin{itemize}
\item[a.] The class $F-E_{1}$ is of minimal area in $\calE(M)$. In this case, denote
$$\hat{B}:=B_{-1}+F-E_{1},\,\,\, \hat{E}:=F-E_{1}.$$
\item[b.] Otherwise,  the class $E_{1}$ is of minimal area in $\calE(M)$. 
Denote
$$\hat{B}:=B_{1}, \,\,\, \hat{E}:=E_{1}.$$
\end{itemize}
Note that the classes $\hat{B}, F, \hat{E}$, form a $\Z$-basis to $H_2(M)$.

Let $\omega$ be a blowup form on $M$. 
For simplicity, we normalize $\omega$ such that $\frac{\omega(F)}{2\pi}=1$,
and we denote $u=\frac{\omega(\hat{B})}{2\pi}$ and $c = \frac{\omega(\hat{E})}{2\pi}$.  Then, since both $E_1$ and $F-E_{1}$ 
are exceptional classes and by the definition of $\hat{E}$ and $\hat{B}$, 
$$ 0< c \leq \half,$$
and $$u >0.$$ Moreover, in case (b), $u > \half$. 
Also, in case (a), $\hat{B} \cdot \hat{B}=0$ and $c_{1}(TM)(\hat{B})=2-2g$ and in case (b), $\hat{B} \cdot \hat{B}=1$ and $c_{1}(TM)(\hat{B})=3-2g$. In both cases, $\hat{B} \cdot F=1$ and $\hat{B} \cdot \hat{E}=0$.
  
Fix $J \in \calJ_\tau(M,\omega)$. 
To start, suppose that $A$ is a homology class that is represented
by a simple $J$-holomorphic sphere. 
Write $$A = p\hat{B} + qF - r\hat{E},$$ with $p,\, q,\, r$ integers.
We claim
 that the coefficient $p$ of $\hat{B}$ is nonnegative
 and that if this coefficient is zero then $A$
 is one of the classes $F$, $\hat{E}$, and $F-\hat{E}$.

Indeed, the adjunction inequality \cite[Cor.~E.1.7]{nsmall} says that
$$ 0 \leq 2 + A \cdot A - c_1(TM)(A). $$
In case (a), $A \cdot A=2pq-r^2$ and $c_{1}(TM)(A)=(2-2g)p+2q-r$, so we get
\begin{equation} \labell{eqmara}
0 \leq 2(p-1)(q-1)-r(r-1)+2gp 
\end{equation}
In case (b), $A \cdot A = p^2+2pq-r^2$ and $c_{1}(TM)(A)=(3-2g)p+2q-r$, so we get
\begin{equation}\labell{eqmarb}
0 \leq 2(p-1)(q-1)-r(r-1)+p^2+2gp-p
\end{equation}

 Because $A$ is represented
by a $J$-holomorphic sphere for an $\omega$-tamed $J$, the symplectic area of $A$ is positive, i.e.,
\begin{equation} \labell{eqmar2}
 0 < up + q - cr.
\end{equation}
Also note that, because $r$ is an integer,
$r (r-1) \geq 0$ with equality only if $r=0$ or $r=1$.

By Lemma \cite[Lemma 1.2]{pinso},
the minimal exceptional class $\hat{E}$ is represented by a simple $J$-holomorphic sphere, hence, and by the assumption on $A$ and the positivity of intersections of $J$-holomorphic curves in almost complex four-manifolds \cite[Proposition 2.4.4]{nsmall}, we get that either $A=\hat{E}$, i.e. $p=0, \,q=1, \, r=1$, or $r=A \cdot \hat{E} \geq 0$. Assume the latter case.

Assume that $p \leq 0$. 
\begin{itemize}
\item In case (a), first assume $2q-r \geq 1$. Thus (since $g$ is positive and $p\leq 0$) we have $c_{1}(TM)(A) = (2-2g)p+2q-r \geq 2q-r \geq 1$, so, by Lemma \ref{positivity of intersections corollary}, we get $p=A \cdot F \geq 0$; hence $p=0$ and, by \eqref{eqmara}, $-2q+2 \geq r(r-1) \geq 0$, which yields, since $q\geq \half (r+1)>0$ and $r(r-1) \geq 0$, that either $q=1$ and $r=0$ or $q=1$ and $r=1$, i.e., either $A=F$ or $A=F-\hat{E}$. Now assume $2q-r<1$, i.e., $2q-1\leq r-1$.
By \eqref{eqmar2}, and since $-p,\, r,\, c\,, u$ are all nonnegative, we have $q>cr-up\geq 0$. So $0 \leq q-1$ and $0 \leq 2(q-1)<(r-1)$. We deduce then that 
\begin{align*}
2(p-1)(q-1)-r(r-1)+2gp \nonumber \\
&< \, 2(p-1)(q-1)-r(2(q-1))+2gp \nonumber \\
&= \, 2(q-1)(p-1-r)+2gp \leq 0 \nonumber
\end{align*} contradicting \eqref{eqmara}.

\item In case (b), first assume $p+2q-r\geq 1$. Thus $c_{1}(TM)(A)=(2-2g)p+p+2q-r \geq p+2q-r \geq 1$, so, by Lemma \ref{positivity of intersections corollary}, we get $p=A \cdot F\geq 0$; hence it follows from \eqref{eqmarb} as before that either $A=F$ or $A=F-\hat{E}$.
Now assume $p+2q-r<1$, i.e., $p+2q-1<r$.
By \eqref{eqmar2}, and since $-p,\, r,\, c$ are all nonnegative and $u>\half$, we have $0<up+q-cr<\half p+q$. So $0 \leq   p+2q-2< r-1$. We deduce then that 
\begin{align*}
2(p-1)(q-1)-r(r-1)+p^2+2gp-p 
&= \,(p-1)(2q-2+p)-r(r-1)+2gp \nonumber \\
&< \, (p-1)(2q-2+p)-r(2q-2+p)+2gp \nonumber \\
&= \, (p-1-r)(2q-2+p)+2gp \leq 0   \nonumber
\end{align*}
contradicting \eqref{eqmarb}.

\end{itemize}

Having proved the claim, we proceed to prove that there exists
an embedded $J$-holomorphic sphere 
in the class $F-\hat{E}$. This will complete the proof.

By Lemma \ref{calE and J}, we have $\GW(F-\hat{E}) \neq 0$, hence by Gromov's compactness theorem, we can write
$F-\hat{E}= C_1 + \ldots + C_N$ where each $C_i$ is represented
by a simple $J$ holomorphic sphere.
We would like to show that $N=1$;
the simple $J$ holomorphic sphere in the class $F-\hat{E}=C_1$
would then be embedded by the adjunction formula  \cite[Cor.~E.1.7]{nsmall}.
Write each $C_i$ as a combination of $\hat{B}$, $F$, and $\hat{E}$. 
By the previous claim, all the coefficients of $\hat{B}$ are nonnegative.
Because they sum to zero, they must all be equal to zero.
Applying the previous claim again, every $C_i$
is either $F$, or $\hat{E}$, or $F-\hat{E}$.
In particular, all the coefficients of $F$ are nonnegative integers.
Because they sum to $1$, the coefficient of $F$ in all but one of the $C_i$-s are equal to zero, and in one of the $C_i$-s, say in $C_1$, the coefficient of $F$ is equal to $1$, so $C_1$ is either $F$ or $F-\hat{E}$. Because the coefficients of $\hat{E}$ sum to $-1$, at least one of the $C_i$-s is $F-\hat{E}$. We conclude that $C_1=F-\hat{E}$, and $N=1$.   
\end{proof}

\section{Algorithm to Count Circle Actions}\label{tair}

\begin{center}
{\bf by Tair Pnini}\footnote{\textsc{Dept.\ of Mathematics, Physics, and Computer Science, University of Haifa,
at Oranim, Tivon 36006, Israel}\newline \textit{E-mail address}: \texttt{tair.pnini@gmail.com}}
\end{center}

\subsection*{Introduction}
This is a ``greedy" algorithm.
It starts by creating the set of Graphs determined by $\lambda_F, \lambda_B$.
Then, at each stage \textsc{BlowupGraph} creates the set of Graphs (up to equivalence) which may be obtained by performing a 
blowup in the given size on a Graph from the set received in the previous step. 
The output of the program is the number of Graphs in the set  at the final stage.
In order to optimize the program, we use data structures designed to reduce the number of tests for equivalence between pairs of
Graphs and to optimize the equivalence test.

\subsection*{Data Structure}
For each Graph we arbitrarily define one of the two fat vertices to be the bottom fat vertex and the other to be the top fat vertex. In addition, we keep the distance between these two vertices and define it as the \textbf{Height} of the Graph, $\lambda_F$ in this case.
We relate to different paths between the two fat vertices as \textbf{Chains}. A Chain will be represented by a doubly linked list of rational positive numbers, consists of the serial representation of the path, built from vertices and edges that appear one by one alternately. Each vertex is encoded by its distance from the bottom fat vertex and each edge is encoded by its label. The list representing each Chain starts with the lowest non-fat vertex in the Chain and ends with the highest non-fat vertex in the Chain (since edges between a fat vertex and a non-fat vertex are all labeled 1, we do not need to store them). For each list we will keep its first and its last nodes. In addition, every Chain in the Graph will receive a unique serial ID number, so that in a Graph with $n$ Chains the IDs would be from $0$ to $(n-1)$. The Chains of a Graph are kept in an array so that each Chain is put in its ID number cell of the array.

We define two different sorting methods on Chains; \textsc{SortByStart} and \textsc{SortByEnd}. Sorting by start is a lexicogaphic sort in which each node of the list is considered a letter. Sorting by end is defined by the same principle, except that the lists is read backwards and the value encoding each vertex is its distance from the top fat vertex (equals the Height of the Graph minus its distance from the bottom). Notice that sorting the Chains of a Graph by end will give the same result as sorting the Chains of the flipped Graph by start.
For each Graph we will keep two additional arrays, keeping the IDs of the Chains. Both arrays keep the IDs in order, according to the corresponding Chains sort; one according to the ``sort by start" and the other according to the ``sort by end". See Figure \ref{fig:data structure}.

\begin{figure}[ht]
\begin{tabular}{@{}c@{}}{\includegraphics[height=7.1cm]{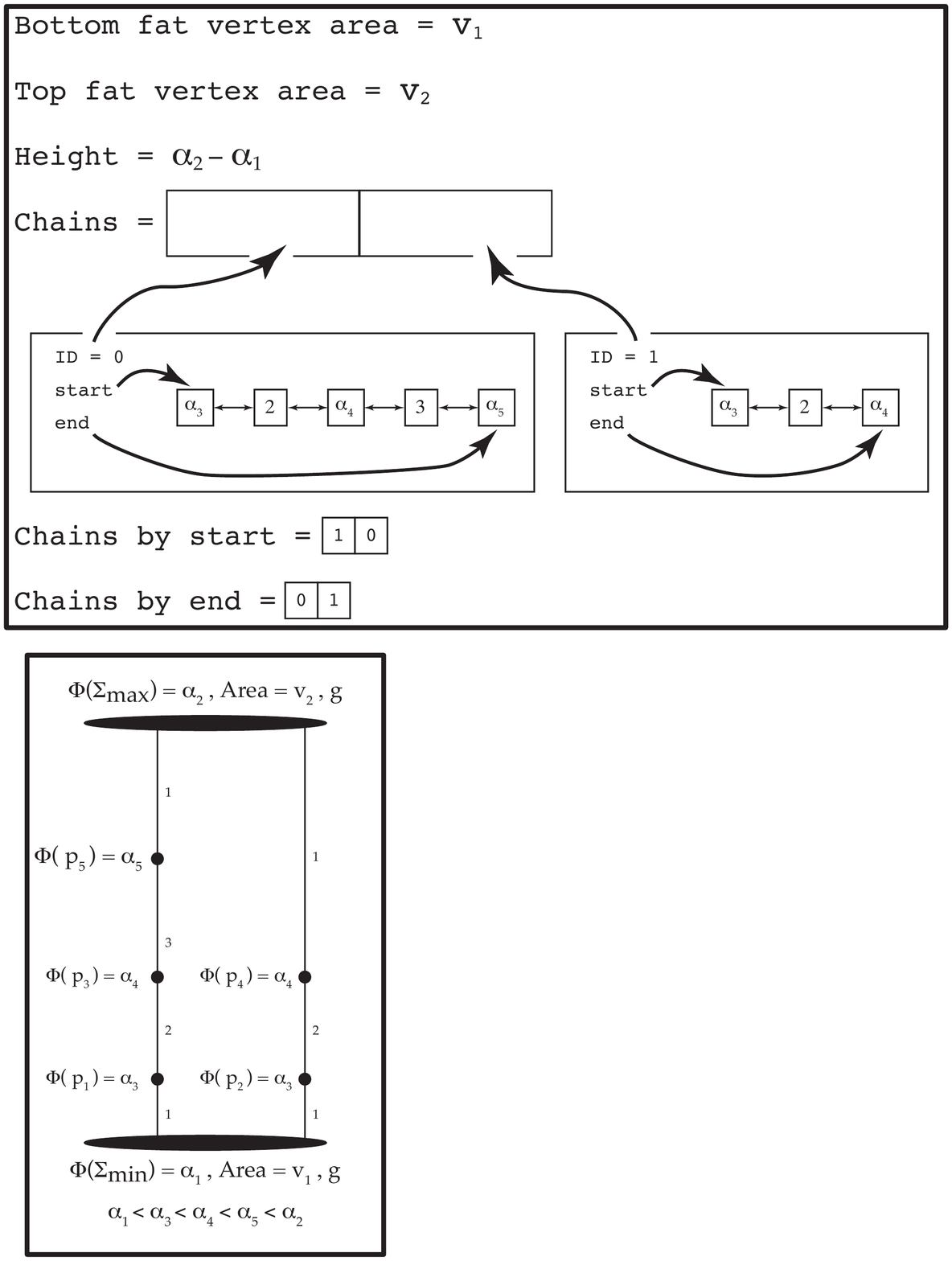}}\end{tabular}
 \hskip 1cm
\begin{tabular}{@{}c@{}}{\includegraphics[height=7cm]{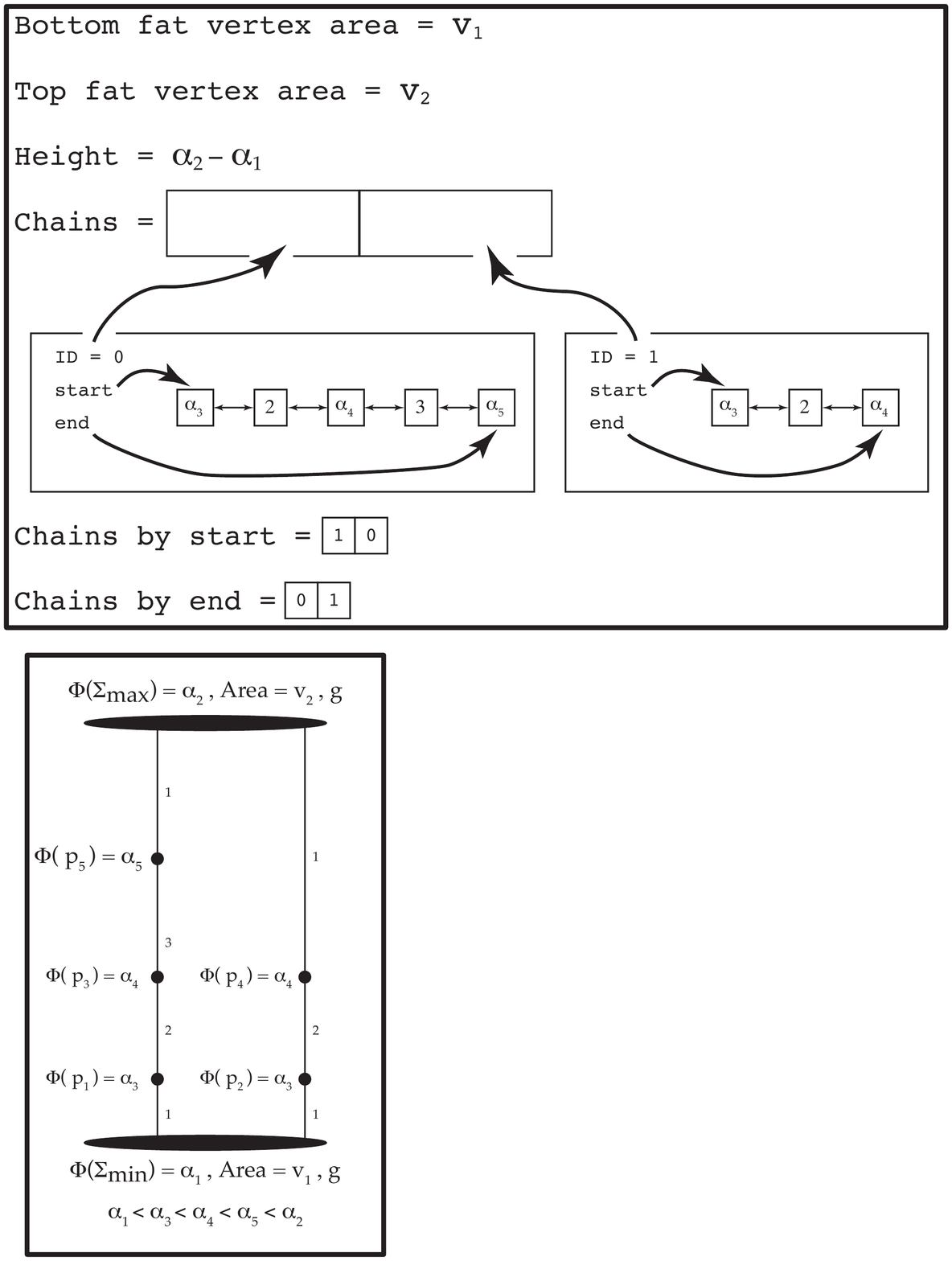}}\end{tabular}

\caption{{\bf On the left} is a decorated graph.
{\bf On the right} is the corresponding data structure.}
\label{fig:data structure}

\end{figure}

Another structure we use is referred to as \textbf{``Graphs Tree"}. It is a balanced binary search tree, storing a set of non-equivalent Graphs. Each node in the tree contains two fields - a key field and a field containing a linked list of non-equivalent Graphs of that key.
The key of a Graph is encoding the areas of the two fat vertices in the Graph - the larger area first, and the number of the Chains in the Graph. Notice that two Graphs may be equivalent only if they have the same key, meaning that they are stored in the same Graphs Tree node.
The operation of adding a Graph to the Graphs Tree would be done only if there is no equivalent Graph already kept in the Graphs Tree.

\subsection*{Algorithms}
The main algorithm counts the number of non-equivalent Graphs that can be produced as blowups of sizes $\delta_1,\dots,\delta_n$ from the
Graphs determined by $\lambda_B$ and $\lambda_F$.  It calls on the remaining algorithms defined in this section.

\begin{algorithm}[h]
\caption{ --- \textsc{CountGraphs} $(\lambda_F,\lambda_B;\delta_1,\delta_2,\dots,\delta_n)$.     \newline
$\circ$  \textit{Input}:  Two positive real numbers $\lambda_F, \lambda_B;$ and $n$ positive real numbers $\delta_1,\delta_2,\ldots,\delta_n$, 
in decreasing order. \newline 
$\circ$  \textit{Output}: The number of non-equivalent Graphs that can be created by performing the blowups sequentially, 
starting from the Graphs that are determined by $\lambda_B$ and $\lambda_F$.}\label{alg:CountGraphs}
	\begin{algorithmic}[1]
	         \State \textbf{Create} a Graphs Tree structure $gt$ (as described in the data structures section) of the Graphs defined by $\lambda_B$, $\lambda_F$ using \textsc{Graphs}$(\lambda_F, \lambda_B;)$.
	         
	         \State \textbf{Create} a Graphs Tree structure $t$ of all of the non-equivalent Graphs obtained from performing the blowups of sizes $\delta_1, \delta_2,\ldots, \delta_n$ sequentially, in every possible way, on each of the Graphs in $gt$, using \textsc{BlowupSet}$(gt,\delta_1, \delta_2,\ldots, \delta_n)$.
	         
	         \State \textbf{Return} the number of Graphs in $t$.

	\end{algorithmic}
\end{algorithm}

  \newpage

\begin{algorithm}[h]
\caption{ --- \textsc{Graphs} $(\lambda_F, \lambda_B;)$     \newline
$\circ$  \textit{Input}:  Two positive real numbers $\lambda_F, \lambda_B$  \newline 
$\circ$  \textit{Output}: Graphs Tree structure $gt$, storing the Graphs defined by $\lambda_B$ and $\lambda_F$}\label{alg:Graphs}
	\begin{algorithmic}[1]
	         \State \textbf{for} $i=0$ to $\left \lceil{\frac{\lambda_B}{\lambda_F}} \right \rceil-1$         
	         \State $\phantom{MO}$ \textbf{Create}  a Graph with the following data:\newline
$\phantom{MOVE}$ \texttt{bottom fat vertex area =} $\lambda_B+i\lambda_F$\newline
$\phantom{MOVE}$ \texttt{top fat vertex area =} $\lambda_B-i\lambda_F$\newline
$\phantom{MOVE}$ \texttt{Height =} $\lambda_F$\newline
$\phantom{MOVE}$ \texttt{Chains =} none %\newline
	         
\State $\phantom{MO}$ \textbf{Add} the Graph created to the Graphs Tree structure $gt$
\State {\bf Return} $gt$
	\end{algorithmic}
\end{algorithm}

\begin{algorithm}[h]
\caption{ --- \textsc{BlowupSet} $(gt, \delta_1, \delta_2,\ldots, \delta_n)$    \newline
$\circ$  \textit{Input}:  A Graphs Tree structure $gt$ and $n$ positive real numbers $\delta_1,  \delta_2, \ldots, \delta_n$ in decreasing order. \newline 
$\circ$  \textit{Output}:  Graphs Tree structure $t$, storing the non-equivalent Graphs that can be created by performing the blowups sequentially, starting from the Graphs in $gt$.}\label{alg:BlowupSet}
	\begin{algorithmic}[1]
\State \textbf{for} $i=1$ to $n$
\State $\phantom{MO}$ \textbf{Define} $t$ as a new empty Graphs Tree 
\State $\phantom{MO}$ \textbf{for} each Graph $g$ in $gt$
\State $\phantom{MOVE}$ \textbf{Create} all the Graphs that can be obtained by performing a blowup 

$\phantom{Mi}$ of size $\delta_i$  on $g$  (perform the blowup in every possible way in $g$), 

$\phantom{Mi}$ using \textsc{BlowupGraph}$(g,\delta_i)$.

\State $\phantom{MOVE}$ \textbf{Add} each Graph that have been created in the previous step 

$\phantom{Mi}$ to the Graphs Tree $t$ using \textsc{AddGraph}$(g,t)$ (an algorithm that

$\phantom{Mi}$  ensures that a Graph would be added if and only if there is no 

$\phantom{Mi}$ equivalent Graph in the Graphs Tree).
\State $\phantom{MO}$ \textbf{Set} $gt=t$
\State \textbf{Return} $gt$ 
	\end{algorithmic}
\end{algorithm}

  \newpage

\begin{algorithm}[h]
\caption{ ---  \textsc{BlowupGraph}$(g,\delta)$    \newline
$\circ$  \textit{Input}:  A Graph $g$ and a positive number $\delta$ \newline 
$\circ$  \textit{Output}:  A list $\ell ist$ of all the Graphs can be obtained by performing a blowup of size $\delta$ on the Graph $g$}\label{alg:BlowupGraph}
	\begin{algorithmic}[1]
\State \textbf{if} a blowup can be performed at the bottom fat vertex 

(that is,  if $\delta < \mathtt{bottomFatVertexArea}$) \textbf{then}

\State $\phantom{MO}$ \textbf{Create} a deep copy $g'$ of Graph $g$. Let $n$ be the number of Chains 

in $g$, make the size of the arrays in $g'$ (Chains, ChainsByStart, ChainByEnd) 

be $n+1$.

\State $\phantom{MO}$ \textbf{Subtract} $\delta$ from the area of the bottom fat vertex.
\State $\phantom{MO}$ \textbf{Add} a new Chain with one node $\delta$ to $g'$. Let $n$ be the number of Chains 

in $g$, the ID of the new Chain will be $n$ and a pointer to the new Chain

will be placed in Chains[$n$].

\State $\phantom{MO}$ \textbf{Use binary search and the sorting methods} defined in the data 

structure section to find the right place for the ID of the new 

Chain in the ChainsByStart  and ChainsByEnd arrays and place 

it there (move other IDs to the right if needed to ``make place").

\State $\phantom{MO}$ \textbf{Add} $g'$ to the $list$.

\State \textbf{if} a blowup can be performed on the top fat vertex ($\delta < \mathtt{topFatVertexArea}$) \textbf{then}

\State $\phantom{MO}$ \textbf{Do the same} as when a blowup can be performed at the bottom 

fat vertex, except subtract $\delta$ from the top fat vertex area and the new Chain 

will have the node height { \color{Black}{$-\delta$}}

\State \textbf{for} every Chain $chain$

\State $\phantom{MO}$ \textbf{for} every node $v$ that represents a vertex in $chain$

\State $\phantom{MOVE}$ \textbf{if} a monotone blowup can be made on the vertex \textbf{then}

\State $\phantom{MOVE IT}$ \textbf{Create} a new Chain replacing to $chain$ with a blowup on $v$, 

$\phantom{MOV}$ with the same ID.

\State $\phantom{MOVE IT}$ \textbf{Create} a deep copy $g'$ of the Graph $g$ and replace the copy of $chain$ 

$\phantom{MOV}$ with the new Chain that have been created.

\State $\phantom{MOVE IT}$ \textbf{Use binary search and the sorting methods} described above, 

$\phantom{MOV}$  to find the right place for the ID of the new Chain in the 

$\phantom{MOV}$ ChainsByStart and ChainsByEnd arrays and place it there 

$\phantom{MOV}$ (move other IDs to the right if needed to ``make place").

\State $\phantom{MOVE IT}$ \textbf{Add} $g'$ to $list$.
\State \textbf{Return} $list$
	\end{algorithmic}
\end{algorithm}

  \newpage

\begin{algorithm}[h]
\caption{ ---  \textsc{AddGraph}$(g,t)$   \newline
$\circ$  \textit{Input}:  A Graph $g$ and a Graph Tree structure $t$ \newline 
$\circ$  \textit{Output}:  The Graph $g$ would be added to the Tree structure $t$ if there is no equivalent Graph to $g$ in $t$}
\label{alg:AddGraph}
	\begin{algorithmic}[1]

\State \textbf{Get} the node from $t$ with the same key as the key of the Graph $g$.
\State \textbf{if} there is no such node in $t$ \textbf{then}

\State $\phantom{MO}$ \textbf{add} a new node to $t$ with the key of $g$. 

\textbf{Set} the list of Graphs of this node to contain $g$.

\State \textbf{else}

\State $\phantom{MO}$ \textbf{for} each Graph $g_2$ in the list of Graphs of the received node

\State $\phantom{MOVE}$ \textbf{if} $g$ is equivalent to $g_2$, (to be checked using 
\textsc{AreEquivalent}$(g,g_2)$), \textbf{then}

\State $\phantom{MOVE}$ \textbf{return}

\State $\phantom{MO}$ \textbf{add} $g$ to the list of Graphs in the node that matches to its key.

	\end{algorithmic}
\end{algorithm}

\begin{algorithm}[h]
\caption{ ---  \textsc{AreEquivalent}$(g_1,g_2)$  \newline
$\circ$  \textit{Input}:  A Graph $g_1$ and a Graph $g_2$ . Both $g_1$ and $g_2$ have the same key according to the Graphs Tree structure \newline 
$\circ$  \textit{Output}:  True, if the two Graphs are equivalent  and  False  otherwise}
\label{alg:AreEquivalent}
	\begin{algorithmic}[1]
\State   \textbf{if} $g_1$.bottomFatArea $=$ $g_1$.topFatArea

\State $\phantom{MO}$  \textbf{Return} \textsc{AreTheSame}$(g_1, g_2)$ or \textsc{AreReflection}$(g1,g2)$

\State \textbf{else}
\State $\phantom{MO}$  \textbf{if} ($g_1$.bottomFatArea=$g_2$.bottomFatArea)

\State $\phantom{MOVE}$  \textbf{Return} \textsc{AreTheSame}$(g_1,g_2)$

\State $\phantom{MO}$ \textbf{else }

\State $\phantom{MOVE}$ \textbf{Return} \textsc{AreReflection}$(g_1,g_2)$ 
	\end{algorithmic}
\end{algorithm}

\begin{algorithm}[h]
\caption{ ---  \textsc{AreTheSame}$(g_1,g_2)$  \newline
$\circ$  \textit{Input}:   A Graph $g_1$ and a Graph $g_2$. Both $g_1$ and $g_2$ have the same key according to the Graphs Tree structure. $g_1$ and $g_2$ fat vertices area are equals respectively. \newline 
$\circ$  \textit{Output}:  True, if the two Graphs are exactly the same; False otherwise.}
\label{alg:AreTheSame}
	\begin{algorithmic}[1]

\State \textbf{for} $i =0$ to the number of Chains in $g_1$ and $g_2$

\State $\phantom{MO}$ \textbf{if} not $g_1$.Chain[$g_1$.ChainsByStart[$i$]] equals $g_2$.Chain[$g_2$.ChainsByStart[$i$]]

\State $\phantom{MOVE}$ \textbf{Return} false

\State \textbf{Return} true
	\end{algorithmic}
\end{algorithm}

\begin{algorithm}[h]
\caption{ ---  \textsc{AreReflection}$(g_1,g_2)$  \newline
$\circ$  \textit{Input}:    Graph $g_1$ and a Graph $g_2$ . Both $g_1$ and $g_2$ have the same key according to the Graphs Tree structure. $g_1$ first fat area equals $g_2$ last fat area and vice versa. \newline 
$\circ$  \textit{Output}:  True, if the two Graphs are a reflection of one another; False otherwise.}
\label{alg:AreReflection}
	\begin{algorithmic}[1]
\State \textbf{for} $i =0$ to the number of Chains in $g_1$ and $g_2$

\State $\phantom{MO}$ \textbf{if not} $g_1$.Chain[$g_1$.ChainsByStart[$i$]] is a 

reflection of $g_2$.Chain[$g_2$.ChainsByEnd[$i$]] (check node by node, 

one Chain from the start and the other from the end, labels should be 

equal and a vertex should be equal to the height of the Graph minus 

the corresponding one.) \textbf{then}

\State $\phantom{MOVE}$ \textbf{Return} false
\State \textbf{Return} true
	\end{algorithmic}
\end{algorithm}

  \newpage

\end{document}